\newcommand*{\mint}[1]{%
  \mint@l{#1}{}%
}
\newcommand*{\mint@l}[2]{%
  \@ifnextchar\limits{%
    \mint@l{#1}%
  }{%
    \@ifnextchar\nolimits{%
      \mint@l{#1}%
    }{%
      \@ifnextchar\displaylimits{%
        \mint@l{#1}%
      }{%
        \mint@s{#2}{#1}%
      }%
    }%
  }%
}
\newcommand*{\mint@s}[2]{%
  \@ifnextchar_{%
    \mint@sub{#1}{#2}%
  }{%
    \@ifnextchar^{%
      \mint@sup{#1}{#2}%
    }{%
      \mint@{#1}{#2}{}{}%
    }%
  }%
}
\def\mint@sub#1#2_#3{%
  \@ifnextchar^{%
    \mint@sub@sup{#1}{#2}{#3}%
  }{%
    \mint@{#1}{#2}{#3}{}%
  }%
}
\def\mint@sup#1#2^#3{%
  \@ifnextchar_{%
    \mint@sup@sub{#1}{#2}{#3}%
  }{%
    \mint@{#1}{#2}{}{#3}%
  }%
}
\def\mint@sub@sup#1#2#3^#4{%
  \mint@{#1}{#2}{#3}{#4}%
}
\def\mint@sup@sub#1#2#3_#4{%
  \mint@{#1}{#2}{#4}{#3}%
}
\newcommand*{\mint@}[4]{%
  \mathop{}%
  \mkern-\thinmuskip
  \mathchoice{%
    \mint@@{#1}{#2}{#3}{#4}%
        \displaystyle\textstyle\scriptstyle
  }{%
    \mint@@{#1}{#2}{#3}{#4}%
        \textstyle\scriptstyle\scriptstyle
  }{%
    \mint@@{#1}{#2}{#3}{#4}%
        \scriptstyle\scriptscriptstyle\scriptscriptstyle
  }{%
    \mint@@{#1}{#2}{#3}{#4}%
        \scriptscriptstyle\scriptscriptstyle\scriptscriptstyle
  }%
  \mkern-\thinmuskip
  \int#1%
  \ifx\\#3\\\else_{#3}\fi
  \ifx\\#4\\\else^{#4}\fi
}
\newcommand*{\mint@@}[7]{%
  \begingroup
    \sbox0{$#5\int\m@th$}%
    \sbox2{$#5\int_{}\m@th$}%
    \dimen2=\wd0 %
    \let\mint@limits=#1\relax
    \ifx\mint@limits\relax
      \sbox4{$#5\int_{\kern1sp}^{\kern1sp}\m@th$}%
      \ifdim\wd4>\wd2 %
        \let\mint@limits=\nolimits
      \else
        \let\mint@limits=\limits
      \fi
    \fi
    \ifx\mint@limits\displaylimits
      \ifx#5\displaystyle
        \let\mint@limits=\limits
      \fi
    \fi
    \ifx\mint@limits\limits
      \sbox0{$#7#3\m@th$}%
      \sbox2{$#7#4\m@th$}%
      \ifdim\wd0>\dimen2 %
        \dimen2=\wd0 %
      \fi
      \ifdim\wd2>\dimen2 %
        \dimen2=\wd2 %
      \fi
    \fi
    \rlap{%
      $#5%
        \vcenter{%
          \hbox to\dimen2{%
            \hss
            $#6{#2}\m@th$%
            \hss
          }%
        }%
      $%
    }%
  \endgroup
}
\definecolor{myhighlightcolor}{rgb}{1,1,0}
\def\rr{{\mathbb R}}
\def\rn{{{\rr}^n}}
\def\nn{{\mathbb N}}
\def\fz{\infty}
\def\az{\alpha}
\def\loc{{\mathop\mathrm{\,loc\,}}}
\def\lz{\lambda}
\def\dz{\delta}
\def\bdz{\Delta}
\def\ez{\epsilon}
\def\bz{\beta}
\def\gz{{\gamma}}
\def\bint{{\ifinner\rlap{\bf\kern.35em--}
\int\else\rlap{\bf\kern.45em--}\int\fi}\ignorespaces}
\def\bbint{{\ifinner\rlap{\bf\kern.35em--}
\hspace{0.078cm}\int\else\rlap{\bf\kern.45em--}\int\fi}\ignorespaces}
\def\r{\right}
\def\lf{\left}
\newtheorem{thm}{Theorem}[section]
\newtheorem{lem}[thm]{Lemma}
\newtheorem{prop}[thm]{Proposition}
\newtheorem{rem}[thm]{Remark}
\newtheorem{defn}[thm]{Definition}
\numberwithin{equation}{section}
\title
{\Large\bf
A nonlinear Calder\'on-Zygmund $ L^2$-theory
for  the Dirichlet problem involving $ -|Du|^{\gamma}\Delta^N_p u=f$
{\footnotetext{
{\it  Mathematics Subject Classification}:35J25, 35J60, 35B65\endgraf
Q. Miao was supported by National Natural Science Foundation of China (Nos.12371199,12171031).
 F. Peng was supported by the Fundamental Research Funds for the
Central Universities.
Y. Zhou was supported by NSFC (No.12025102\&12431006) and by the Fundamental Research Funds for the Central Universities.
\endgraf}
}
}
\author{Qianyun Miao, Fa Peng and Yuan Zhou}
\begin{document}

\arraycolsep=1pt
\allowdisplaybreaks
 \maketitle

\begin{center}
\begin{minipage}{13.5cm}\small
 \noindent{\bf Abstract.}\quad
We establish a nonlinear Calder\'on-Zygmund $L^2$-theory
 to the Dirichlet problem
$$-|Du|^{\gamma}\Delta^N_p u=f\in L^2(\Omega)\quad {\rm in}\quad \Omega; \quad u=0 \ \mbox{on $\partial\Omega$} $$
for  $n\ge2$, $ p>1$ and  a  large range of  $\gamma>-1$,
 in particular, for all $p>1$ and  all $ \gamma>-1$ when $n=2$.
Here
  $\Omega\subset \mathbb{R}^n$   is a bounded convex domain, or a
  bounded  Lipschitz domain
 whose boundary has small  weak second fundamental form in the sense of Cianchi-Maz'ya (2018).

 \medskip
 \noindent \quad\
The proof relies on an extension of  an Miranda-Talenti \& Cianchi-Maz'ya type inequality,
that is,
for any $v\in C^\infty_0(\Omega)$ in any bounded smooth domain $\Omega$,
$\|D[(|Dv|^2+\epsilon)^{\frac\gamma 2}Dv]\|_{L^2(\Omega)}$ is bounded via
$\|(|Dv|^2+\epsilon)^{\frac\gamma 2} \Delta^N_{p,\epsilon}v  \|_{L^2(\Omega)}$,
where   $\Delta^N_{p,\epsilon}v$ is the $\epsilon$-regularization of  normalized $p$-Laplacian.

 \medskip
 \noindent \quad\
Our results extend the well-known Calder\'on-Zygmund $L^2$-estimate for the Poisson equation,
  a nonlinear global second order Sobolev estimate for inhomogeneous $p$-Laplace equation by  Cianchi-Maz'ya (2018),
 and a local  $W^{2,2}$-estimate for inhomogeneous normalized $p$-Laplace equation by Attouchi-Ruosteenoja (2018).

\end{minipage}
\end{center}

\tableofcontents
\section{Introduction}

For $ 1<p<\fz$,  denote by $\Delta_p$  the $p$-Laplacian  and by $\Delta^N_p$ its normalization, that is,
  $$\Delta_pu:={\rm div}(|Du|^{p-2}Du)\quad \mbox{and}\quad \Delta^N_pu:=|Du|^{2-p}\Delta_pu.
  $$
 In the case $p=2$,    both of them are the Laplacian $\Delta$, that is,
 $ \Delta u:={\rm div}(Du).$
For $\gz>-1$, let us consider the Dirichlet problem
  \begin{align}\label{plapgz}
-|Du|^\gamma \Delta^N_p u =f\quad {\rm in}\quad \Omega;\quad u=0 \quad\mbox{on $\partial \Omega$.}
\end{align}
If  $ f\in C^0(\Omega)$, we refer to Section 2 for the definition of viscosity solutions to the equation
$-|Du|^\gamma \Delta^N_p u =f$.
Under  $ f\in C^0(\Omega)\cap L^\infty(\Omega)$, the existence of viscosity solution $u\in C^0(\Omega)$ to \eqref{plapgz} was already known;
see Birindelli-Demengel \cite{bd09} and also Section 6.

 In this paper we deal with
  $f\in L^2(\Omega)$ and approximation solutions to the problem \eqref{plapgz}.
A function $u:\Omega\to\mathbb R$  is called as an approximation solution to the problem
\eqref{plapgz} with $f\in L^2(\Omega)$ if
  $u^\ez\to u$ in   $W^{1,q}_0(\Omega)$ for some $q>1$ as $\ez\to0$ (up to some subsequence),
and  $u^\ez$ is a  viscosity solution to the approximation problem
 \begin{equation}\label{ez-app0}
-|Dv|^\gamma \Delta^N_p v=f^\epsilon \ \mbox{in $\Omega$};\quad  v=0 \ \mbox{on $\partial\Omega$},
\end{equation}
where
 $f^{\ez}\in L^\infty(\Omega)\cap C^0(\Omega)$
 and $f^\ez\to f$ in $L^2(\Omega)$ as $ \ez\to0$.

The main purpose of this paper is to establish  the following
   nonlinear Calder\'on-Zygmund $L^2$-theory for the problem \eqref{plapgz},
   including the  existence of approximation (viscosity) solutions to
the problem \eqref{plapgz}, a global first order quantitative  Sobolev regularity,
 and  a   nonlinear global second order Sobolev  regularity.
 A triplet $ (n,p,\gamma)$ is admissible
  if $n\ge2$, $ p>1$ and $ \gz>\gamma_{n,p}$, where
     \begin{align}\label{pgz}
    \gz_{n,p}:=-1+(p-1)\frac{n-2}{2(n-1)},\quad \mbox{in particular,}\quad \gamma_{2,p}:=-1.
   \end{align}
 Moreover, if $\gz=0$, then the triplet $ (n,p,0)$ is admissible if and only
 if $1<p<3+\frac{2}{n-2}$.
   Write $2^\star=\frac{2n}{n-2}$ when $n\ge3$ and $2^\star=q$ for any sufficiently large $q<\infty$.
\begin{thm}\label{th-re1}
Suppose that the triplet $ (n,p,\gamma)$ is admissible
and  $\gamma>-\frac{4}{n+2}$.
 Let  $\Omega\subset\rn$ be  a bounded convex domain.
 For any  $f\in  L^2(\Omega)$,
  the problem  \eqref{plapgz} admits an approximation solution
   $ u\in W^{1,2}_0(\Omega)$, which satisfies
      \begin{equation}\label{thxx} \mbox{
$Du\in L^{(1+\gz)2^\star}(\Omega)$ and
$|Du|^{\gz}Du\in W^{1,2}(\Omega)$}
\end{equation}
and has the upper bound
\begin{align}\label{th1-1}
\|Du\|^{\gz+1}_{L^{(1+\gz)2^\star}(\Omega)}+
\|D[|Du|^{\gz}Du]\|_{L^2(\Omega)}\le C\|f \|_{L^2(\Omega)}
\end{align}
for some constant $C=C(p,n,\gz)$.
If  $f\in C^0(\Omega)$ in addition, then $u\in C^0(\Omega)$ is indeed a viscosity solution to  \eqref{plapgz}.
  \end{thm}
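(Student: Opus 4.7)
The plan is to combine an approximation scheme — regularizing simultaneously the datum $f$, the singular/degenerate operator $|Du|^\gz\Delta^N_p$, and the convex domain $\boz$ — with uniform a priori estimates derived from the Miranda-Talenti/Cianchi-Maz'ya type inequality promised in the abstract, and then pass to the limit. Specifically, I would choose $f^\ez\in C^0(\boz)\cap L^\fz(\boz)$ with $f^\ez\to f$ in $L^2(\boz)$, and approximate $\boz$ from inside by an increasing sequence of smooth bounded convex domains $\boz_\ez\nearrow\boz$. On each $\boz_\ez$, I consider the doubly regularized Dirichlet problem
\[
-(|Du|^2+\ez)^{\gz/2}\Delta^N_{p,\ez}u=f^\ez \ \text{in}\ \boz_\ez,\qquad u=0\ \text{on}\ \partial\boz_\ez,
\]
whose operator is uniformly elliptic and smooth for fixed $\ez>0$, so that standard quasilinear theory produces a classical solution $u^\ez\in C^{2,\alpha}(\boz_\ez)\cap C^0(\overline{\boz_\ez})$.

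Since $\boz_\ez$ is a smooth bounded convex domain, the Miranda-Talenti/Cianchi-Maz'ya type inequality announced in the abstract applies to $u^\ez$ (after the standard density approximation by $C^\fz_0(\boz_\ez)$ functions, justified by smoothness up to the boundary), yielding
\[
\|D[(|Du^\ez|^2+\ez)^{\gz/2}Du^\ez]\|_{L^2(\boz_\ez)}\le C\,\|f^\ez\|_{L^2(\boz)},
\]
with $C=C(n,p,\gz)$ uniform in $\ez$: convexity of $\boz_\ez$ makes the boundary contribution coming from the weak second fundamental form nonnegative, so the constant does not blow up along the approximation. Noting that $\bigl|(|Du^\ez|^2+\ez)^{\gz/2}Du^\ez\bigr|\simeq (|Du^\ez|^2+\ez)^{(1+\gz)/2}$ and using the Sobolev embedding $W^{1,2}\hookrightarrow L^{2^\star}$, this upgrades to the uniform bound
\[
\|Du^\ez\|^{1+\gz}_{L^{(1+\gz)2^\star}(\boz_\ez)}+\|D[(|Du^\ez|^2+\ez)^{\gz/2}Du^\ez]\|_{L^2(\boz_\ez)}\le C\,\|f^\ez\|_{L^2(\boz)}.
\]

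Extending $u^\ez$ by zero to $\boz$, the assumption $\gz>-\tfrac{4}{n+2}$ (pushing $(1+\gz)2^\star$ past the Sobolev-dual threshold $2n/(n+2)$) guarantees uniform boundedness in $W^{1,q}_0(\boz)$ for some $q>1$; a subsequence then converges weakly in $W^{1,(1+\gz)2^\star}(\boz)$, strongly in $L^q(\boz)$, and a.e.\ to some $u\in W^{1,2}_0(\boz)$. Lower semicontinuity of Sobolev norms transfers \eqref{thxx} and \eqref{th1-1} to the limit. Standard stability of viscosity solutions under locally uniform convergence, together with interior $C^{1,\alpha}$ estimates for the $\ez$-regularized equation, shows that as the operator-regularization is switched off the $u^\ez$ converge to a viscosity solution of the approximation problem \eqref{ez-app0}, so that $u$ is indeed the sought approximation solution of \eqref{plapgz}. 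When $f\in C^0(\boz)$, choosing $f^\ez\to f$ also locally uniformly and repeating the stability argument at the last step yields $u\in C^0(\boz)$ and identifies $u$ as a viscosity solution of \eqref{plapgz}.

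The main obstacle is the passage to the limit in the nonlinear operator when $\gz<0$: the factor $|Du|^\gz$ is singular on the critical set $\{Du=0\}$, so one needs pointwise a.e.\ convergence of $Du^\ez$ — not merely weak convergence — in order to identify the limit as a viscosity (approximation) solution and to see that the uniform estimates genuinely survive for $u$. I expect to extract this strong convergence from the uniform $W^{1,2}$ control of $(|Du^\ez|^2+\ez)^{\gz/2}Du^\ez$ via an Aubin-Lions type compactness argument, supplemented by a monotonicity trick (in the spirit of Boccardo-Murat) adapted to the normalized $p$-Laplacian.
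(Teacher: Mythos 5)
Your blueprint is broadly aligned with the paper's strategy (regularize the operator, use the a priori bound of Theorem \ref{thm-v} with uniform constants on convex domains, extract a weak limit via $W^{1,2}$ control of $(|Du^\ez|^2+\ez)^{\gz/2}Du^\ez$, and upgrade to strong convergence by a monotonicity argument built on the vector inequalities of Lemma \ref{in-vec}). But there is a genuine gap in how you tie the construction back to the paper's \emph{definition} of an approximation solution, and it stems from conflating two independent approximation parameters.

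Recall that the paper defines an approximation solution $u$ of \eqref{plapgz} as a $W^{1,q}_0$-limit of functions $u^\ez$ that are \emph{viscosity solutions of the unregularized equation} $-|Dv|^\gz\Delta^N_p v=f^\ez$ (that is, of \eqref{ez-app0}), not of the $\ez$-regularized operator. Your sequence $u^\ez$ instead solves the regularized equation $-(|Du|^2+\ez)^{\gz/2}\Delta^N_{p,\ez}u=f^\ez$ on $\Omega_\ez$, and you let the single parameter $\ez$ tend to zero simultaneously in the data, in the operator, and in the domain. The limit $u$ of this sequence is therefore not, as written, an approximation solution: the functions along the sequence are not viscosity solutions to \eqref{ez-app0}. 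Your final sentence (``as the operator-regularization is switched off the $u^\ez$ converge to a viscosity solution of the approximation problem \eqref{ez-app0}'') presupposes a two-parameter limit that your set-up does not keep track of — you cannot switch off the operator regularization while holding $f^\ez$ fixed, because they are tied to the same $\ez$. To repair this you must either introduce a second parameter $\delta$ for the operator regularization, pass $\delta\to0$ first (which requires the uniform H\"older bounds of Section 5--6 to invoke stability of viscosity solutions and to transfer the estimates), then send $\ez\to0$, or run an explicit diagonal argument. This is exactly why the paper proceeds in stages: Theorem \ref{thm-v} (smooth a priori estimate), then Lemma \ref{ex-so}/Theorem \ref{th-re0} (viscosity solutions and their estimates for $f\in C^0\cap L^\infty$), then Theorem \ref{th-re1}.

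Two smaller remarks. First, for proving Theorem \ref{th-re0} the paper introduces an additional $\lz u$ term in the regularized equation precisely because uniqueness for \eqref{plapgz} is open — the perturbation forces the regularized solutions to converge to the \emph{given} viscosity solution. You do not need this device for Theorem \ref{th-re1}, since you only need to exhibit \emph{some} approximation solution rather than recover a prescribed one; that is a legitimate simplification, but it is worth stating explicitly, since it is the reason you can bypass Theorem \ref{th-re0} at all. Second, the compactness device you want is Rellich--Kondrachov (stationary problem, no time variable), not Aubin--Lions; combined with the monotonicity inequalities of Lemma \ref{in-vec} this gives $Du^\ez\to Du$ in $L^{\gz+2}(\Omega)$ and identifies the weak limit of $(|Du^\ez|^2+\ez)^{\gz/2}Du^\ez$ in $W^{1,2}$ with $|Du|^\gz Du$, which is what lower semicontinuity alone would not give you. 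This is exactly the content of Lemma \ref{con-gr}~(ii) and Lemma \ref{conv-res-}, and the hypothesis $\gz>-\tfrac{4}{n+2}$ is needed there to ensure $\gz+2<(1+\gz)2^\star$ so the pairings in the monotonicity identity are integrable — not merely, as you say, to push $(1+\gz)2^\star$ past a Sobolev-dual threshold.
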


  Next we relax the convexity of domains required in Theorem \ref{th-re1} to bounded Lipschitz domains with small weak second fundamental form.
Given a bounded Lipschitz domain $ \Omega$, denote by $d_\Omega$ the diameter of $ \Omega$ and by $L_\Omega$  the Lipschitz constant of $\Omega$.
Assume  that   the boundary   $\partial\Omega\in W^2L^{n-1,\infty}$ when $n\ge 3$ and
 $\partial\Omega\in W^2L^{1,\infty}\log L$ when $n=2$.  Then
 $\partial\Omega$  enjoys   a weak second fundamental form $\mathcal{B}$ in $L^{n-1,\infty}(\partial\Omega)$ when $n\ge 3$ and
$L^{1,\infty}\log L(\partial\Omega)$ when $n=2$.
For $r>0$, define   \begin{equation*}
   \Psi_{\Omega}  (r)  :=\left\{\begin{array}{lc}\displaystyle
  \sup_{x\in\partial\Omega}\|\mathcal{B}\|_{L^{n-1,\infty}(\partial\Omega\cap B_r(x))}  \quad&\textrm{if}\quad n\geq 3,\\\displaystyle
   \sup_{x\in\partial\Omega}\|\mathcal{B}\|_{L^{1,\infty}\log L (\partial\Omega\cap B_{r}(x))} \quad&\textrm{if}\quad n=2.
\end{array}\right.
\end{equation*}
For more details see \cite{cm18}.
Below, when writing  $\Psi_\Omega$   we always make above assumption on $\partial\Omega$ without further specifying.


\begin{thm}\label{th-re1a}
Suppose that the triplet $ (n,p,\gamma)$ is admissible
and  $ -\frac{4}{n+2}< \gz\le p-2$.
 Let  $\Omega\subset\rn$ be  a bounded Lipchitz domain but not convex.

There exists   small positive constant $\dz=\dz(p,n,\gz,
L_{\Omega}, d_{\Omega})$ such that if
$\lim_{r\to 0^+}\Psi_{\Omega}(r)<\dz$, then
for any  $f\in  L^2(\Omega)$,
  the problem  \eqref{plapgz} admits an approximation solution
   $ u\in W^{1,2}_0(\Omega)$,
  which satisfies   \eqref{thxx} and \eqref{th1-1} with the constant $C=C(p,n,\gz,
L_{\Omega}, d_{\Omega})$.
  If  $f\in C^0(\Omega)$ in addition,  $u\in C^0(\Omega)$ is indeed a viscosity solution to  \eqref{plapgz}.
  \end{thm}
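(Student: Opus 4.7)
The overall strategy mirrors that of Theorem \ref{th-re1}. First fix $f^\ez\in C^0(\Omega)\cap L^\infty(\Omega)$ with $f^\ez\to f$ in $L^2(\Omega)$, use Birindelli-Demengel \cite{bd09} to produce viscosity solutions $u^\ez\in C^0(\Omega)\cap W^{1,2}_0(\Omega)$ of \eqref{ez-app0}, and derive the a priori bound
\[
\|Du^\ez\|_{L^{(1+\gz)2^\star}(\Omega)}^{\gz+1}+\|D[|Du^\ez|^\gz Du^\ez]\|_{L^2(\Omega)}\le C\|f^\ez\|_{L^2(\Omega)}
\]
uniformly in $\ez$. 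A subsequence then converges weakly in $W^{1,q}_0(\Omega)$ to an approximation solution $u$ satisfying \eqref{thxx} and \eqref{th1-1}; if moreover $f\in C^0(\Omega)$, the viscosity character of $u$ follows from $C^0$-stability of viscosity solutions under uniform convergence. Everything here goes through exactly as in Theorem \ref{th-re1}; the new difficulty is the a priori estimate itself when $\Omega$ is not convex.

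The technical core is a quantitative extension of the Miranda-Talenti--Cianchi-Maz'ya inequality (stated in the abstract for general bounded smooth $\Omega$ and $v\in C^\infty_0(\Omega)$) that keeps track of a boundary contribution. For a smooth domain $\wz\Omega$ and a regularization $v$ of $u^\ez$, integration by parts gives
\[
\|D[(|Dv|^2+\ez)^{\gz/2}Dv]\|_{L^2(\wz\Omega)}^2\le C\|(|Dv|^2+\ez)^{\gz/2}\Delta^N_{p,\ez}v\|_{L^2(\wz\Omega)}^2+I_{\pa\wz\Omega}(v),
\]
where $I_{\pa\wz\Omega}(v)$ is a boundary integral whose integrand pairs the (weak) second fundamental form $\cb$ of $\pa\wz\Omega$ with a quadratic form in $Dv|_{\pa\wz\Omega}$ weighted by $(|Dv|^2+\ez)^{\gz}$. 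When $\wz\Omega$ is convex, this term has a favourable sign and may be dropped, giving Theorem \ref{th-re1}. In general it must be estimated. The hypothesis $\gz\le p-2$ is used precisely here, ensuring that the bulk coefficients produced by the interaction between the weight $(|Dv|^2+\ez)^{\gz/2}$ and the operator $\Delta^N_{p,\ez}$ retain the ellipticity structure required for the above identity to be coercive on the second-order side.

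To transfer the inequality to the Lipschitz domain $\Omega$, I would approximate $\Omega$ by smooth subdomains $\Omega_m$ with $\sup_m \Psi_{\Omega_m}(r)\le 2\Psi_{\Omega}(r)$ for all small $r>0$, following Cianchi-Maz'ya \cite{cm18}. On each $\Omega_m$, estimate $|I_{\pa\Omega_m}(v)|$ via H\"older's inequality in the Lorentz space $L^{n-1,\infty}(\pa\Omega_m)$ (respectively $L^{1,\infty}\log L$ when $n=2$), pairing $\|\cb\|$ with a boundary trace of $(|Dv|^2+\ez)^{\gz}|Dv|^{2}$. A localized weighted trace inequality, tailored to the Lorentz scale, then converts this into
\[
C_1\,\Psi_{\Omega_m}(r)\,\|D[(|Dv|^2+\ez)^{\gz/2}Dv]\|_{L^2(\Omega_m)}^2 + \mbox{(lower-order terms)},
\]
the lower-order terms being absorbable by Young's inequality. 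Choosing $r$ depending on $L_\Omega,d_\Omega$ and then $\dz$ small enough so that $C_1\Psi_{\Omega_m}(r)<1/2$, absorption into the left-hand side gives the a priori estimate on $\Omega_m$ with constant $C(p,n,\gz,L_\Omega,d_\Omega)$. Passing $m\to\infty$ transfers it to $\Omega$. The concluding Sobolev--Gagliardo-Nirenberg step converting the second-order control into the first-order $L^{(1+\gz)2^\star}$-bound uses $\gz>-4/(n+2)$, exactly as in Theorem \ref{th-re1}.

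The principal obstacle is the calibration of these weighted trace and Lorentz-space estimates: one must isolate precisely the factor $\Psi_{\Omega_m}(r)$ multiplying the same second-order quantity appearing on the left, rather than a stronger norm, while keeping the argument uniform in $\ez$ and robust with respect to the degenerate or singular weight $(|Dv|^2+\ez)^{\gz/2}$. Once this quantitative boundary estimate is established, the two limits $m\to\infty$ and $\ez\to0$, as well as the passage to a viscosity solution when $f\in C^0(\Omega)$, follow along the same lines as in Theorem \ref{th-re1}.
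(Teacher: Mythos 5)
Your high-level strategy matches the paper's: regularize $f$, use Birindelli--Demengel for existence of viscosity solutions, prove a uniform a priori bound, and pass to the limit (Sections 6--8 of the paper, routed through Theorem~\ref{th-re0}).  However, the explanation you give for where the restriction $\gamma\le p-2$ enters is not correct, and it obscures the real obstruction.

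The second-order estimate (Lemma~\ref{sd-es}) and the absorption of the boundary term via the weighted trace inequality and the smallness of $\Psi_\Omega(r)$ work for \emph{every} admissible $\gamma$, convex or not; the coercivity on the second-order side is controlled entirely by the admissibility condition $\gamma>\gamma_{n,p}$ through Lemma~\ref{key-in}, not by $\gamma\le p-2$.  The constraint $\gamma\le p-2$ appears only at the next step, when one must bound the first-order quantity $\|Dv\|_{L^{2^\star(\gamma+1)}(\Omega)}$ (Lemma~\ref{Lq-es}).  For non-convex $\Omega$ the second-order estimate produces a lower-order term $\bigl(\int_\Omega(|Dv|^2+\ez)^{\frac{\gamma+1}{2}}\bigr)^2$ that must be controlled.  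When $\gamma\le p-2$ one rewrites $(|Dv|^2+\ez)^{\frac\gamma2}\Delta^N_{p,\ez}v$ as the regularized $p$-Laplacian divided by a weight, applies the $L^1$ a priori estimate of Lemma~\ref{L1-f}, and interpolates using $\gamma+1\le p-1$; this yields \eqref{lq-es3}, a bound involving only the right-hand side.  When $\gamma>p-2$ one instead uses the $(\gamma+2)$-Laplacian form, multiplies by $v$ and integrates, which inescapably introduces $\|v\|_{L^2(\Omega)}^{\gamma+1}$ (Eq.~\eqref{lq-es2} and \eqref{th1-2}).  Under merely $f\in L^2(\Omega)$ one cannot control $\|u^\ez\|_{L^2(\Omega)}$ uniformly in $\ez$; see Remark~\ref{rem7.4}.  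That, and not any failure of ellipticity in the boundary step, is why Theorem~\ref{th-re1a} excludes $\gamma>p-2$.  Your write-up should relocate the role of $\gamma\le p-2$ to this interpolation step and cite the unavailability of a uniform $L^2$-bound on $u^\ez$ as the actual obstruction for $\gamma>p-2$.
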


In the case $p=2$ and  $ \gamma>  -\min\{\frac{4}{n+2},\frac{n}{2n-2}\}$ but $\gz\ne0$,
Theorem \ref{th-re1}  actually gives a nonlinear  Calder\'on-Zygmund  $L^2$-theory to the Dirichlet zero boundary problem
for the equation   $|Du|^\gamma\Delta u=f\in L^2$, which is completely new.
If $\Omega$ is not convex, in the range $0> \gamma>  -\min\{\frac{4}{n+2},\frac{n}{2n-2}\}$, Theorem \ref{th-re1a} also gives a similar result.

  In the case $\gamma=0$ and $ 1<p<3+\frac2{n-2}$,  applying Theorem \ref{th-re1}  to   the Dirichlet zero boundary problem for the equation $ \Delta^N_p u =f$,
we  obtain a global
 $L^2$-estimate of Hessian $D^2u$, that is, $ \|D^2u\|_{L^2(\Omega)}\le C\|f\|_{L^2(\Omega)}$, which is also completely new. Also, in the range $ 2<p<3+\frac2{n-2}$, Theorem \ref{th-re1a}  shows a similar result when $\Omega$ is not convex.

The proofs  of Theorem \ref{th-re1} and Theorem \ref{th-re1a} rely on the following crucial
  a priori estimates, which has its own interests.
 For $ 1<p<\fz$ and $\ez\in(0,1)$,  denote by   $\Delta_{p,\ez}$
   the
    $\ez$-regularization of  $\Delta_p$, and by $\Delta^N_{p,\ez}$ the $\ez$-regularization
    of $\Delta^N_p$, that is,
    $$ \ \Delta_{p,\ez}u:={\rm div}((|Du|^2+\ez)^{\frac{p-2}2}Du), \quad
    \Delta^N_{p,\ez}u:=(|Du|^2+\ez)^{\frac{2-p}2}\Delta_{p,\ez}u.$$

\begin{thm}\label{thm-v}
 Suppose that  $ (n,p,\gamma)$ is admissible.
    Let  $ \Omega\subset\rn$ be a bounded smooth domain, and let
$v\in C^\fz(\overline \Omega)$ with $v=0$ on $\partial\Omega$.
For any $\ez\in (0,1)$, the following holds.

\begin{enumerate}
 \item [(i)] If $\Omega$ is convex, then
 \begin{align}\label{thx1-1}
&\|Dv\|^{\gz+1}_{L^{(1+\gz)2^\star}(\Omega)}+
\|D[(|Dv|^2+\ez)^{\frac\gz 2}Dv]\|_{L^2(\Omega)}\nonumber\\
&\quad\quad\le C\|(|Dv|^2+\ez)^{\frac\gz 2} \bdz^N_{p,\ez}v  \|_{L^2(\Omega)}+C\ez^{\frac {\gz+1}2},
\end{align}
where $C=C(p,n,\gz)$.

  \item [(ii)] Assume that $\Omega$ is not convex and that
$\lim_{r\to 0^+}\Psi_{\Omega}(r)<\dz$ for some sufficiently small positive constant $\dz=\dz(p,n,\gz,
L_{\Omega}, d_{\Omega})$.
\begin{itemize}
\item[$\bullet$]
If $\gz\le p-2$, then \eqref{thx1-1} holds with $C=C(p,n,\gz,
L_{\Omega}, d_{\Omega})$.

\item[$\bullet$]
If $\gz>p-2$, then
\begin{align}\label{th1-2}
&\|Dv\|^{\gz+1}_{L^{(1+\gz)2^\star}(\Omega)}+
\|D[(|Dv|^2+\ez)^{\frac\gz 2}Dv]\|_{L^2(\Omega)}\nonumber\\
&\quad \le C\|(|Dv|^2+\ez)^{\frac\gz 2} \bdz^N_{p,\ez}v  \|_{L^2(\Omega)}+ C  \|v\|^{\gz+1}_{L^2(\Omega)}+C\ez^{\frac {\gz+1}2}.
\end{align}
with $C=C(p,n,\gz,
L_{\Omega}, d_{\Omega})$.

\end{itemize}

\end{enumerate}

\end{thm}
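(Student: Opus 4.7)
The plan is to prove \eqref{thx1-1}--\eqref{th1-2} via a weighted Miranda--Talenti / Cianchi--Maz'ya integration-by-parts argument. Set $w:=(|Dv|^2+\ez)^{\gz/2}Dv$. A direct pointwise computation yields
$$
D_iw_j\;=\;(|Dv|^2+\ez)^{\gz/2}v_{ij}+\gz(|Dv|^2+\ez)^{\gz/2-1}(D^2v\,Dv)_iv_j,
$$
while
$$
(|Dv|^2+\ez)^{\gz/2}\bdz^N_{p,\ez}v\;=\;(|Dv|^2+\ez)^{\gz/2}\Delta v+(p-2)(|Dv|^2+\ez)^{\gz/2-1}\langle Dv,D^2v\,Dv\rangle.
$$
Both quantities are thus linear in $D^2v$ with coefficients depending on $Dv$, $\ez$, $n$, $p$, $\gz$; squaring and expanding produces an algebraic identity whose quadratic form in the Hessian can be analysed explicitly, which is the starting point.

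My first step would be to square the second identity, integrate against $dx$ over $\Omega$, and perform two successive integrations by parts---in the spirit of the classical Miranda--Talenti identity $\int(\Delta v)^2=\int|D^2v|^2+(\text{boundary})$---so as to transfer one Hessian factor onto the other. Combined with the pointwise formula for $Dw$, this should yield a master identity of the form
$$
\int_\Omega|Dw|^2\,dx\;=\;A_{n,p,\gz}\!\int_\Omega|(|Dv|^2+\ez)^{\gz/2}\bdz^N_{p,\ez}v|^2\,dx+\int_\Omega R_{n,p,\gz}(D^2v,Dv,\ez)\,dx+\int_{\partial\Omega}\!B\,d\sz,
$$
where $R_{n,p,\gz}$ is a quadratic form in $D^2v$ weighted by powers of $(|Dv|^2+\ez)$, and $B$ collects all boundary contributions. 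The crucial algebraic task is to verify that $R_{n,p,\gz}$ has the right sign---or at worst is controlled by a small constant times $|Dw|^2$---precisely when $(n,p,\gz)$ is admissible. After diagonalising in the orthonormal frame adapted to $\nu:=Dv/|Dv|$ and splitting the Hessian into its $\nu\nu$-entry and the remaining directions, I expect the least favourable eigenvalue of $R_{n,p,\gz}$ to vanish exactly at $\gz=\gz_{n,p}$ from \eqref{pgz}, which explains the admissibility threshold.

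The boundary analysis comes next. Since $v=0$ on $\partial\Omega$ the tangential gradient $\nabla_Tv$ vanishes there, so after the two integrations by parts $B$ reduces to a surface integral of the weak second fundamental form $\mathcal{B}$ against weighted powers of $\partial_\nu v$. In the convex case (i) one has $\mathcal{B}\ge 0$, the boundary integral has the favourable sign and can simply be discarded. In case (ii) I would invoke the Cianchi--Maz'ya trace/interpolation machinery associated with $\partial\Omega\in W^2L^{n-1,\infty}$ (respectively $W^2L^{1,\infty}\log L$ when $n=2$) to estimate $|\!\int_{\partial\Omega}B\,d\sz|$ by $\Psi_\Omega(r)\,\|Dw\|_{L^2(\Omega)}^2$ plus lower-order contributions controlled by $\|Dv\|_{L^2(\Omega)}$ and, when $\gz>p-2$, by $\|v\|_{L^2(\Omega)}^{\gz+1}$. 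Choosing $\dz$ small enough that $\lim_{r\to0^+}\Psi_\Omega(r)<\dz$ then permits absorption of the leading $\|Dw\|_{L^2}^2$ term into the left-hand side. Finally, because $|Dw|$ pointwise controls the gradient of the scalar $(|Dv|^2+\ez)^{(\gz+1)/2}$, the Sobolev embedding $W^{1,2}\hookrightarrow L^{2^\star}$ upgrades the $L^2$-bound on $Dw$ into the required $L^{(1+\gz)2^\star}$-bound on $Dv$, with the $\ez^{(\gz+1)/2}$ remainder arising when passing from $(|Dv|^2+\ez)^{(\gz+1)/2}$ back to $|Dv|^{\gz+1}$.

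The main obstacle will be the boundary step in the non-convex setting: the weighted Cianchi--Maz'ya trace inequality must be calibrated so that its constant depends only on $(p,n,\gz,L_\Omega,d_\Omega)$, and so that smallness of merely $\lim_{r\to 0^+}\Psi_\Omega(r)$---not of $\Psi_\Omega$ at any fixed scale---suffices to carry out the absorption. The dichotomy between \eqref{thx1-1} and \eqref{th1-2} at $\gz=p-2$ should trace back to this interpolation: when $\gz\le p-2$ the weight $(|Dv|^2+\ez)^{\gz/2}$ is dominated by the natural $p$-Laplacian weight and the residual low-order term disappears, whereas for $\gz>p-2$ the balance of exponents forces an unavoidable $\|v\|_{L^2(\Omega)}^{\gz+1}$ contribution that must appear on the right-hand side of \eqref{th1-2}.
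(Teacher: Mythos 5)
Your overall strategy—weighted Miranda--Talenti integration by parts, diagonalization of the Hessian quadratic form in the frame of $Dv$, a convexity/trace dichotomy for the boundary term—is indeed the approach the paper takes; the algebraic core is essentially the paper's Lemma \ref{key-in} (built on Lemmas \ref{fun-in} and \ref{key-in2}), and the weighted trace inequality (Lemma \ref{trace-sob}) handles the boundary exactly as you describe. Where your plan has a genuine gap is in the passage from the $L^2$-control of $Dw$ to the $L^{(1+\gz)2^\star}$-bound on $Dv$, and consequently in your explanation of the $\gz\lessgtr p-2$ dichotomy. The Sobolev--Poincar\'e inequality applied to $g:=(|Dv|^2+\ez)^{(\gz+1)/2}$ only gives $\|g-\overline g\|_{L^{2^\star}}\lesssim\|Dg\|_{L^2}$: the function $g$ is \emph{not} zero on $\partial\Omega$, so there is a zeroth-order term $\overline g\sim\|g\|_{L^1(\Omega)}$ that must be controlled independently, and this is the crux of the paper's Lemma \ref{Lq-es}, which you do not account for. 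The paper controls $\int_\Omega(|Dv|^2+\ez)^{(\gz+1)/2}$ by rewriting the operator in divergence form and invoking the Cianchi--Maz'ya $L^1$-estimate (Lemma \ref{L1-f}): in the convex case one uses the divergence of $(|Dv|^2+\ez)^{\gz/2}Dv$ (a $(\gz+2)$-Laplacian), in the non-convex case with $\gz\le p-2$ one uses the $p$-Laplacian form and a H\"older step which requires $2(p-2-\gz)\le(1+\gz)2^\star$ (this is precisely where the admissibility threshold $\gz>\gz_{n,p}$ enters on the $L^1$ side), and when $\gz>p-2$ that exponent inequality fails, so one instead tests the divergence-form equation against $v$ and integrates by parts---which is the true origin of the $\|v\|_{L^2(\Omega)}^{\gz+1}$ term in \eqref{th1-2}.

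A second, related misattribution: you suggest the $\|v\|_{L^2}^{\gz+1}$ contribution comes from the boundary/trace step in the non-convex case. In the paper, the non-convex boundary estimate (Lemma \ref{sd-es}(ii)) produces, for \emph{all} $\gz$, an extra term of the form $\bigl(\int_\Omega(|Dv|^2+\ez)^{(\gz+\beta+1)/2}\,dx\bigr)^2$ (after an interior Sobolev inequality and absorption); it is agnostic to the sign of $\gz-(p-2)$. The $\gz=p-2$ threshold only appears afterwards, when this lower-order $L^1$-type quantity must be closed as above. So the dichotomy lives entirely at the level of the $L^1$-control lemma, not the boundary-trace estimate. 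With this missing lemma supplied, your argument aligns with the paper's; without it, the $L^{(1+\gz)2^\star}$-bound on $Dv$ and the case distinction at $\gz=p-2$ cannot be justified.
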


Below are several remarks.
\begin{rem} \rm

(i) If $f\in L^2(\Omega)$ but $ f\notin C^0(\Omega)$, we do not know whether the approximation solutions in Theorems \ref{th-re1} and
 \ref{th-re1a} solve the problem \eqref{plapgz} in some sense.

 (ii)  The uniqueness of approximation (viscosity) solutions to
 the problem \eqref{plapgz} is unavailable in general.
 Indeed, under $f\in C^0(\Omega)\cap L^\infty(\Omega)$,
 if $f>0$ in $\Omega$ or $f<0$ in $\Omega$, then the problem \eqref{plapgz} with $ \gamma=0$ has unique solutions;
 if $f$ changes sign, it remains open for the uniqueness of viscosity solutions to
 the problem \eqref{plapgz} with $ \gamma=0$;
 see \cite{m23} for details.

(iii) Theorems \ref{th-re1a}  does not include the case $\gz>p-2$
 since it is unclear for us whether  $ \|u\|_{L^2(\Omega)}^{1+\gz}$ is finite a priori under $f\in L^2(\Omega)$ or even $f\in C^0(\Omega)\cap L^2(\Omega)$.
Recall that $ \|u\|_{L^2(\Omega)}^{1+\gz}$ appears in \eqref{th1-2} of Theorem \ref{thm-v}. For further discussion, see  Remark \ref{rem7.4}.

 However, under stronger assumption $f\in L^\infty(\Omega)$,
 the case $ \gz>p-2$  is allowed;  see Theorem \ref{th-re0} (ii) below.

(iv)  Note that  $\gz>\frac{n-4}2$ if and only if  $2^{\star}(\gz+1)>n$, which  allows to use
 Morrey-Sobolev embedding inequality to conclude that  approximation (viscosity) solutions
 in Theorems \ref{th-re1} and \ref{th-re1a} belongs to the global H\"older space $C^\alpha (\overline\Omega)$.

  When $ n=2$, we always have  $2^{\star}(\gz+1)>2.$    Since $\frac4{n+2}=1$,
the condition $ \gamma>-1$ implies $\gamma>-\frac4{n+2}$ required  in Theorem \ref{th-re1} .

  When    $n\ge3$,     $(1+\gz)2^\star\le n$ if and only if   $\gz\le \frac{n-4}2$, and $-\frac{4}{n+2}<\gz$ if and only if   $(1+\gz)2^\star>2$.
  This  allows to get the existence of approximation (viscosity) solution $u\in W^{1,2}_0(\Omega)$ in Theorem \ref{th-re1}.
    However, in the case  $n\ge3$,   if $-\frac4{n+2}\ge \gz>\gamma_{np}
    $, it is unclear for us
 whether we can remove
the additional restriction  $\gamma>-\frac4{n+2}$
in Theorems \ref{th-re0} and  \ref{th-re1}
or not.
This restriction comes only
from  Lemma \ref{con-gr} (ii) and it is  used to guarantee some Sobolev convergence of solutions to approximation equations.
 See Remark  \ref{zzzz1} for more details and  possible ways to remove the requirement $\gamma>-\frac4{n+2}$ in Lemma \ref{con-gr} (ii).

 \end{rem}

 \subsection{ Motivation and related studies in the literature}

 Theorem \ref{thm-v} is  motivated by the Miranda-Talenti formula (see \cite{t65})
  and a regular $p$-Laplace version  by Cianchi-Maz'ya (see \cite{cm18}).
  Miranda-Talenti formula  leads to  a Calder\'on-Zygmund $L^2$-estimate of the  Dirichlet problem for
   $  \Delta  u=f$, while
  Cianchi-Maz'ya (see \cite{cm18}) build up a nonlinear $L^2$-estimates for   the  Dirichlet problem of the equation
  $\Delta_pu =f$.
Moreover, Attouchi and Ruosteenoja \cite{ar18} obtained local $W^{2,2}_\loc$ regularity for viscosity solutions to
    \eqref{plapgz} under $ f\in C^0(\Omega)\cap W^{1,1}(\Omega)$ for some special $p$ and $\gz$ based on the Cordes approach (see \cite{mps00}) and the Miranda-Talenti formula.
 Theorems \ref{th-re1} and \ref{th-re1a}   actually extend or improve such second order regularity; for details see below.    Recall  from  \cite{bd10,apr17, ar18} that
  viscosity solutions to $|Du|^\gz \Delta_p^Nu=f\in C^0(\Omega)$
always have local $C^{1,\beta}$ regularity.

The  famous Miranda-Talenti inequality states that
 \begin{align}\label{poss1}
 \|D^2u\|_{L^2(\Omega)}\le \|\Delta u\|_{L^2(\Omega)}
\quad\forall u\in W^{1,2}_0(\Omega) \cap W^{2,2}(\Omega),
\end{align}
where $ \Omega$ is a bounded convex domain with $C^2$-boundary; see \cite{t65}.
The Miranda-Talenti inequality   leads to the Calder\'on-Zygmund $L^2$-estimate for the Possion equation, that is, given any $ f\in L^2(\Omega)$,   the  Dirichlet problem
 $$-\bdz u =f\quad \mbox{in}\ \Omega;\quad u=0\quad \mbox{on}
  \partial \Omega $$
admits a unique solution $ u\in W^{1,2}_0(\Omega)\cap W^{2,2}(\Omega)$
with
  \begin{align}\label{poss}
 \int_{\Omega}|D^2u|^2\,dx\le C\int_{\Omega}f^2\,dx.
 \end{align}This was first established by Bernstein   \cite{be04} when $n=2$ and by Schauder  \cite{sc34} when $n\ge 3$.
There has been efforts to extend the class of domains so that  \eqref{poss} holds.
The Miranda-Talenti inequality and the Calder\'on-Zygmund $L^2$-estimate
for Poisson equations were further generalized  to bounded Lipschitz domains,
which
 are  either convex domains or have smallness regularity on $\Omega$, but do not necessarily have $ C^2$-boundary; see \cite{cm18}.

Miranda-Talenti inequality was employed by Cordes \cite{mps00} to establish the  Calder\'on-Zygmund $L^2$-estimate  for   Dirichlet problem of   second order linear elliptic equation
 \begin{align}\label{LA}
L_A(u): ={\rm tr}\{ AD^2u\} =f\quad \mbox{in} \ \Omega; u=0\ \mbox{on} \  \partial \Omega,
  \end{align}
where ${\rm tr}(B)$ is the trace of $B$ with any $n\times n$ matrix  $B$.
 Here  the coefficient  $A:\Omega\to\mathbb R^{n\times n}
   $ is symmetric, measurable  satisfying the elliptic condition \begin{align}\label{elliptic}
   \lz |\xi|^2\le \langle    A\xi,\xi\rangle\le \Lambda |\xi|^2\quad\forall \xi\in\rn \mbox{ for some}\   0<\lz\le  \Lambda<\fz.
   \end{align}
   To be precise,
  if   $A$ further satisfies the Cordes condition (see \cite{mps00})
 \begin{align}\label{cordes}\sum_{i,j=1}^na_{ij}^2\le \frac1{n-1+\delta}\left(\sum_{i=1}^na_{ii}\right)^2\mbox{for some}\ \dz>0,
 \end{align}
 then, thanks to \eqref{poss1},    the Dirichlet  problem \eqref{LA}
 admits a unique (strong) solution $ u\in W^{1,2}_0(\Omega)\cap W^{2,2}(\Omega)$ satisfying
     \begin{align}\label{possLA}
     \|D^2u\|_{L^2(\Omega)}\le  \frac1{1-\sqrt{1-\dz}} \sup_{\Omega}\frac{{\rm tr}(A)  } {|A|^2}\|f\|_{L^2(\Omega)}.
\end{align}

In dimension $n=2,$ the elliptic condition always implies  the Cordes condition.
In dimension $n\ge 3$,  the Cordes condition  does not necessarily follow from the elliptic condition, and moreover,  it is sharp to get the solvability in $ W^{1,2}_0(\Omega)\cap W^{2,2}(\Omega)$ of  this problem.  For
 more details
  we refer to
  \cite{mps00} and the references therein.

 The above Calderon-Zygmund $L^2$-estimate   and approach  for the problem \eqref{LA} was adapted to study   the Dirichlet problem:
 $$ -\Delta_p^Nu=f \quad\mbox{in}\ \Omega;u=0\quad\mbox{on}\ \partial \Omega,$$
  where  $1<p<3+\frac2{n-2} $.
 Such restriction comes from the Cordes condition as observed by Manfredi-Weitsman \cite{mw88}; indeed,
  if $ Du\ne 0$, writing
  $$-\Delta_p^Nu:=-\bdz u-(p-2)\frac{\langle D^2uDu,Du\rangle}{|Du|^2} =-{\rm tr}\left\{\left[ I_n+\frac{Du\otimes Du}{|Du|^2}\right] D^2u\right\},
  $$
we see that the coefficient
$$
\left[ I_n+(p-2)\frac{Du\otimes Du}{|Du|^2}\right]
\ \mbox{ satisfies \eqref{cordes}
if and only if  }\  1<p<3+\frac2{n-2}.$$

Via  a local version of the Cordes condition,  if $1<p<3+2/(n-2)$ and $\gz= 0$,
 Attouchi and Ruosteenoja \cite{ar18}  demonstrated a  local $W^{2,2}$-regularity of
 viscosity solutions to  inhomogeneous normalized $p$-Laplace equation $ \Delta_p^Nu=f\in C^0(\Omega)$ in $ \Omega$.
  Moreover,  when $p$ close to 2, for some small
  $\gz$, they also obtained the local
  $W^{2,2}$ estimate for equation \eqref{plapgz} with $f\in W^{1,1}(\Omega)\cap C^0(\Omega)$.

Recently, Cianchi-Maz'ya established a global second order regularity  for the Dirichlet problem involving   $p$-Laplacian $\Delta_p$
 with   $1<p<\infty$ in the seminal paper \cite{cm18}:
\begin{align}\label{plap}
-\Delta_pu:=-{\rm div}(|Du|^{p-2}Du)=f\quad {\rm in}\quad \Omega;\quad u=0 \quad\mbox{on $ \partial \Omega$.}
\end{align}
  To be precise,   for any
 $ f\in L^2(\Omega)$, this
 problem admits a unique (generalized) weak solution $u$ with
its nonlinear gradient $ |Du|^{p-2}Du\in W^{1,2}(\Omega)$ and
 \begin{align}\label{plap-es}
  \||Du|^{p-2}Du\|_{W^{1,2}(\Omega)}\le C\|f\|_{L^2(\Omega)},
\end{align}
where  $\Omega$ is required to be  convex or  $\Psi_\Omega$ is required to be small.
 See  \cite{cm19,bcdm22, accfm} for further extension to quasilinear equations and  system.
 A core ingredient to obtain \eqref{plap-es}  is
 a nonlinear version   of Miranda-Talenti inequality  by   Cianchi-Maz'ya \cite{cm18}, that is,
  \begin{align}\label{insd-eseqcm}
  \||D^2v|(|Dv|^2+\ez)^{ \frac{p-2}2}\|_{L^2(\Omega)}\le C\|\Delta_{p,\ez}v\|_{L^2(\Omega)}
  +C\|(|D v |^2+\ez)^{\frac{p-1}2}\|_{L^1(\Omega)}
\end{align}

 Our a priori estimates given in Theorem \ref{thm-v}  extend  the Miranda-Talenti formula \eqref{poss1}
 and a regular $p$-Laplace version \eqref{insd-eseqcm} by Cianchi-Maz'ya \cite{cm18}.
 Theorem \ref{thm-v}  allows to get a
 nonlinear Calder\'on-Zygmund $L^2$-estimates for the  Dirichlet problem  \eqref{plapgz} with suitable $p,\gz$ in Theorems \ref{th-re1}-\ref{th-re1a}.
   This extends the classical  Calder\'on-Zygmund $L^2$-estimates
   for Poisson equation and also the $p$-Laplace  version \eqref{plap-es} by Cianchi-Maz'ya
   \cite{cm18}.
In particular, applying Theorems \ref{th-re1}-\ref{th-re1a} to the normalized $p$-Laplace equation $\Delta^N_pu=f$,
under $ f\in L^\fz(\Omega)\cap C^0(\Omega)$ we improve the above results by   Attouchi and Ruosteenoja \cite{ar18} from local  $L^2$ bound to global $L^2$ bound of Hessian.
Applying Theorems \ref{th-re1}-\ref{th-re1a} to $ |Du|^\gz\Delta u=f$, we  get the global second order estimates.
This contributes the  regularity study for the equation $ |Du|^\gamma\Delta_p^N u=f$, in particular,  $|Du|^\gamma \Delta u=f
$ and also $\Delta^N_pu=f$.

 \subsection { Sketch the ideas to prove  Theorems \ref{th-re1}-\ref{thm-v}}

We begin with the proof of  Theorem \ref{thm-v}. Firstly we establish  an algebraic inequality in
Lemma \ref{key-in}, which is   based on a fundamental inequality (see Lemma
\ref{fun-in}) and a differential inequality
by Cianchi-Maz'ya \cite[Lemma 3.1]{cm18}.  Next applying
Lemma \ref{key-in}, we are able to
 bound  $ \|(|Dv|^2+\ez)^{\frac{\gz+\beta} 2}|D^2v|\|_{L^2(\Omega)}$
 via  $ \|(\bdz^N_{p,\ez}v)(|Dv|^2+\ez)^{\frac{\gz+\beta} 2}Dv\|_{L^{2}(\Omega)}$ when
 $\Omega$ is convex, or when  $\Omega$ is not convex and $\gz<p-2$. When
 $\Omega$ is not convex and $\gz>p-2$, then $ \|(|Dv|^2+\ez)^{\frac{\gz+\beta} 2}|D^2v|\|_{L^2(\Omega)}$ are bounded by $ \|(\bdz^N_{p,\ez}v)(|Dv|^2+\ez)^{\frac{\gz+\beta} 2}Dv\|_{L^{2}(\Omega)}$ and the additional term
  $ \|(|Dv|^2+\ez)^{\frac{\gz+\beta+1} 2}Dv\|_{L^{2}(\Omega)}$; see Lemma \ref{sd-es}.
 To get Theorem \ref{thm-v},  thanks to   Lemma \ref{sd-es} with $\beta=0$,  it remains to  bound  $ \|Dv\|_{L^{2^\star(1+\gamma)}(\Omega)}.$
This will be done in Lemma \ref{Lq-es}. Indeed, if $\gz<p-2$, we bound
  $\|Dv\|_{L^{2^{\star}(\gz+1)}(\Omega)}$ by
 $\|(|Dv|^2+\ez)^{\gz/2}\bdz^N_{p,\ez}v\|_{L^2(\Omega)}$. However,
if $\gz>p-2$,    $\|Dv\|_{L^{2^{\star}(\gz+1)}(\Omega)}$ is
bounded by the $L^2$-norm of $(|Dv|^2+\ez)^{\gz/2}\bdz^N_{p,\ez}v$
and also the additional term $L^2$-norm of $v$. To see this,
we
rewrite the non-divergence operator $(|Dv|^2+\ez)^{\gz/2}\bdz^N_{p,\ez}v$ as
  $(\gz+2)$-Laplace operator when $\gz>p-2$ and
  $p$-Laplace operator when $\gz<p-2$.
Combining  this with
Lemma \ref{sd-es} we  conclude
  the Lemma \ref{Lq-es}  as desired.

Next we aim to deduce Theorems \ref{th-re1} and \ref{th-re1a} from Theorem \ref{thm-v}.
 To proceed, we required the following results.

\begin{thm}\label{th-re0}

Suppose that the triplet $ (n,p,\gamma)$ is admissible and $\gz>-\frac{4}{n+2}$.
Let
$\Omega$ be any bounded Lipschitz domain in $\rr^n$, and $u\in C^0(\overline \Omega)$ be  any viscosity solution
  to the problem \eqref{plapgz} with $f\in C^0(\Omega)\cap L^\fz(\Omega)$.
  The following holds.
\begin{enumerate}
  \item [(i)] If $\Omega$ is convex, then \eqref{thxx}  and \eqref{th1-1} hold.
.
  \item [(ii)] Assume that $\Omega$ is not convex and that
$\lim_{r\to 0^+}\Psi_{\Omega}(r)<\dz$ for some sufficiently small positive constant $\dz=\dz(p,n,\gz,
L_{\Omega}, d_{\Omega})$. Then \eqref{thxx} holds. Moreover,

\begin{itemize}
\item[$\bullet$]
If $\gz\le p-2$, then \eqref{th1-1} holds with the constant  $C=C(p,n,\gz,
L_{\Omega}, d_{\Omega})$.
\item[$\bullet$]
If $\gz>p-2$, then
\begin{align}\label{th1xx-2}
\|Du\|^{\gz+1}_{L^{(1+\gz)2^\star}(\Omega)}+
\|D[|Du|^{\gz}Du]\|_{L^2(\Omega)}\le C\|f \|_{L^2(\Omega)}+ C  \|u\|^{\gz+1}_{L^2(\Omega)}
\end{align}
with $C=C(p,n,\gz,
L_{\Omega}, d_{\Omega})$.
\end{itemize}
\end{enumerate}
\end{thm}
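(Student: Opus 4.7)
The strategy is to approximate the given viscosity solution $u$ by smooth solutions of the $\epsilon$-regularized equation, apply the a priori estimate of Theorem \ref{thm-v} to these approximations to get uniform bounds, and then pass to the limit. Specifically, fix a sequence $f^\epsilon\in C^\infty(\overline\Omega)$ with $f^\epsilon\to f$ in $L^2(\Omega)$ and $\|f^\epsilon\|_{L^\fz(\Omega)}\le \|f\|_{L^\fz(\Omega)}$. If $\Omega$ is only Lipschitz, one first approximates $\Omega$ from inside by bounded smooth domains $\Omega_j$ that preserve convexity in case (i), and that satisfy $\limsup_{j\to\fz}\lim_{r\to 0^+}\Psi_{\Omega_j}(r)<\dz$ together with $L_{\Omega_j}\le 2L_\Omega$ and $d_{\Omega_j}\le 2d_\Omega$ in case (ii); this kind of approximation is already used in \cite{cm18}.

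On each smooth domain, solve the regularized Dirichlet problem
\begin{equation*}
-(|Dv|^2+\ez)^{\gz/2}\bdz^N_{p,\ez}v=f^\ez\ \mbox{in $\Omega_j$};\quad v=0\ \mbox{on $\pa\Omega_j$}.
\end{equation*}
For each fixed $\ez>0$ this is a quasilinear, uniformly (but not necessarily divergence-form) elliptic equation with smooth data, so classical theory yields a smooth solution $v^{\ez,j}\in C^\fz(\overline{\Omega_j})$; this also corresponds, up to the scalar factor $(|Dv|^2+\ez)^{\gz/2}$, to a genuine uniformly elliptic equation for which existence in the strong/viscosity sense is standard. By the maximum principle (using $f^\ez\in L^\fz$), $\|v^{\ez,j}\|_{L^\fz}$ is bounded independently of $\ez$ and $j$.

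Next, apply Theorem \ref{thm-v} to $v^{\ez,j}$. In case (i), and in case (ii) with $\gz\le p-2$, this directly yields
\begin{equation*}
\|Dv^{\ez,j}\|^{\gz+1}_{L^{(1+\gz)2^\star}(\Omega_j)}+\|D[(|Dv^{\ez,j}|^2+\ez)^{\frac\gz2}Dv^{\ez,j}]\|_{L^2(\Omega_j)}\le C\|f^\ez\|_{L^2(\Omega)}+C\ez^{\frac{\gz+1}2},
\end{equation*}
while for $\gz>p-2$ the additional term $C\|v^{\ez,j}\|^{\gz+1}_{L^2}$ appears on the right, which is controlled using the uniform $L^\fz$ bound. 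These estimates are uniform in $\ez$ and $j$ (extending $v^{\ez,j}$ by $0$ outside $\Omega_j$). Consequently one can extract a subsequence $v^{\ez,j}\to \tilde u$ weakly so that $\tilde u\in W^{1,(1+\gz)2^\star}_0(\Omega)$, $|D\tilde u|^{\gz}D\tilde u\in W^{1,2}(\Omega)$, and $\tilde u$ inherits \eqref{thxx}, \eqref{th1-1}, and \eqref{th1xx-2}.

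The main obstacle is the identification $\tilde u=u$. Because $f^\ez\in C^0\cap L^\fz$ and each $v^{\ez,j}$ is in particular a viscosity solution of its approximation problem, the stability theory for viscosity solutions of \eqref{ez-app0} together with uniform H\"older estimates of Birindelli--Demengel \cite{bd09,bd10} and Attouchi--Parviainen--Ruosteenoja \cite{apr17,ar18} yields, up to subsequence, locally uniform convergence of $v^{\ez,j}$ to a viscosity solution $\tilde u\in C^0(\overline\Omega)$ of \eqref{plapgz} with the same $f$ and the same zero boundary condition. Since the hypothesis is that $u$ itself is a (arbitrary but fixed) viscosity solution, one then invokes the comparison/uniqueness for this particular limit procedure; concretely, the regularity gained by $\tilde u$ upgrades it to a strong solution on $\{D\tilde u\ne 0\}$, and a standard argument (e.g.\ using the sign-of-$f$ comparison of \cite{bd09}, or pairing the approximation with that of $u$) shows $\tilde u=u$, so $u$ satisfies the claimed estimates. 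The passage to the limit in $D[(|Dv^{\ez,j}|^2+\ez)^{\gz/2}Dv^{\ez,j}]\to D[|Du|^\gz Du]$ uses the uniform $L^2$-bound, pointwise a.e.\ convergence of gradients (from the $W^{1,(1+\gz)2^\star}$ compactness) and Vitali's theorem, which is the usual mechanism already appearing in \cite{cm18}.
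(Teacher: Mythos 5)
Your proposal captures the outer shell of the argument (regularize the equation, apply Theorem~\ref{thm-v}, pass to the limit), but it misses the one idea that makes the limit identification actually work, and your proposed repairs for that step do not close the gap.

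The issue is precisely where you say ``the main obstacle is the identification $\tilde u=u$.'' Your regularized problem $-(|Dv|^2+\ez)^{\gz/2}\bdz^N_{p,\ez}v=f^\ez$, $v=0$ on $\partial\Omega_j$, has no information about the particular viscosity solution $u$ you are trying to estimate: its limit $\tilde u$ is \emph{some} viscosity solution of \eqref{plapgz}, but the paper explicitly points out (Remark after Theorem~\ref{thm-v}) that uniqueness of viscosity solutions to \eqref{plapgz} with sign-changing $f$ is open even for $\gz=0$, so there is no reason $\tilde u$ should equal the given $u$. The repairs you suggest do not work: (a) the regularity gained by $\tilde u$ does not upgrade it to a classical solution globally, and even if it did, a strong solution is still only \emph{a} solution, not \emph{the} $u$; (b) the sign comparison principle of \cite{bd09} requires $f>0$ or $f<0$ throughout $\Omega$, which is not assumed; (c) ``pairing the approximation with that of $u$'' is exactly the missing mechanism, but you do not specify it.

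The paper resolves this by inserting a zeroth-order proper term into the approximate problem. It sets $g^\ez=\bar u\ast\eta_\ez$, $f^\ez=\bar f\ast\eta_\ez$ (with $\bar u,\bar f$ the zero extensions of $u,f$), fixes $\lz\in(0,1)$, and solves
\begin{equation*}
-(|Dv|^2+\ez)^{\gz/2}\bdz^N_{p,\ez}v+\lz v=\lz g^\ez+f^\ez\quad\mbox{in }\Omega_m,\qquad v=0\ \mbox{on }\partial\Omega_m.
\end{equation*}
Because of the strictly monotone $+\lz v$, this equation enjoys a comparison/uniqueness principle (Birindelli--Demengel), and its limit equation $-|Dv|^\gz\bdz^N_p v+\lz v=\lz u+f$ has the \emph{unique} viscosity solution $v=u$, since $u$ itself trivially satisfies it. This forces $u^{\ez,m}\to u$ and yields the needed identification. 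The cost is that the right-hand side in Theorem~\ref{thm-v} becomes $G_{\ez,m}=\lz(g^\ez-u^{\ez,m})+f^\ez$ rather than $f^\ez$, but since $g^\ez\to u$ and $u^{\ez,m}\to u$ one still recovers $\|f\|_{L^2(\Omega)}$ in the limit. Without the $\lz$-perturbation, your scheme proves the estimate only for \emph{some} viscosity solution, which is a different (and strictly weaker) statement than what Theorem~\ref{th-re0} asserts.
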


 It is standard to get  Theorems \ref{th-re0} by applying Theorem \ref{thm-v} to suitable approximation equations.
 To  guarantee that  an approximation argument works, we need an a priori global H\"older estimates when  $(1+\gamma)2^\star>n$,
 that is,  $ \|v\|_{C^\alpha(\overline \Omega)}$ is bounded via  $ \|(\bdz^N_{p,\ez}v)(|Dv|^2+\ez)^{\frac{\gz+\beta} 2}Dv\|_{L^{2}(\Omega)}$
 and also a local  H\"older estimates when  $(1+\gamma)2^\star\le n$, $ \|v\|_{C^\alpha(B)}$ is bounded
via $ \|(\bdz^N_{p,\ez}v)(|Dv|^2+\ez)^{\frac{\gz+\beta} 2}Dv\|_{L^{\infty}(2B)}$
and   $ \|(\bdz^N_{p,\ez}v)(|Dv|^2+\ez)^{\frac{\gz+\beta} 2}Dv\|_{L^{2}(\Omega)}$.
 See Section 5 for more details.
 Applying this to a standard regularized equation in smooth approximating sub-domains $\Omega_m$,
 under  $f \in C^0(  \Omega)\cap L^\fz(\Omega)$ we obtain    $ C^{0,\az}(\overline \Omega)$-regularity of
 viscosity solutions  to the Dirichlet problem \eqref{plapgz}; see Section 6 for details.

To get Theorem \ref{th-re0}, under  $f \in C^0(  \Omega)\cap L^\fz(\Omega)$, set $\bar f=f$ in $\Omega$ and $\bar f=0$ on
$\rr^n\backslash \Omega$. Also, set $\bar u=u$ in $\Omega$ and $\bar u=0$ on
$\rr^n\backslash \Omega$. Now we let $f^{\ez}=\bar f\ast \eta^{\ez}$ and $g^{\ez}=\bar u \ast \eta^{\ez}$ where
$\eta^{\ez}$ is standard mollifier.
Given any viscosity solution $u$ to \eqref{plapgz},
we work with  approximation equation used in Attouchi and Ruosteenoja \cite{ar18}:
\begin{align*}
\left\{
\begin{aligned}
-(|Du^{\ez,m}|^2+\ez)^{\frac \gz 2}
\bdz^N_{p,\ez}u^{\ez,m} +\lz u^{\ez,m}&=\lz g^\ez+ f^{\ez}
&\quad{\rm in}\ &\Omega_m\\
u^{\ez,m}&=0&\quad {\rm on}\ &\partial\Omega_m,
\end{aligned}
\right.
\end{align*}
where $\lz\in (0,1)$ and $\Omega_m$ is a smooth domain such that
$\lim_{m\to \fz}\Omega_m=\Omega$.
By applying Theorem \ref{thm-v}  to $ u^{\ez,m}$,
 thanks to the  global H\"older regularity in Section 5 and Section 6,
 we   conclude the convergence of $u^{\ez,m}$ to $u$ in several necessary function spaces,
 and then get the desired regularity \eqref{thxx} and  upper bound \eqref{th1-1}.
Note that,  since the uniqueness to \eqref{plapgz} is unavailable,
 $ \lambda>0$ is  necessary  to guarantee the convergence of $u^{\ez,m}$ to the given solution $u$.
 See section 7 for more details.

In Section 8, under  $f \in L^2(\Omega)$, we apply Theorems \ref{th-re0} to $ |Dv^{\ez}|^\gamma\Delta^N_pv^{\ez}=f\ast \eta_\epsilon$.
By verifying the desired convergence properties,
we obtain Theorem \ref{th-re1} and Theorem \ref{th-re1a}.
A key observation is that  all estimates in the approximation argument only depend on
  $\|f\ast \eta_\epsilon\|_{L^2(\Omega)}$, provided that $\Omega$ is convex or
  when $\Omega$ is not convex if $\gz<p-2$.
For a detailed discussion, see Section 8.

\section{Viscosity solutions}
Given a domain $ \Omega\subset\rn$, for any function $v\in C^2(\Omega)$,   denote by $\bdz v$ the Laplacian and
$\bdz_\fz v$ the $\fz$-Laplacian, that is,
$$\bdz v=\sum_{i=1}^nv_{x_ix_i},\quad
\bdz_\fz v=\sum_{1\le i,j\le n}v_{x_ix_j}v_{x_i}v_{x_j}.$$
The normalized $\fz$-Laplacian is given by
$\bdz^N_{\fz} v:=|Dv|^{-2}\bdz_\fz v$ if  $  Dv\ne0$.
For  $1<p<\fz$, the  normalized $p$-Laplacian  is defined as
$$\bdz^N_p v=(p-2)\bdz^N_{\fz}v+\bdz v.$$
 Their $ \ez$-regularization for $ \ez\in(0,1]$   are given by
$$\bdz^N_{\fz,\ez}v:=\frac{\bdz_\fz v}{|Dv|^2+\ez},\quad
\bdz^N_{p,\ez}v=(p-2)\bdz^N_{\fz,\ez}v+\bdz v $$

Recall the following definition of viscosity solutions introduced by Crandall-Ishii-Lions \cite{cil92}.

\begin{defn}\label{d1}

Let $ p>1$ and $\gz>-1$.  An upper
semicontinuous function $u$ is said to be a viscosity
subsolution   to
$-|Du|^\gz\Delta_p^Nu=f$ in $ \Omega$
provided that  at  any $ x_0\in\Omega$,   for any   $\phi\in C^2(\Omega)$ with  $u-\phi$ reaching its local maximum at point $x_0$ such that the following holds:

\begin{enumerate}
\item[(i)] if $ \gz>0$,  one has
\begin{align*}
&-|D\psi(x_0)|^{\gz}\bdz^N_p\psi(x_0)\le f(x_0).
\end{align*}
\item[(ii)]if $ \gz=0$,  one has
\begin{equation*}\lf\{\begin{array}{rcl}
&\displaystyle-\bdz^N_p \phi(x_0)\le f(x_0),&\quad {\rm if}\  D\phi(x_0)\neq 0,\\
&\displaystyle -(p-2)\max_{|\eta|=1}\langle D^2\phi(x_0)
\eta,\eta\rangle-\bdz \phi(x_0)\le f(x_0),&\quad {\rm if}\ D\phi(x_0)=0\
{\rm and}\ p\ge 2,\\
&\displaystyle-(p-2)\min_{|\eta|=1}\langle D^2\phi(x_0)
\eta,\eta\rangle-\bdz \phi(x_0)\le f(x_0),&\quad {\rm if}\ D\phi(x_0)=0\
{\rm and}\  1<p<2,
\end{array}\r.
\end{equation*}
\item[(iii)] if $ -1<\gz<0$ and $D\psi(x)\neq 0$ for
$x\neq x_0$, then
$$\lim_{r\to 0}\inf_{x\in B_r(x_0)\backslash\{x_0\}}[-|D\psi(x)|^{\gz}\bdz^N_p\psi(x)]\le f(x_0).$$
\end{enumerate}

  An lower
semicontinuous function $u$ is said to be a viscosity
supsolution   to the equation
$-|Du|^\gz\Delta_p^Nu=f$ in $ \Omega$
if $-u$ is its viscosity
subsolution.

A function $u\in C^0(\Omega)$ is a viscosity solution  to
$-|Du|^\gz\Delta_p^Nu=f$ in $ \Omega$  if it is both viscosity subsolution
and supersolution.
\end{defn}


We have the following remark in the case $-1<\gz<0$.
\begin{rem}\rm
  In the case $-1<\gz<0$,  the above  of viscosity subsolution to $-|Du|^\gz\Delta_p^Nu=f$ in $ \Omega$   is equivalent to the one proposed by Birindelli and Demengel
 \cite{bd04}, that is,
  at  any $ x_0\in\Omega$,   either $$-|D\psi(x_0)|^{\gz}\bdz^N_p\psi(x_0)\le f(x_0) $$
whenever  $\psi\in C^2(\Omega)$ such that $u-\psi$ attains its maximum at $x_0$ and $D\psi(x_0)\neq 0$,  or  $u=constant$   and $f\ge 0 $ in
  $B_{\dz}(x_0)$ for some $ \dz>0$.
  We refer
Attouchi-Ruosteenoja \cite{ar18} for more details.
\end{rem}

Finally, we   recall that the following inequalities for vectors
 (see for example Lindqvist \cite{lin19}), which will be used in Section 7 and Section 8.

\begin{lem}\label{in-vec}
Let $\gz>-1$. For any $a,b\in \rr^n$, then:
\begin{itemize}
\item if $-1<\gz<0$ and $\ez\in (0,1]$,
\begin{align}\label{in-vec1}
&\left((|a|^2+\ez)^{\frac{\gz}{2}}a-(|b|^2+\ez)^{\frac{\gz}{2}}b\right)\cdot(a-b)\ge
 (\gz+1)6^{\frac{\gz}{2}}(|a|^2+|b|^2+\ez)^{\frac{\gz}{2}}|a-b|^2.
\end{align}
In addiction, if $\ez=0$,
\begin{align}\label{in-vec2}
&\left(|a|^{\gz}a-|b|^{\gz}b\right)\cdot(a-b)\ge
 (\gz+1)6^{\frac{\gz}{2}}(|a|^2+|b|^2+1)^{\frac{\gz}{2}}|a-b|^2.
\end{align}
\item if $\gz\ge 0$ and $\ez\in (0,1]$,
\begin{align}\label{in-vec3}
&\left((|a|^2+\ez)^{\frac{\gz}{2}}a-(|b|^2+\ez)^{\frac{\gz}{2}}b
\right)\cdot(a-b)
\ge 2^{-2-\gz}(|a|^2+|b|^2+\ez)^{\frac{\gz}{2}}|a-b|^2,
\end{align}
In addition, if $\ez=0$,
\begin{align}\label{in-vec4}
&\left(|a|^{\gz}a-|b|^{\gz}b
\right)\cdot(a-b)
\ge 2^{-2-\gz}(|a|^2+|b|^2)^{\frac{\gz}{2}}|a-b|^2.
\end{align}
\end{itemize}
\end{lem}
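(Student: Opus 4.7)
The plan is to derive all four inequalities from a single representation obtained via the fundamental theorem of calculus, then split into the sign cases of $\gamma$ and handle the two elementary estimates on the line segment $[a,b]$. Concretely, set $F_\epsilon(x):=(|x|^2+\epsilon)^{\gamma/2}x$ for $\epsilon\in[0,1]$ and, for $a,b\in\rr^n$, let $x(t):=a+t(b-a)$, $t\in[0,1]$. A direct computation gives
\begin{align*}
DF_\epsilon(x)\xi=(|x|^2+\epsilon)^{\frac\gz2}\xi+\gz(|x|^2+\epsilon)^{\frac\gz2-1}(x\cdot\xi)x,
\end{align*}
so that, using $(x\cdot\xi)^2\le|x|^2|\xi|^2$,
\begin{align*}
\langle DF_\epsilon(x)\xi,\xi\rangle=(|x|^2+\epsilon)^{\frac\gz2-1}\bigl[(|x|^2+\epsilon)|\xi|^2+\gz(x\cdot\xi)^2\bigr].
\end{align*}
Integrating $DF_\epsilon(x(t))(b-a)$ against $b-a$ in $t\in[0,1]$ yields the basic identity
\begin{align*}
(F_\epsilon(b)-F_\epsilon(a))\cdot(b-a)=\int_0^1\langle DF_\epsilon(x(t))(b-a),b-a\rangle\,dt.
\end{align*}

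For $-1<\gz<0$, I would use $\gz(x\cdot\xi)^2\ge\gz|x|^2|\xi|^2$ (since $\gz<0$) to get $\langle DF_\epsilon(x)\xi,\xi\rangle\ge(|x|^2+\epsilon)^{\gz/2-1}\bigl[(1+\gz)|x|^2+\epsilon\bigr]|\xi|^2\ge(1+\gz)(|x|^2+\epsilon)^{\gz/2}|\xi|^2$, where the last step exploits $1+\gz\le1$ to absorb $\epsilon$ into the factor $1+\gz$. Since $|x(t)|\le|a|+|b|$, one has $|x(t)|^2+\epsilon\le 2(|a|^2+|b|^2)+\epsilon\le 6(|a|^2+|b|^2+\epsilon)$, and since $\gz/2<0$, this gives $(|x(t)|^2+\epsilon)^{\gz/2}\ge 6^{\gz/2}(|a|^2+|b|^2+\epsilon)^{\gz/2}$, which when inserted into the integral identity produces \eqref{in-vec1}. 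The case $\epsilon=0$ in \eqref{in-vec2} follows by applying the same argument with $\epsilon=1$ (permissible since $F_0$ and $F_1$ differ only in a harmless shift) or by direct inspection once $(|a|^2+|b|^2)$ is replaced by $(|a|^2+|b|^2+1)$.

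For $\gz\ge 0$, the second term in the bracket is nonnegative, so $\langle DF_\epsilon(x)\xi,\xi\rangle\ge(|x|^2+\epsilon)^{\gz/2}|\xi|^2$. The remaining task is the lower bound
\begin{align*}
\int_0^1(|x(t)|^2+\epsilon)^{\frac\gz2}\,dt\ge 2^{-2-\gz}(|a|^2+|b|^2+\epsilon)^{\frac\gz2},
\end{align*}
which I would obtain by the standard sub-interval trick: writing $M:=\max(|a|,|b|)$ and assuming WLOG $M=|b|$, on $t\in[3/4,1]$ one has $|x(t)|\ge t|b|-(1-t)|a|\ge M/2$, while $M^2+\epsilon\ge\tfrac12(|a|^2+|b|^2+\epsilon)$. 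Combining these two bounds on an interval of length $1/4$ yields the claim with the stated constant, and \eqref{in-vec3}, \eqref{in-vec4} follow at once.

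The main obstacle is essentially bookkeeping of the constants $6^{\gz/2}$ and $2^{-2-\gz}$: the structural monotonicity of $F_\epsilon$ is entirely standard, but tracking the dependence on $\epsilon$ when absorbing it into $(|a|^2+|b|^2+\epsilon)$ (in the $\gz<0$ case) and choosing the right sub-interval (in the $\gz\ge 0$ case) requires care to avoid losing the factor $1+\gz$, which would be catastrophic for subsequent applications in Sections 7 and 8 where $\gz$ may be arbitrarily close to $-1$. Since the statement is quoted from Lindqvist \cite{lin19}, a clean option is simply to reference that source, but the sketch above gives a self-contained argument.
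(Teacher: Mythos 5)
The paper does not prove Lemma~\ref{in-vec}; it simply cites Lindqvist~\cite{lin19}, so you are not deviating from a given argument but supplying one. Your overall strategy — monotonicity of $F_\ez(x)=(|x|^2+\ez)^{\gz/2}x$ via the fundamental theorem of calculus along $x(t)=a+t(b-a)$ — is the natural route, and the $-1<\gz<0$ case is correct: the pointwise bound $\langle DF_\ez(x)\xi,\xi\rangle\ge(1+\gz)(|x|^2+\ez)^{\gz/2}|\xi|^2$ uses $\ez\ge(1+\gz)\ez$ (valid since $\gz\le0$), and $|x(t)|^2+\ez\le 2(|a|^2+|b|^2)+\ez\le 6(|a|^2+|b|^2+\ez)$ gives \eqref{in-vec1}, while $|x(t)|^2\le 2(|a|^2+|b|^2)\le 6(|a|^2+|b|^2+1)$ gives \eqref{in-vec2}. (Drop the remark that $F_0$ and $F_1$ ``differ only in a harmless shift'' — they are not related by a shift; the direct inspection you also offer is the one that works.)

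Your $\gz\ge0$ case, however, does not yield the stated constant. On $[3/4,1]$ you have $|x(t)|^2+\ez\ge M^2/4+\ez\ge \tfrac14(M^2+\ez)\ge \tfrac18(|a|^2+|b|^2+\ez)$; raising to the power $\gz/2\ge0$ and integrating over a length-$1/4$ interval produces the factor $\tfrac14\cdot 8^{-\gz/2}=2^{-2-3\gz/2}$, which is strictly \emph{smaller} than $2^{-2-\gz}$ whenever $\gz>0$, so your claim that the ``sub-interval trick yields the stated constant'' is an arithmetic slip. To recover $2^{-2-\gz}$, integrate over all of $[1/2,1]$ and keep the exact lower bound $|x(t)|\ge(2t-1)M$: since $(2t-1)^2\le1$, one has $|x(t)|^2+\ez\ge(2t-1)^2(M^2+\ez)$, hence
$\int_{1/2}^1(|x(t)|^2+\ez)^{\gz/2}\,dt\ge(M^2+\ez)^{\gz/2}\int_{1/2}^1(2t-1)^{\gz}\,dt=\frac{(M^2+\ez)^{\gz/2}}{2(\gz+1)}\ge\frac{2^{-\gz/2}}{2(\gz+1)}(|a|^2+|b|^2+\ez)^{\gz/2}$,
and the desired inequality reduces to $2^{1+\gz/2}\ge\gz+1$, which holds for all $\gz\ge0$ (true at $\gz=0$, convex with positive minimum near $\gz\approx1.06$). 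The same correction applies to \eqref{in-vec4}. For the record, the paper's applications in Sections~7 and~8 only invoke some positive $C(\gz)$, so the discrepancy is harmless downstream — but your proposal as written overstates what the $[3/4,1]$ computation produces.
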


\section{A key inequality  involving  the  operator $\bdz^N_{p,\ez}$}

Assume for this section that $\Omega\subset \rr^n$ is smooth bounded domain and that
$v\in C^\fz(\overline \Omega)$. We  establish the following algebraic inequality,  which is a generalization of
a differential inequality in \cite[Lemma 3.1]{cm18}.
\begin{lem}\label{key-in}
Let  $ p>1$ and let $\gz>-1$ satisfy \eqref{pgz}.
For all $\ez\in (0,1)$, it holds that
\begin{align}\label{key-ineq}
 C_1 |D^2 v |^2(|D v |^2+\ez)^{\gz}
& \le   C_2(\bdz^N_{p,\ez} v )^2(|D v |^2+\ez)^{  \gz }\nonumber \\
&\quad\quad
+
{\rm div}\left\{(|D v |^2+\ez)^{\gz}(D^2 v D v -\bdz  v D v )\right\}\ \mbox{
a.e. in $\Omega$,}
\end{align}
where $C_1=C_1(p,n,\gz) $ and $C_2=C_2(p,n,\gz) $ are positive  constants.
\end{lem}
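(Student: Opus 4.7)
The plan is to prove \eqref{key-ineq} by a direct pointwise calculation. Set $\rho := (|Dv|^2+\epsilon)^\gamma$ and $H := D^2v\,Dv$. A first step is the divergence identity
\begin{align*}
\mathrm{div}\bigl\{\rho(D^2v\,Dv - \Delta v\,Dv)\bigr\} = \rho\bigl[|D^2v|^2 - (\Delta v)^2\bigr] + \frac{2\gamma\rho}{|Dv|^2+\epsilon}\bigl[|H|^2 - \Delta v\,\Delta_\infty v\bigr],
\end{align*}
obtained from the elementary identities $\mathrm{div}(D^2v\,Dv) = Dv\cdot D(\Delta v) + |D^2v|^2$, $\mathrm{div}(\Delta v\,Dv) = Dv\cdot D(\Delta v) + (\Delta v)^2$, and the chain rule $D\rho = 2\gamma\rho H/(|Dv|^2+\epsilon)$. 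Rearranging expresses $\rho|D^2v|^2$ as $\rho(\Delta v)^2$ plus two ``geometric'' lower-order terms plus the divergence that already appears in \eqref{key-ineq}.

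Next I would substitute $\Delta v = \bdz^N_{p,\ez}v - (p-2)\Delta_\infty v/(|Dv|^2+\epsilon)$ into both $(\Delta v)^2$ and $\Delta v\,\Delta_\infty v$. After expansion, the right-hand side decomposes as a linear combination of $\rho(\bdz^N_{p,\ez}v)^2$, a cross term $\rho\,\bdz^N_{p,\ez}v\,\Delta_\infty v/(|Dv|^2+\epsilon)$, and two ``geometric'' remainders $\rho|H|^2/(|Dv|^2+\epsilon)$ and $\rho(\Delta_\infty v)^2/(|Dv|^2+\epsilon)^2$. Young's inequality applied to the cross term absorbs a small portion into $\rho(\bdz^N_{p,\ez}v)^2$ at the cost of enlarging the coefficients of the two geometric remainders. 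The resulting intermediate estimate takes the schematic form
\begin{align*}
\rho|D^2v|^2 \le C_2\,\rho(\bdz^N_{p,\ez}v)^2 + A\,\frac{\rho|H|^2}{|Dv|^2+\epsilon} + B\,\frac{\rho(\Delta_\infty v)^2}{(|Dv|^2+\epsilon)^2} + \mathrm{div}\{\cdots\},
\end{align*}
with constants $A,B$ depending explicitly on $p,\gamma$ and the Young parameter.

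The decisive step is to absorb these two geometric terms into the left-hand side with a combined coefficient strictly less than $1$, leaving a positive $C_1$. This is exactly where the fundamental algebraic inequality Lemma \ref{fun-in} enters. Working in an orthonormal basis whose first vector is $Dv/|Dv|$, the ratios $|H|^2/(|Dv|^2+\epsilon)$ and $(\Delta_\infty v)^2/(|Dv|^2+\epsilon)^2$ only see the first-row Hessian entries $v_{1j}$, while $|D^2v|^2$ also contains the tangential block $(v_{ij})_{i,j\ge 2}$, leaving room for absorption. A Cordes-type optimization in the spirit of Cianchi--Maz'ya \cite[Lemma 3.1]{cm18} then produces a bound of the form $A|H|^2/(|Dv|^2+\epsilon) + B(\Delta_\infty v)^2/(|Dv|^2+\epsilon)^2 \le \theta|D^2v|^2$ with $\theta<1$, and the critical threshold for $\theta<1$ is precisely $\gamma > \gamma_{n,p}$. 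The main obstacle is obtaining this sharp threshold: crude bounds such as $|H|^2 \le |D^2v|^2(|Dv|^2+\epsilon)$ and $(\Delta_\infty v)^2\le|D^2v|^2(|Dv|^2+\epsilon)^2$ yield only a suboptimal range for $\gamma$, and it is the algebraic sharpness of Lemma \ref{fun-in}, coupled with an optimal Young parameter, that reproduces $\gamma_{n,p} = -1+(p-1)(n-2)/(2(n-1))$ as the correct cutoff.
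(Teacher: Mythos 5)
Your outline tracks the paper's proof closely: the paper starts from the Cianchi--Maz'ya differential identity (\cite[Lemma 3.1]{cm18}), which you propose to re-derive via the same $\mathrm{div}(\rho(D^2v\,Dv-\Delta v\,Dv))$ computation; it then substitutes $\Delta v = \bdz^N_{p,\ez}v - (p-2)\bdz^N_{\fz,\ez}v$, uses Young's inequality on the cross term, and invokes Lemma \ref{fun-in} inside Lemma \ref{key-in2} to reach the sharp threshold $\gz>\gz_{n,p}$. So the decomposition, the cross-term absorption, and the role of Lemma \ref{fun-in} are exactly as in the paper.

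Where your sketch is imprecise is the final absorption step. You assert a pointwise bound of the form $A\,|H|^2/(|Dv|^2+\ez) + B\,(\Delta_\infty v)^2/(|Dv|^2+\ez)^2 \le \theta\,|D^2v|^2$ with $\theta<1$ as the output of Lemma \ref{fun-in} plus optimization. This is structurally off: Lemma \ref{fun-in} is an inequality for $|D^2v|^2 - 2|D|Dv||^2 + (\bdz^N_\fz v)^2$ involving $(\bdz v - \bdz^N_\fz v)^2$, i.e.\ it couples the geometric quantities to $\Delta v$, which is not purely ``geometric'' and must be re-expressed via $\bdz^N_{p,\ez}v$. The paper therefore argues in two stages rather than by a single $\theta<1$ bound: it first uses the identity together with Lemma \ref{key-in2} to control $\rho(\bdz^N_{\fz,\ez}v)^2$ in terms of $\rho(\bdz^N_{p,\ez}v)^2$, a divergence, and a small multiple of $\rho|D^2v|^2$; it then returns to the identity with $\dz=1$, plugs this estimate back in, and finishes by the separate algebraic lower bound $|D^2v|^2 + 2\gz|H|^2/(|Dv|^2+\ez) + \gz^2(\bdz^N_{\fz,\ez}v)^2 \ge \min\{1,(\gz+1)^2\}|D^2v|^2$ from \cite[Theorem 2.1]{bcdm22}. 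This last inequality is a genuine second algebraic ingredient you do not mention, and it is what rescues the case $\gz$ negative and close to $-1$, where your crude-bound heuristic ($|H|^2/(\cdot)\le |D^2v|^2$) forces $-2\gz<1$, i.e.\ $\gz>-1/2$, strictly worse than the admissible range. So the plan is the right one, but carrying it out requires the two-stage structure and the bcdm22 bound rather than a single Cordes-style $\theta<1$ inequality.
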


To prove  Lemma \ref{key-in}, we need  the following lemma; see \cite{ss22} by Sarsa.
For the sake of completeness, we give its proof here.
\begin{lem}\label{fun-in}
 It holds that
\begin{align}\label{fin}
|D^2v|^2-2|D|Dv||^2+(\bdz^N_{\fz}v)^2\ge \frac 1{n-1}(\bdz v-\bdz^N_{\fz}v)^2
\quad {\rm in}\quad \Omega\backslash\{Dv\neq 0\}.
\end{align}
\end{lem}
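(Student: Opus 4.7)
The plan is to prove this inequality pointwise, at each $x_0 \in \Omega$ with $Dv(x_0) \neq 0$, by a judicious choice of orthonormal frame that decouples the direction of $Dv$ from the tangential directions.

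First, I fix $x_0$ and choose an orthonormal basis $\{e_1,\ldots,e_n\}$ of $\mathbb{R}^n$ with $e_1 = Dv(x_0)/|Dv(x_0)|$. Writing $v_{ij}$ for the components of $D^2v(x_0)$ in this frame and using that $v_i(x_0) = |Dv(x_0)|\,\delta_{1i}$, I compute
\[
\bdz^N_\fz v(x_0) = \frac{\sum_{i,j} v_{ij}v_iv_j}{|Dv|^2}\bigg|_{x_0} = v_{11},\quad \pa_j|Dv|(x_0) = \frac{\sum_k v_k v_{kj}}{|Dv|}\bigg|_{x_0} = v_{1j},
\]
so that $|D|Dv|(x_0)|^2 = \sum_{j=1}^n v_{1j}^2$ and $\bdz v(x_0) - \bdz^N_\fz v(x_0) = \sum_{i=2}^n v_{ii}$.

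Second, I substitute into the left-hand side of \eqref{fin} and split $|D^2v|^2 = v_{11}^2 + 2\sum_{j\ge 2}v_{1j}^2 + \sum_{i,j\ge 2}v_{ij}^2$. The mixed terms $v_{11}^2$ and $2\sum_{j\ge 2}v_{1j}^2$ cancel exactly against the terms $-2|D|Dv||^2 + (\bdz^N_\fz v)^2$, leaving
\[
|D^2v|^2 - 2|D|Dv||^2 + (\bdz^N_\fz v)^2 = \sum_{i,j\ge 2} v_{ij}^2 \ \text{ at } x_0.
\]
This reduction is the main content; after this the problem is purely algebraic on the $(n-1)\times(n-1)$ tangential Hessian block.

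Finally, I apply Cauchy--Schwarz to the trace of the tangential block: dropping off-diagonal terms gives $\sum_{i,j\ge 2}v_{ij}^2 \ge \sum_{i\ge 2} v_{ii}^2$, and then
\[
\sum_{i\ge 2} v_{ii}^2 \ge \frac{1}{n-1}\bigg(\sum_{i\ge 2}v_{ii}\bigg)^2 = \frac{1}{n-1}(\bdz v - \bdz^N_\fz v)^2,
\]
which yields \eqref{fin}. There is no real obstacle here; the only subtle point is verifying the algebraic cancellation in the second step, which is precisely what makes the orthonormal-frame choice aligned with $Dv$ the right idea.
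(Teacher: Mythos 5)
Your proof is correct and takes essentially the same route as the paper's: align an orthonormal frame with $Dv(x_0)$, decompose the Hessian Frobenius norm, drop the tangential off-diagonal entries, and apply Cauchy--Schwarz to the tangential trace. The only cosmetic difference is that you first observe the exact identity $|D^2v|^2-2|D|Dv||^2+(\bdz^N_\fz v)^2=\sum_{i,j\ge 2}v_{ij}^2$ before relaxing to an inequality, whereas the paper folds the off-diagonal drop into the Hessian decomposition at the start; the content is identical.
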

\begin{proof}
Assume that $Dv(x_0)\neq 0$ for $x_0\in \Omega$.  Up to considering
$\frac{Dv(x_0)}{|Dv(x_0)|}$, we may assume   $|Dv(x_0)|=1$.
Up to some rotation,
we further assume that $Dv(x_0)=(0,...,0,1)$.
 At point
$x_0$, one then has
\begin{align}\label{fun-eq1}
|D|Dv||^2=\sum_{i=1}^nv_{x_ix_n}^2,\quad \bdz^N_\fz v=v_{x_nx_n}.
\end{align}

Observe that
\begin{align*}
|D^2v|^2&=\sum_{i=1}^nv_{x_ix_i}^2+2\sum_{1\le i<j\le n}v_{x_ix_j}^2\ge -v_{x_nx_n}^2+2\sum_{i=1}^{n}v_{x_ix_n}^2+\sum_{i=1}^{n-1}v_{x_ix_i}^2.
\end{align*}
Via the Cauchy-Schwartz inequality  we obtain
\begin{align*}
|D^2v|^2&\ge -v_{x_nx_n}^2+2\sum_{i=1}^nv_{x_ix_n}^2+\frac 1{n-1}
\left(\sum_{i=1}^{n-1}v_{x_ix_i}\right)^2\\
&=-v_{x_nx_n}^2+2\sum_{i=1}^nv_{x_ix_n}^2+\frac 1{n-1}
\left(\bdz v-v_{x_nx_n}\right)^2.
\end{align*}
From this and  \eqref{fun-eq1} we conclude \eqref{fin}  as desired.
\end{proof}

 Using the above lemma, we have the following structural inequality.
\begin{lem}\label{key-in2}
Let $\ez\in (0,1)$ and $\eta>0$.
Then, it holds that
\begin{align}\label{in2-eq}
&|D^2 v |^2+2\gz\frac{|D^2 v D v |^2}{|D v |^2
+\ez}+[\gz^2-(\gz-p+2)^2](\bdz^N_{\fz,\ez} v )^2
\nonumber \\
&\ge2(p-1)\left[\gz+1-(p-1)\frac{n-2}{2(n-1)}\right]
(\bdz^N_{\fz,\ez}  v )^2-\frac{C(p)}{\eta}(\bdz^N_{p,\ez} v )^2
-\eta|D^2 v |^ 2
\end{align}
almost everywhere in $\Omega$. 
\end{lem}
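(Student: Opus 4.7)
The plan is that Lemma \ref{key-in2} is a pointwise algebraic inequality that should be verified at almost every $x_0\in\Omega$. At points where $Dv(x_0)=0$ all $\ez$-regularized normalized quantities vanish and the claim collapses to $(1+\eta)|D^2v|^2\ge -C(p)(\bdz v)^2/\eta$, which follows from the Cauchy--Schwarz bound $|D^2v|^2\ge(\bdz v)^2/n$. I therefore restrict attention to $x_0$ with $Dv(x_0)\ne 0$ and follow the strategy of Lemma \ref{fun-in}: after a rotation placing $Dv(x_0)$ along $e_n$, set $s:=|Dv(x_0)|>0$ and $t:=s^2/(s^2+\ez)\in(0,1]$. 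At $x_0$ one then has
\[
\bdz^N_{\fz,\ez}v=tv_{x_nx_n},\quad \frac{|D^2vDv|^2}{|Dv|^2+\ez}=t\sum_{i=1}^n v_{x_ix_n}^2,\quad \bdz^N_{p,\ez}v=\bdz v+(p-2)tv_{x_nx_n}.
\]

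The first main step is to move the absorbing term $-\eta|D^2v|^2$ to the left-hand side and expand $|D^2v|^2=v_{x_nx_n}^2+\sum_{i<n}v_{x_ix_i}^2+2\sum_{i<n}v_{x_ix_n}^2+2\sum_{i<j<n}v_{x_ix_j}^2$. The rearranged left-hand side becomes a linear combination of these four mutually orthogonal groups of squared second derivatives, and a direct calculation identifies the coefficient of $\sum_{i<n}v_{x_ix_n}^2$ as $2(1+\eta+\gz t)$, which is strictly positive since $\gz t\ge\gz>-1$; the coefficient of $\sum_{i<j<n}v_{x_ix_j}^2$ equals $2(1+\eta)>0$. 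Both groups are nonnegative and may be dropped. For the crucial term $(1+\eta)\sum_{i<n}v_{x_ix_i}^2$ I apply Cauchy--Schwarz to obtain $\sum_{i<n}v_{x_ix_i}^2\ge\frac{1}{n-1}(\bdz v-v_{x_nx_n})^2$, use the identity $\bdz v-v_{x_nx_n}=\bdz^N_{p,\ez}v-[(p-2)t+1]v_{x_nx_n}$, and apply Young's inequality with parameter $\eta$ to separate off the error term $-\frac{C(p)}{\eta}(\bdz^N_{p,\ez}v)^2$ while shrinking the $v_{x_nx_n}^2$ contribution by a factor $1-\eta$.

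The main obstacle is then the algebraic bookkeeping that remains for the coefficient of $v_{x_nx_n}^2=t^{-2}(\bdz^N_{\fz,\ez}v)^2$. The key identities $\gz^2-(\gz-p+2)^2=(p-2)(2\gz-p+2)$ and $(1+\gz)^2-(\gz-p+2)^2=(p-1)(2\gz+3-p)$ drive the matching: at $t=1$, the combination $1+2\gz+(p-2)(2\gz-p+2)+\frac{(1-\eta^2)(p-1)^2}{n-1}$ accumulated on the left compares with the target $2(p-1)\bigl[\gz+1-(p-1)\frac{n-2}{2(n-1)}\bigr]$ on the right, and the resulting discrepancy simplifies to $\eta\bigl[1-\frac{(p-1)^2\eta}{n-1}\bigr]v_{x_nx_n}^2$, which is nonnegative for sufficiently small $\eta>0$. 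Extending this matching uniformly in $t\in(0,1]$ gives the inequality in the relevant regime; for $\eta$ outside that range the statement is vacuous because $|D^2v|^2$ (which bounds every remaining left-hand side contribution) is absorbed by $-\eta|D^2v|^2$ on the right. Verifying that this small-versus-large dichotomy in $\eta$ is clean and uniform in $t$ is the subtle part of the proof.
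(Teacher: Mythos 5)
Your overall strategy — restrict to $Dv\ne 0$, rotate so that $Dv$ points along $e_n$, expand $|D^2v|^2$ into orthogonal blocks, and match coefficients — is essentially the same pointwise calculation that underlies the paper's proof, which packages the rotation step into Lemma \ref{fun-in} and then performs the matching via the identities around \eqref{in2-3}--\eqref{in2-4}. Your algebra at $t=1$ (i.e.\ $\ez=0$) is correct: indeed
\[
1+2\gz+(p-2)(2\gz-p+2)+\tfrac{(p-1)^2}{n-1}=(1+\gz)^2-(\gz-p+2)^2+\tfrac{(p-1)^2}{n-1}=2(p-1)\Bigl[\gz+1-(p-1)\tfrac{n-2}{2(n-1)}\Bigr],
\]
so at $t=1$ the two sides match exactly when $\eta=0$, with the surplus $\eta[1-\eta(p-1)^2/(n-1)]v_{x_nx_n}^2$ for $\eta>0$.

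But there is a genuine gap exactly where you flag the ``subtle part.'' The content of Lemma \ref{key-in2} is precisely the uniformity in $\ez$, i.e.\ in $t=|Dv|^2/(|Dv|^2+\ez)\in(0,1]$: the paper devotes Step~2 of its proof (the ${\bf T}\ge 0$ computation) to controlling the correction terms $(\bdz^N_\fz v)^2-(\bdz^N_{\fz,\ez}v)^2$, $\frac{|D^2vDv|^2}{|Dv|^2+\ez}-(\bdz^N_{\fz,\ez}v)^2$, $\frac{|D^2vDv|^2}{|Dv|^2}-(\bdz^N_\fz v)^2$, and $(\bdz^N_{\fz,\ez}v)^2\cdot\ez/|Dv|^2$, which is exactly the $t<1$ discrepancy in your bookkeeping. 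Your proposal asserts ``Extending this matching uniformly in $t\in(0,1]$ gives the inequality in the relevant regime'' without argument, and leaves the question open of whether the quadratic-in-$t$ coefficient $f(t)$ that collects the $v_{x_nx_n}^2$ contribution stays nonnegative on $(0,1)$. In fact one can check that (for $\eta=0$) $f(t)=(t-1)(at-c)$ with $a=\frac{3-2p}{n-1}-1-2\gz$ and $c=\frac{n}{n-1}$, and the admissibility bound $\gz>-1+(p-1)\frac{n-2}{2(n-1)}$ is exactly what forces $at-c<0$ for $t\in(0,1]$; but this use of \eqref{pgz} is the heart of the lemma, not a routine extension, and must be written out. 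Two lesser issues: the justification $\gz t\ge\gz>-1$ for the positivity of $2(1+\eta+\gz t)$ only applies when $\gz<0$ (for $\gz\ge 0$ one instead has $\gz t\ge 0$, so positivity still holds but for a different reason); and the ``vacuous for large $\eta$'' dichotomy, while morally right, requires you to first show that the left-hand side itself is bounded below (e.g.\ by $\min\{1,(\gz+1)^2\}|D^2v|^2-(\gz-p+2)^2(\bdz^N_{\fz,\ez}v)^2$), whereas the paper avoids any dichotomy by applying Young's inequality once with a fixed $C(p)$ valid for every $\eta>0$.
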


\begin{proof}[Proof of Lemma \ref{key-in2}]

Note that, $D^2 v =0$ a.e in $\{D v =0\}$ and hence
$ \Delta^N_{p,\ez}v$ a.e on $\{D v =0\}$.
Then \eqref{in2-eq} holds a.e on $\{D v =0\}$.
 Suppose now that $D v \neq 0$.
Set
$${\bf L}:=|D^2 v |^2+2\gz\frac{|D^2 v  D v |^2}{
|D v |^2+\ez}+\gz^2(\bdz^N_{\fz,\ez} v )^2.$$
For all $\eta>0$ we claim that
\begin{align}\label{in2-1}
{\bf L}\ge [\frac{(p-1)^2}{n-1}+(\gz+1)^2]\left(
\bdz^N_{\fz,\ez} v \right)^2 -\frac{C(p)}{\eta} (\bdz^N_{p,\ez} v)^2
-\eta|D^2 v |^2.
\end{align}
If this claim holds for the moment, a direct calculation shows that
\begin{align*}
&{\bf L}-(\gz-p+2)^2\left(
\bdz^N_{\fz,\ez} v \right)^2\\
&\ge[\frac{(p-1)^2}{n-1}+(\gz+1)^2-(\gz+p-2)^2]\left(
\bdz^N_{\fz,\ez} v \right)^2 -\frac{C(p)}{\eta} (\bdz^N_{p,\ez} v)^2
-\eta|D^2 v |^2.
\end{align*}
Observe that
\begin{align*}
\frac{(p-1)^2}{n-1}+(\gz+1)^2-(\gz-p+2)^2
&=\frac{(p-1)^2}{n-1}+(p-1)(2\gz-p+3)\\
&=(p-1)\left[\frac{p-1}{n-1}+2\gz-p+3\right]\\
&=(p-1)\left[\frac{p-1}{n-1}-(p-1)+2(\gz+1)\right]\\
&=2(p-1)\left[\gz+1-(p-1)\frac{n-2}{2(n-1)}\right].
\end{align*}
From this we conclude this result.

We now prove the claim \eqref{in2-1}, which is divided into the following two steps.

{\it Step 1}. For all $\eta\in (0,1)$,  we show that
\begin{align}\label{cla-st1}
{\bf L}&\ge \left[\frac{(p-1)^2}{n-1}+(\gz+1)^2\right](\bdz^N_{\fz,\ez} v )^2\nonumber\\
&\quad\quad
+\Big\{\frac n{n-1}[(\bdz_\fz^N  v )^2-(\bdz^N_{\fz,\ez} v )^2]\nonumber\\
&\quad\quad+2\gz[\frac{|D^2 v  D v |^2}{
|D v |^2+\ez}-(\bdz^N_{\fz,\ez} v )^2]
+2[\frac{|D^2v Dv |^2}{|Dv |^2}-(\bdz^N_\fz  v )^2]\nonumber\\
&\quad\quad+ \frac{2(p-2)}{n-1}  ( \bdz_{\fz,\ez}^N v)^2 \frac \ez{|D v |^2}\Big\}
-\frac {C(p,n)}{\eta}( \bdz^N_{p,\ez} v)^2-\eta |D^2 v |^2.
\end{align}

By $D v \neq 0$,  Lemma \ref{fun-in} gives
\begin{align*}
|D^2 v |^2\ge \frac{(\bdz  v -\bdz^N_\fz  v )^2}{n-1}+2
[|D|D v ||^2-(\bdz^N_{\fz} v )^2]+(\bdz_\fz^N  v )^2.
\end{align*}
 Noting
$$\gz^2(\bdz^N_{\fz,\ez} v )^2=(\gz+1)^2(\bdz^N_{\fz,\ez} v )^2-2\gz(\bdz^N_{\fz,\ez} v )^2
-(\bdz^N_{\fz,\ez} v )^2,$$
we obtain
\begin{align}\label{in2-2}
{\bf L}&\ge\frac 1{n-1}(\bdz  v -\bdz^N_\fz  v )^2
+(\gz+1)^2(\bdz^N_{\fz,\ez} v )^2
+[(\bdz_\fz^N  v )^2-(\bdz^N_{\fz,\ez} v )^2]
\nonumber\\
&\quad+2\gz[\frac{|D^2 v  D v |^2}{
|D v |^2+\ez}-(\bdz^N_{\fz,\ez} v )^2] +2[\frac{|D^2 v  D v |^2}{|D v |^2}-(\bdz^N_\fz  v )^2]
\end{align}
Now we will  bound the first term of the right hand side in \eqref{in2-2}.
Recalling
$$\bdz  v =-(p-2)\bdz^N_{\fz,\ez} v
+ \bdz^N_{p,\ez} v,$$
one has
\begin{align}\label{in2-3}
(\bdz  v -\bdz^N_\fz  v )^2
&=\left(-(p-2)\bdz^N_{\fz,\ez} v + \bdz^N_{p,\ez} v
-\bdz^N_\fz  v \right)^2\nonumber\\
&=\big[(p-2)^2\bdz^N_{\fz,\ez} v +(\bdz^N_\fz  v )^2
+2(p-2)\bdz_\fz^N v\bdz_{\fz,\ez}^N v\big]\nonumber\\
&\quad+\big[ -2  \bdz^N_{p,\ez} v[(p-2)\bdz^N_{\fz,\ez} v
+\bdz^N_{\fz} v ]+( \bdz^N_{p,\ez} v)^2\big].
\end{align}
Since
$(p-2)^2=(p-1)^2-2(p-2)-1$,
we write the first term of right-hand side of \eqref{in2-3} as
\begin{align}\label{in2-4}
&(p-2)^2(\bdz^N_{\fz,\ez} v )^2  +(\bdz^N_\fz  v )^2+2(p-2)\bdz_\fz^N v\bdz_{\fz,\ez}^N v\nonumber\\
&
=(p-1)^2(\bdz^N_{\fz,\ez} v )^2
-2(p-2)(\bdz^N_{\fz,\ez} v )^2
 -(\bdz^N_{\fz,\ez} v )^2\nonumber \\
&\quad+ (\bdz^N_\fz  v )^2+2(p-2)\bdz_\fz^N v\bdz_{\fz,\ez}^N v\nonumber\\
&= (p-1)^2(\bdz^N_{\fz,\ez} v )^2+ [ (\bdz^N_\fz  v )^2-(\bdz^N_{\fz,\ez} v )^2]\nonumber\\
&\quad+2(p-2)\bdz^N_{\fz,\ez} v[\bdz_\fz^N v-\bdz^N_{\fz,\ez} v ]
\end{align}

  Since
 $$ \frac1{|Dv|^2}=  \frac1{|Dv|^2+\ez}\frac\ez{ |Dv|^2}+ \frac1{|Dv|^2+\ez},$$
 we have
 $$ \bdz_\fz^N v-\bdz^N_{\fz,\ez} v
  = \bdz_{\fz,\ez}^N v  \frac \ez{|D v |^2}  .$$
Thus
 $$
 2(p-2)\bdz^N_{\fz,\ez} v[\bdz_\fz^N v-\bdz^N_{\fz,\ez} v ]
 =  2(p-2) ( \bdz_{\fz,\ez}^N v)^2 \frac \ez{|D v |^2} .$$
  For the second term of right-hand side in \eqref{in2-3}, via Young's inequality we have
 \begin{align*}
-2  \bdz^N_{p,\ez} v[(p-2)\bdz^N_{\fz,\ez} v
+\bdz^N_{\fz} v ]+( \bdz^N_{p,\ez} v)^2\ge
-\frac {C(p)}{\eta(n-1)}( \bdz^N_{p,\ez} v)^2-\eta(n-1) |D^2 v |^2.
\end{align*}

 We then have

\begin{align*}
\frac1{n-1}(\bdz  v -\bdz^N_\fz  v )^2
&\ge \frac{ (p-1)^2}{n-1}(\bdz^N_{\fz,\ez} v )^2+ \frac1{n-1}[(\bdz_\fz^N  v )^2-(\bdz^N_{\fz,\ez} v )^2]\\
&\quad\quad+ \frac{2(p-2)}{n-1}  ( \bdz_{\fz,\ez}^N v)^2 \frac \ez{|D v |^2} -\frac {C(p,n)}{\eta}( \bdz^N_{p,\ez} v)^2-\eta |D^2 v |^2.
\end{align*}
Plugging this into    \eqref{in2-2},  and by
 applying Young's inequality we conclude \eqref{cla-st1}.

{\it Step 2}. We prove that ${\bf T}\ge 0$. Here ${\bf T}$ is defined by
\begin{align*}
{\bf T}:&=\frac n{n-1}[(\bdz_\fz^N  v )^2-(\bdz^N_{\fz,\ez} v )^2]\\
&\quad+2\gz[\frac{|D^2 v  D v |^2}{
|D v |^2+\ez}-(\bdz^N_{\fz,\ez} v )^2]
+2[\frac{|D^2u Dv |^2}{|Dv |^2}-(\bdz^N_\fz  v )^2]\\
&\quad\quad+ \frac{2(p-2)}{n-1}  ( \bdz_{\fz,\ez}^N v)^2 \frac \ez{|D v |^2}
\end{align*}

In order to proving ${\bf T}\ge 0$, recalling that
$$\gz>-1+(p-1)\frac{n-2}{2(n-1)}\ge -1$$
and using the fact that
$$\frac{|D^2 v  D v |^2}{
|D v |^2+\ez}-(\bdz^N_{\fz,\ez} v )^2
=\frac{|D^2 v  D v |^2}{
|D v |^2+\ez}-\frac{(\bdz_\fz  v )^2}{(|D v |^2+\ez)^2}\ge 0,$$
one gets
$$
2\gz[\frac{|D^2 v  D v |^2}{
|D v |^2+\ez}-(\bdz^N_{\fz,\ez} v )^2]\ge-2[\frac{|D^2 v  D v |^2}{
|D v |^2+\ez}-(\bdz^N_{\fz,\ez} v )^2].$$
 Thus
 \begin{align*}
&  2\gz[\frac{|D^2 v  D v |^2}{
|D v |^2+\ez}-(\bdz^N_{\fz,\ez} v )^2]
+2[\frac{|D^2u Dv |^2}{|Dv |^2}-(\bdz^N_\fz  v )^2]\\
&\ge  -2[\frac{|D^2 v  D v |^2}{
|D v |^2+\ez}-(\bdz^N_{\fz,\ez} v )^2]+2[\frac{|D^2u Dv |^2}{|Dv |^2}-(\bdz^N_\fz  v )^2]\\
&= -2[(\bdz^N_\fz  v )^2-(\bdz^N_{\fz,\ez} v )^2]+  2[\frac{|D^2u Dv |^2}{|Dv |^2}- \frac{|D^2 v  D v |^2}{
|D v |^2+\ez}].
\end{align*}
We therefore obtain
\begin{align*}
{\bf T}&\ge -\frac {n-2}{n-1}[(\bdz_\fz^N  v )^2-(\bdz^N_{\fz,\ez} v )^2]   + 2[\frac{|D^2u Dv |^2}{|Dv |^2}- \frac{|D^2 v  D v |^2}{
|D v |^2+\ez}] + \frac{2(p-2)}{n-1}  ( \bdz_{\fz,\ez}^N v)^2 \frac \ez{|D v |^2}.
\end{align*}
Observe  that
 \begin{align*}-\frac {n-2}{n-1}[(\bdz_\fz^N  v )^2-(\bdz^N_{\fz,\ez} v )^2]
& =-\frac{n-2}{n-1}(\bdz^N_{\fz } v )^2[1-  \frac{|Dv|^4}{(|Dv|^2+\ez)^2}]  \\
&=-\frac{n-2}{n-1}  (\bdz^N_{\fz } v )^2 \frac{\ez^2+2\ez|Dv|^2}{(|Dv|^2+\ez)^2 }.
\end{align*}
By Cauchy-Schwarz inequality, one has
 \begin{align*}
 2[\frac{|D^2u Dv |^2}{|Dv |^2}- \frac{|D^2 v  D v |^2}{
|D v |^2+\ez}] &= 2 \frac{|D^2u Dv |^2}{|Dv |^2}[1-  \frac{| D v |^2}{
|D v |^2+\ez}]\\
&= 2 \frac{|D^2u Dv |^2}{|Dv |^2}\frac\ez{|Dv |^2+\ez}\\
&\ge 2 ( \Delta^N_\fz v)^2\frac\ez{|Dv |^2+\ez}.
\end{align*}
Noting
 \begin{align*}
 \frac{2(p-2)}{n-1}  ( \bdz_{\fz,\ez}^N v)^2 \frac \ez{|D v |^2}=   \frac{2(p-2)}{n-1} ( \bdz_{\fz}^N v)^2 \frac{ \ez |Dv|^2}{(|D v |^2+\ez)^2},
\end{align*}
we further have
\begin{align*}
{\bf T}&\ge  \frac{ (\bdz^N_{\fz } v )^2}{(|Dv|^2+\ez)^2} \frac{1}{n-1} \left\{
 -(n-2)  [\ez^2+2\ez|Dv|^2]+ 2(n-1)\ez[|Dv |^2+\ez]+  2(p-2)  \ez |Dv|^2\right\}\\
 &= \frac{ (\bdz^N_{\fz } v )^2}{(|Dv|^2+\ez)^2} \frac{1}{n-1} \{[2(n-1)-(n-2)]\ez^2+[2(n-1)-2(n-2)+2(p-2)]\ez |Dv |^2\}\\
 &=  \frac{ (\bdz^N_{\fz } v )^2}{(|Dv|^2+\ez)^2} \frac{1}{n-1}\left\{ n\ez^2+2(p-1)\ez|Dv |^2\right\}\\
 &\ge0.
 \end{align*}

Finally, thanks to the step 1 and step 2, we conclude claim as desired and hence
we finish this proof.
\end{proof}

We now begin to
show Lemma \ref{key-in}.
\begin{proof}[Proof of Lemma \ref{key-in}]
Applying a differential inequality in \cite[Lemma 3.1]{cm18} we deduce
\begin{align}\label{pr2-1}
&(|D v |^2+\ez)^{  \gz  }\left\{|D^2 v |^2+2\gz\frac{|D^2 v D v |^2}{|D v |^2
+\ez}+(\gz^2-(\gz-p+2)^2)(\bdz^N_{\fz,\ez} v )^2\right\}
\nonumber \\
&=(|D v |^2+\ez)^{  \gz  }\left[(\bdz^N_{p,\ez} v)^2-2(p-2-\gz)
\bdz^N_{p,\ez} v \bdz^N_{\fz,\ez} v\right]\nonumber\\
&\quad
+{\rm div}\left((|D v |^2+\ez)^{  \gz  }(D^2 v D v -\bdz  v D v )\right)\nonumber\\
&\le [1+\frac1\dz ](|D v |^2+\ez)^{  \gz  }(\bdz^N_{p,\ez} v)^2+(\gz-p+2)^2\dz(|D v |^2+\ez)^{  \gz  } (\bdz^N_{\fz,\ez} v )^2\nonumber\\
&\quad+{\rm div}\left((|D v |^2+\ez)^{  \gz  }(D^2 v D v -\bdz  v D v )\right),
\end{align}
where we apply Young's inequality for the term $-2(p-2-\gz)
\bdz^N_{p,\ez} v \bdz^N_{\fz,\ez} v$  so to get in the last inequality.
Observe that the left-hand side of \eqref{pr2-1} enjoys a lower bound by
using Lemma \ref{key-in2}.
 From this and  \eqref{pr2-1} again it follows that
\begin{align*}
&\left\{ 2(p-1)\left[\gz+1-(p-1)\frac{n-2}{2(n-1)}\right]  -(\gz-p+2)^2\dz\right\}
(|D v |^2+\ez)^{  \gz  } (\bdz^N_{\fz,\ez}  v )^2
\nonumber \\
&\quad\le (1+ \frac1\dz+C(p)\frac1\eta  )(|D v |^2+\ez)^{  \gz  }(\bdz^N_{p,\ez} v)^2\\
&\quad\quad+{\rm div}\left([|D v |^2+\ez]^{  \gz  }(D^2 v D v -\bdz  v D v )\right)+\eta|D^2 v |^2
\end{align*}
for all $\eta>0$.

Since $\gz>-1+(p-1)\frac{n-2}{2(n-1)}$, we can choose $\dz>0$ such that
$$ (p-1)[ \gz+1-(p-1)\frac{n-2}{2(n-1)}]-(\gz-p+2)^2\dz:=c=c(p,n,\gz)>0,$$
which leads to
\begin{align}\label{pr2-2}
 c(|D v |^2+\ez)^{  \gz  } (\bdz^N_{\fz,\ez}  v )^2
&\le C(1+\frac1\eta)(|D v |^2+\ez)^{  \gz  }(\bdz^N_{p,\ez} v)^2
+ \eta(|D v |^2+\ez)^{  \gz  }|D^2 v |^2\nonumber\\
&\quad\quad\quad\quad\quad\quad+{\rm div}\left((|D v |^2+\ez)^{  \gz  }(D^2 v D v -\bdz  v D v )\right).
\end{align}
for some $C=C(p,n,\gz)>0$.

On the other hand, we take $\dz=1$ in the inequality \eqref{pr2-1} to obtain
\begin{align*}
& [|D v |^2+\ez]^{  \gz  }\left\{|D^2 v |^2+2\gz\frac{|D^2 v D v |^2}{|D v |^2
+\ez}+\gz^2(\bdz^N_{\fz,\ez} v )^2\right\}
\nonumber \\
&\le 2(|D v |^2+\ez)^{  \gz  }(\bdz^N_{p,\ez} v)^2
+5(r-p+2)^2(\bdz^N_{\fz,\ez} v )^2(|Dv|^2+\ez)^{\gz}\\
&\quad
+{\rm div}\left([|D v |^2+\ez]^{  \gz  }(D^2 v D v -\bdz  v D v )\right).
\end{align*}
Since the second term  of the right-hand side above can be bounded by the right-hand side of \eqref{pr2-2}, then we conclude that
\begin{align*}
& (|D v |^2+\ez)^{  \gz  }\left\{|D^2 v |^2+2\gz\frac{|D^2 v D v |^2}{|D v |^2
+\ez}+\gz^2(\bdz^N_{\fz,\ez} v )^2-C_4\eta|D^2 v |^2\right\}
\nonumber \\
&\le (C_1+C_2\eta^{-1})(|D v |^2+\ez)^{  \gz  }(\bdz^N_{p,\ez} v)^2+C_3{\rm div}\left((|D v |^2+\ez)^{  \gz  }(D^2 v D v -\bdz  v D v )\right),
\end{align*}
where $C_i=C_i(p,n,\gz)$ with $i=1,2,3,4$. Recalling that (see \cite[Theorem 2.1]{bcdm22})
$$|D^2 v |^2+2\gz\frac{|D^2 v D v |^2}{|D v |^2
+\ez}+\gz^2(\bdz^N_{\fz,\ez} v )^2\ge \min\{1,(\gz+1)^2\}|D^2v|^2.$$
Thus
\begin{align*}
&[ \min\{1,(\gz+1)^2\}-C_4\eta](|D v |^2+\ez)^{  \gz  }|D^2v|^2
\nonumber \\
&\le [C_1+C_2\eta^{-1}](|D v |^2+\ez)^{  \gz  }(\bdz^N_{p,\ez} v)^2
+C_3{\rm div}\left((|D v |^2+\ez)^{  \gz  }(D^2 v D v -\bdz  v D v )\right).
\end{align*}
Therefore, we choose a suitable $\eta>0$ so that
$C_4\eta=\frac 12 \min\{1,(\gz+1)^2\}$ as desired.

\end{proof}

\section{Global Sobolev estimates and proof of Theorem \ref{thm-v}}
In this section, we assume that the following:
\begin{itemize}
\item[$\bullet$] $\Omega$ is smooth bounded domain
in $\rr^n$ with $n\ge 2$.
\item[$\bullet$] $v\in C^\fz(\overline \Omega)$ and
$v=0$ on $\partial \Omega$.
\item[$\bullet$] Let $\ez\in (0,1]$. Let $p>1$ and let $\gz>-1$ satisfy
\eqref{pgz}.
\end{itemize}
We shall establish the global second order estimate for the gradient term
$(|Dv|^2+\ez)^{\gz}Dv$. As a consequence, some higher regularity of
gradient and H\"older regularity for $v$ are also proved in this section.
Moreover, Theorem \ref{thm-v} follows from these key estimate.

The following global second order estimate is key point in this section.
\begin{lem}\label{sd-es}

Let $\bz\ge0$. Then the following holds.

\begin{enumerate}
  \item [(i)] If $\Omega$ is convex,  then
\begin{align}\label{sd-es1}
\int_{\Omega}|D^2 v |^2(|D v |^2+\ez)^{\gz+\bz}\,dx
\le  C (1+\bz^2)\int_{\Omega}
(\bdz^N_{p,\ez}v)^2(|D v |^2+\ez)^{\gz+ \bz}\,dx,
\end{align}
where $C=C(p,n,\gz)>0$.
  \item [(ii)] Assume that $\Omega$ is not convex. There exists a positive constant $
  \dz=\dz(p,n,\gz, d_{\Omega},L_{\Omega})$ such that if $\lim_{
  r\to 0^{+}}\Psi_{\Omega}(r)\le \delta $,  then for some constat
$C =C (p,n,\gz, L_{\Omega},d_{\Omega})$
 \begin{align}\label{sd-es2}
&\int_{\Omega}|D^2v|^2(|Dv|^2+\ez)^{\gz+\beta }\,dx\nonumber\\
&
\le C(1+\bz^2)\left[\int_{\Omega}
(\bdz^N_{p,\ez}v)^2 (|Dv|^2+\ez)^{\gz+  \beta  }\,dx
+ \left(\int_{\Omega}(|Dv|^2+\ez)^{\frac{\gz + \beta+1 }{2}}  \,dx\right)^2\right].
\end{align}
\end{enumerate}

\end{lem}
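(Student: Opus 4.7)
The plan is to integrate the pointwise inequality of Lemma~\ref{key-in} against the weight $w:=(|Dv|^2+\ez)^\beta$ and split the resulting divergence error into a boundary trace and a bulk correction. Setting $\mathbf F:=D^2v\,Dv-\bdz v\,Dv$, an integration by parts produces
$$
C_1\!\int_\Omega|D^2v|^2(|Dv|^2+\ez)^{\gz+\beta}dx\le C_2\!\int_\Omega(\bdz^N_{p,\ez}v)^2(|Dv|^2+\ez)^{\gz+\beta}dx+I_{\mathrm b}+I_{\mathrm v},
$$
with $I_{\mathrm b}:=\int_{\partial\Omega}(|Dv|^2+\ez)^{\gz+\beta}\mathbf F\cdot\nu\,d\mathcal H^{n-1}$ and $I_{\mathrm v}:=-2\beta\int_\Omega(|Dv|^2+\ez)^{\gz+\beta-1}D^2v\,Dv\cdot\mathbf F\,dx$, the latter coming from $Dw=2\beta(|Dv|^2+\ez)^{\beta-1}D^2v\,Dv$.

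For $I_{\mathrm v}$ I would exploit the identities $D^2v\,Dv\cdot Dv=(|Dv|^2+\ez)\bdz^N_{\fz,\ez}v$ and $\bdz v=\bdz^N_{p,\ez}v-(p-2)\bdz^N_{\fz,\ez}v$ to expand. The term $-2\beta\int(|Dv|^2+\ez)^{\gz+\beta-1}|D^2v\,Dv|^2\,dx$ has the favorable sign and is dropped; the cross term $2\beta\int(|Dv|^2+\ez)^{\gz+\beta}\bdz^N_{p,\ez}v\cdot\bdz^N_{\fz,\ez}v\,dx$ is handled by Young's inequality, while the residual $(\bdz^N_{\fz,\ez}v)^2$ contribution is controlled by re-applying the pointwise bound \eqref{pr2-2} from the proof of Lemma~\ref{key-in}, which introduces further boundary/bulk terms to be handled analogously. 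Scaling the two Young parameters as $\eta\sim 1/\beta$ and $\eta'\sim 1/\beta$ yields
$$
I_{\mathrm v}\le\tfrac{C_1}{4}\!\int_\Omega|D^2v|^2(|Dv|^2+\ez)^{\gz+\beta}dx+C(1+\beta^2)\!\int_\Omega(\bdz^N_{p,\ez}v)^2(|Dv|^2+\ez)^{\gz+\beta}dx,
$$
the $\beta^2$ prefactor emerging from the product of those two reciprocal scalings.

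For $I_{\mathrm b}$ I would use $v\equiv 0$ on $\partial\Omega$ (so $Dv=v_\nu\nu$) together with the classical splitting $\bdz v=\Delta_{\partial\Omega}v+(n-1)\mathcal H v_\nu+v_{\nu\nu}$ and $\Delta_{\partial\Omega}v|_{\partial\Omega}\equiv 0$. This reduces $\mathbf F\cdot\nu$ on $\partial\Omega$ to $-(n-1)\mathcal H|Dv|^2$, where $\mathcal H$ denotes the mean curvature of $\partial\Omega$. In case (i), convexity forces $\mathcal H\ge 0$, hence $I_{\mathrm b}\le 0$ and this term is discarded, yielding \eqref{sd-es1}. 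In case (ii), I would invoke the Cianchi--Maz'ya trace inequality from \cite{cm18}, adapted to the weak second fundamental form $\mathcal B$: under $\lim_{r\to 0^+}\Psi_\Omega(r)<\dz$ it bounds
$$
|I_{\mathrm b}|\le\dz\!\int_\Omega|D^2v|^2(|Dv|^2+\ez)^{\gz+\beta}dx+C\Bigl(\int_\Omega(|Dv|^2+\ez)^{(\gz+\beta+1)/2}dx\Bigr)^{\!2}
$$
with $C=C(p,n,\gz,L_\Omega,d_\Omega)$. Taking $\dz$ sufficiently small, the first summand is absorbed into the LHS and \eqref{sd-es2} follows.

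The main obstacle I anticipate is case (ii): extracting the precise weighted trace inequality from \cite{cm18} so that exactly the $\|(|Dv|^2+\ez)^{(\gz+\beta+1)/2}\|_{L^1(\Omega)}^2$ tail appears on the right, with dependencies only on $(p,n,\gz,L_\Omega,d_\Omega)$, and pinning down the quantitative smallness threshold $\dz$ that allows absorption. A secondary technical point is ensuring the $\beta^2$-blow-up in $I_{\mathrm v}$ emerges without residual uncontrollable $|D^2v|^2$ contributions; this forces a careful coupling between the bound $|\bdz^N_{\fz,\ez}v|\le|D^2v|$ and a second-round use of \eqref{pr2-2} with Young parameters calibrated to $\beta$.
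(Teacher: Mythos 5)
Your overall strategy — multiplying the pointwise inequality of Lemma~\ref{key-in} by the weight $(|Dv|^2+\ez)^\beta$, integrating by parts, and splitting the divergence error into boundary and bulk pieces — matches the paper's proof, and your treatment of case (i) is essentially correct. Two smaller remarks on the bulk error $I_{\mathrm v}$: the paper handles the residual $(\bdz^N_{\fz,\ez}v)^2$ term without any recursion, simply by using $\bdz v=\bdz^N_{p,\ez}v-(p-2)\bdz^N_{\fz,\ez}v$ and noting that $|D^2v\,Dv|^2+(p-2)\tfrac{(\bdz_\fz v)^2}{|Dv|^2+\ez}\ge(p-1)\,|D^2v\,Dv|^2\ge0$ (by $\tfrac{(\bdz_\fz v)^2}{|Dv|^2+\ez}\le|D^2v\,Dv|^2$), so the whole group can be dropped; a single Young's inequality on the cross term, with a \emph{fixed} small parameter $\eta\sim C_1$, then directly produces the $\beta^2/\eta\sim\beta^2$ factor. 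Re-applying \eqref{pr2-2} would re-introduce a divergence term and set up an unnecessary bootstrap — you should avoid it.

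The genuine gap is in case (ii). The inequality you write for $I_{\mathrm b}$,
\[
|I_{\mathrm b}|\le\dz\int_\Omega|D^2v|^2(|Dv|^2+\ez)^{\gz+\beta}\,dx+C\Bigl(\int_\Omega(|Dv|^2+\ez)^{(\gz+\beta+1)/2}\,dx\Bigr)^{\!2},
\]
is \emph{not} the conclusion of any single trace inequality: it is the end product of a chain. The available tool (Lemma~\ref{trace-sob}) is purely local: for $\xi\in C^\infty_c(B_r(x))$ with $x\in\partial\Omega$, it controls $\int_{\partial\Omega\cap B_r}\!|\mathcal B|\,|w\xi|^2$ by $\Psi_\Omega(r)\int_{\Omega\cap B_r}|\nabla(w\xi)|^2$. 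To use it one must first introduce a cut-off $\xi$ \emph{before} integrating \eqref{key-in} (arriving at \eqref{sd-eq7}), take $w=(|Dv|^2+\ez)^{(\gz+\beta)/2}Dv$, absorb the trace contribution with a smallness threshold $\dz\lesssim C_1/(1+\gz+\beta)^2$, and sum over a finite covering with a partition of unity. This yields an \emph{extra} bulk error of the form $C\int_\Omega(|Dv|^2+\ez)^{\gz+\beta+1}\,dx$ from the $|D\xi|^2$ terms — and this is \emph{not} yet the $L^1$-squared tail you want on the right of \eqref{sd-es2}. The final, crucial step — which your proposal omits entirely — is a Sobolev/Gagliardo--Nirenberg interpolation (the paper cites \cite[Lemma 7.2]{accfm}) splitting
\[
C\int_\Omega(|Dv|^2+\ez)^{\gz+\beta+1}\,dx\le\tfrac12\int_\Omega|D^2v|^2(|Dv|^2+\ez)^{\gz+\beta}\,dx+C(1+\beta^2)\Bigl(\int_\Omega(|Dv|^2+\ez)^{(\gz+\beta+1)/2}\,dx\Bigr)^{\!2},
\]
with the first summand re-absorbed into the left-hand side. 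So the ``trace inequality'' you invoke packages four distinct steps (localization, local trace, covering, Sobolev interpolation), and the last of these is the one that actually produces the squared-$L^1$ tail. Without identifying that interpolation step, the claimed bound on $I_{\mathrm b}$ is unsupported.
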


We denote
\begin{align}\label{q-star}
2^{\star}=q>2\quad{\rm for\ all }\quad q>2 \quad {\rm if}\quad  n=2\quad {\rm and}\quad
2^{\star}=\frac{2n}{n-2}\quad{\rm if}\quad n\ge 3.
\end{align}
From this second order estimate, we can show the following higher gradient estimate.
Observe that, if $\gz=p-2$, the following lemma follows from \cite{cm17}.
\begin{lem}\label{Lq-es}

\begin{enumerate}
  \item [(i)] If $ \Omega$ is convex, then
\begin{align}\label{lq-es1}
\||Dv|\|_{L^{2^{\star}(\gz+1)}(\Omega)}
\le C\|(|Dv|^2+\ez)^{ \gz/2 }\bdz^N_{p,\ez}v\|^{\frac 1{\gz+1}}_{L^2(\Omega)}
+C\ez^{\frac 12}
\end{align}
for some constant $C=C(p,n,\gz)$.

  \item [(ii)]
Assume that $\Omega$ is not convex and that $\lim_{
  r\to 0^{+}}\Psi_{\Omega}(r)\le \delta $ for a sufficiently small positive constant $
  \dz=\dz(p,n,\gz, d_{\Omega},L_{\Omega})$.

  If $\gz\le p-2$, then
\begin{align}\label{lq-es3}
\||Dv|\|_{L^{2^{\star}(\gz+1)}(\Omega)}
\le C\|(|Dv|^2+\ez)^{ \gz/2 }\bdz^N_{p,\ez}v\|^{\frac 1{\gz+1}}_{L^2(\Omega)}
+C\ez^{\frac 12},
\end{align}
where $C=C(p,n,\gz, L_{\Omega}, d_{\Omega})$.

If $ \gamma>p-2$, then
\begin{align}\label{lq-es2}
\||Dv|\|_{L^{2^{\star}(\gz+1)}(\Omega)}
\le C\left[\|(|Dv|^2+\ez)^{  \gz/2 }\bdz^N_{p,\ez}v\|^{\frac 1{\gz+1}}_{L^2(\Omega)}
+\|v\|_{L^2(\Omega)}\right]
+C\ez^{\frac 12},
\end{align}
where $C=C(p,n,\gz, L_{\Omega}, d_{\Omega})$.


\end{enumerate}

\end{lem}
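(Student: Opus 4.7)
The plan is to deduce \eqref{lq-es1}--\eqref{lq-es2} from Lemma \ref{sd-es} by applying a Sobolev--Poincar\'e embedding to the nonlinear gradient $w:=(|Dv|^2+\ez)^{(\gz+1)/2}$, in tandem with an integration-by-parts identity obtained by rewriting the non-divergence quantity $\mathcal{F}:=(|Dv|^2+\ez)^{\gz/2}\bdz^N_{p,\ez}v$ in divergence form. First note that $\|Dv\|_{L^{(\gz+1)2^\star}}^{\gz+1}\le\|w\|_{L^{2^\star}}$, and $|Dw|\le(\gz+1)(|Dv|^2+\ez)^{\gz/2}|D^2v|$, so Lemma \ref{sd-es} with $\bz=0$ supplies $\|Dw\|_{L^2}\le C\|\mathcal{F}\|_{L^2}$ in case (i), and $\|Dw\|_{L^2}\le C\|\mathcal{F}\|_{L^2}+C\|w\|_{L^1}$ in case (ii). The Sobolev--Poincar\'e embedding $\|w\|_{L^{2^\star}}\le C(\|Dw\|_{L^2}+\|w\|_{L^s})$ with $s:=(\gz+2)/(\gz+1)$ then reduces the task to controlling $\|w\|_{L^s}^s=\int_\Omega(|Dv|^2+\ez)^{(\gz+2)/2}\,dx$.

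To bound this last integral I rewrite the operator: for $\gz>p-2$ I use
\[\Delta_{\gz+2,\ez}v=\mathcal{F}+(\gz-p+2)(|Dv|^2+\ez)^{\gz/2}\bdz^N_{\fz,\ez}v,\]
while for $\gz\le p-2$ I use $\Delta_{p,\ez}v=(|Dv|^2+\ez)^{(p-\gz-2)/2}\mathcal{F}$. The choice is dictated by avoiding singular factors $(|Dv|^2+\ez)^{-\alpha}$ with $\alpha>0$, which would degenerate as $\ez\to 0$. Multiplying by $v$, integrating by parts using $v=0$ on $\partial\Omega$, and then applying Cauchy--Schwarz together with Lemma \ref{sd-es} to dominate either the $\bdz^N_{\fz,\ez}v$-correction (first case) or the positive-power weight via H\"older (second case), I obtain in case (i)
\[\int_\Omega(|Dv|^2+\ez)^{(\gz+2)/2}\,dx\le C\|v\|_{L^2}\|\mathcal{F}\|_{L^2}+(\ez\text{-terms}).\]
Poincar\'e combined with H\"older, using $(\gz+1)2^\star\ge 2$ (guaranteed by the assumption $\gz>-4/(n+2)$), provides the further bound $\|v\|_{L^2}\le C_\Omega\|Dv\|_{L^{(\gz+1)2^\star}}\le C_\Omega\|w\|_{L^{2^\star}}^{1/(\gz+1)}$.

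Chaining the inequalities produces
$\|w\|_{L^{2^\star}}\le CB+C\bigl(\|w\|_{L^{2^\star}}^{1/(\gz+1)}B\bigr)^{1/s}+(\ez\text{-terms})$
with $B:=\|\mathcal{F}\|_{L^2}$. Since the exponents satisfy $1/((\gz+1)s)+1/s=1/(\gz+2)+(\gz+1)/(\gz+2)=1$, Young's inequality with a small scale parameter bounds the middle term by $\delta\|w\|_{L^{2^\star}}+C_\delta B$; absorbing, and dispatching the $\ez$-terms by a further Young estimate, yields $\|w\|_{L^{2^\star}}\le CB+C\ez^{(\gz+1)/2}$, whence the $(\gz+1)$-th root gives \eqref{lq-es1}. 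Case (ii) with $\gz\le p-2$ runs in the same way: the additional $C\|w\|_{L^1}^2$ term from Lemma \ref{sd-es}(ii) is again absorbed provided the smallness $\lim_{r\to 0^+}\Psi_\Omega(r)<\dz$ is assumed, which fixes the dependence of $\dz$ on $(p,n,\gz,L_\Omega,d_\Omega)$. For $\gz>p-2$ the $p$-Laplace rewriting is unavailable, so one is forced to use the $(\gz+2)$-Laplace form; here Lemma \ref{sd-es}(ii) already consumes part of the absorption margin and the $\|v\|_{L^2}$ factor produced by the testing step cannot be traded back into $\|w\|_{L^{2^\star}}$, leading to the additional $\|v\|_{L^2}$ contribution in \eqref{lq-es2}. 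The main obstacle is precisely this bookkeeping: selecting the correct divergence-form identity in each regime to keep the exponent of $(|Dv|^2+\ez)$ non-negative, and then verifying that Young's absorption still closes after both the IBP-generated $\|v\|_{L^2}$ term and the non-convexity-generated $\|w\|_{L^1}$ term are simultaneously traded against $\|w\|_{L^{2^\star}}$.
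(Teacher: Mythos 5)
Your overall scaffolding (apply Sobolev--Poincar\'e to $w=(|Dv|^2+\ez)^{(\gz+1)/2}$, feed in Lemma~\ref{sd-es} with $\bz=0$, then control the lower-order term via a divergence-form rewriting of $\mathcal{F}$) is the right shape, your $(\gz+2)$-Laplace versus $p$-Laplace case split matches the paper's, and your Young-exponent arithmetic $1/((\gz+1)s)+1/s=1$ with $s=(\gz+2)/(\gz+1)$ is correct. For the non-convex subcase $\gz>p-2$ your plan (test against $v$, integrate by parts, keep the $\|v\|_{L^2}$ factor) is exactly the paper's proof of \eqref{lq-es2}.

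The gap is in cases (i) and the non-convex subcase $\gz\le p-2$, i.e.\ precisely the cases where the conclusion must \emph{not} carry a $\|v\|_{L^2}$ term. There you also test against $v$, producing a factor $\|v\|_{L^2}$, and then try to trade it away via $\|v\|_{L^2}\le C_\Omega\|Dv\|_{L^{(\gz+1)2^\star}}\le C_\Omega\|w\|_{L^{2^\star}}^{1/(\gz+1)}$ followed by Young's inequality. That Poincar\'e--H\"older step requires $(\gz+1)2^\star\ge 2$, i.e.\ $\gz\ge -2/n$ when $n\ge 3$. Lemma~\ref{Lq-es} only assumes the admissibility condition $\gz>\gz_{n,p}=-1+(p-1)\tfrac{n-2}{2(n-1)}$, which does not imply $\gz\ge -2/n$: for instance $\gz_{3,2}=-3/4<-2/3$, so for $n=3$, $p=2$, and $\gz\in(-3/4,-2/3)$ the lemma is in force but your elimination step fails. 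You also cite ``the assumption $\gz>-4/(n+2)$'', but that hypothesis is not part of this lemma (it first enters in Theorems~\ref{th-re1}--\ref{th-re0}), and in any case for $n\ge 3$ one has $-4/(n+2)<-2/n$, so it still would not secure $(\gz+1)2^\star\ge 2$.

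The paper avoids introducing $\|v\|_{L^2}$ in these cases by a different device: it rewrites the equation in divergence form and applies the Cianchi--Maz'ya $L^1$ a priori estimate (Lemma~\ref{L1-f}, from \cite{cm17}), which gives $\int_\Omega(|Dv|^2+\ez)^{\gz/2}|Dv|\,dx\le C\int_\Omega|{\rm div}((|Dv|^2+\ez)^{\gz/2}Dv)|\,dx$ (and the $p$-Laplace analogue when $\gz\le p-2$). The divergence is then bounded pointwise by $C(|Dv|^2+\ez)^{\gz/2}|D^2v|$, whose $L^1$ norm is controlled by the $L^2$ norm already estimated via Lemma~\ref{sd-es}. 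This closes without any Poincar\'e inequality on $v$ and therefore without the restriction $\gz\ge-2/n$. Your argument, as written, proves the lemma only on the smaller range $\gz\ge-2/n$ and needs Lemma~\ref{L1-f} (or an equivalent $L^1$-data estimate) to reach the full admissible range.
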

Observe that, from Lemmas \ref{sd-es} and \ref{Lq-es}, we can prove
Theorem \ref{thm-v}.
\begin{proof}
[Proof of Theorem \ref{thm-v}]
This is a direct consequence of Lemmas \ref{sd-es} and \ref{Lq-es}.

\end{proof}

\subsection{Proof of Lemma \ref{sd-es}}
We begin with recalling the following  weighted trace inequality  as proved by  Cianchi-Maz'ya \cite{cm19} and see also \cite[Proposition 6.3]{accfm}.
\begin{lem}\label{trace-sob}
 For any
$x\in \partial \Omega$, there exists $R_{\Omega}>0$ depending only on $\Omega$ such that
for all $r\in (0, R_{\Omega})$  and for all $v\in W^{1,2}_0(B_r(x))$
\begin{equation}\label{case1-1}
  \int_{\partial\Omega\cap B_r(x)}v^2|\mathcal{B}|\,d \mathcal{H}^{n-1}(y)\leq C  \Psi_{\Omega}  (r)
\int_{\Omega\cap B_r(x)}|\nabla v|^2\,dy,
\end{equation}
where constant $C$ only depends on $L_{\Omega}$.

\end{lem}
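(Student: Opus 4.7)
The plan is to prove Lemma \ref{trace-sob} by combining a generalized H\"older inequality in Lorentz spaces (or, for $n=2$, Orlicz spaces) with a Lorentz-refined Sobolev trace inequality, after locally flattening $\partial\Omega$.

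First I would reduce to the case $v\in C^\fz_c(B_r(x))$ by density; since $v\in W^{1,2}_0(B_r(x))$, extension by zero gives a function in $W^{1,2}(\rn)$ with compact support in $B_r(x)$. Next I would choose $R_\Omega>0$ small enough that, for every $x\in\partial\Omega$ and every $r\in(0,R_\Omega)$, the portion $\partial\Omega\cap B_r(x)$ can be realized as the graph of a Lipschitz function over an $(n-1)$-dimensional disk, after a rigid motion. This flattening is bi-Lipschitz with constant depending only on $L_\Omega$, so it preserves all relevant Lorentz/Orlicz norms of functions and of $|\mathcal{B}|$ up to constants depending only on $L_\Omega$; in particular, after flattening, $\Omega\cap B_r(x)$ becomes (a subset of) a half-space and $\partial\Omega\cap B_r(x)$ becomes (a subset of) a hyperplane.

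For $n\ge 3$, the argument then has two steps. The first step is H\"older's inequality in Lorentz spaces: with dual exponents $n-1$ and $\frac{n-1}{n-2}$,
\begin{align*}
\int_{\partial\Omega\cap B_r(x)} v^2|\mathcal{B}|\,d\mathcal{H}^{n-1}
\le C\,\||\mathcal{B}|\|_{L^{n-1,\infty}(\partial\Omega\cap B_r(x))}\,\|v^2\|_{L^{\frac{n-1}{n-2},1}(\partial\Omega\cap B_r(x))}.
\end{align*}
The first factor is exactly $C\,\Psi_\Omega(r)$ by definition. The second step is the Lorentz-scale trace estimate
\begin{align*}
\|v^2\|_{L^{\frac{n-1}{n-2},1}(\partial\Omega\cap B_r(x))}
=\|v\|_{L^{\frac{2(n-1)}{n-2},2}(\partial\Omega\cap B_r(x))}^2
\le C\,\|\nabla v\|_{L^2(\Omega\cap B_r(x))}^2,
\end{align*}
which follows from the Lorentz-refined Sobolev embedding $W^{1,2}(\rn)\hookrightarrow L^{\frac{2n}{n-2},2}(\rn)$ (O'Neil), combined with a trace/restriction argument on the flattened chart; crucially, no lower-order term appears because $v$ is compactly supported in $B_r(x)$. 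For $n=2$ the same scheme runs in the Orlicz scale: the Hardy--Littlewood dual of $L^{1,\infty}\log L$ is an exponential class $\exp L$, and a Trudinger-type trace inequality yields $\|v^2\|_{\exp L(\partial\Omega\cap B_r(x))}\le C\|\nabla v\|_{L^2(\Omega\cap B_r(x))}^2$, which paired with H\"older gives the required bound.

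The main obstacle is Step two: the classical trace embedding $\|v\|_{L^{\frac{2(n-1)}{n-2}}(\partial\Omega)}\le C\|v\|_{W^{1,2}(\Omega)}$ is not sharp enough, since we need the finer second-index $L^{\cdot,2}$ (respectively the $\exp L$ endpoint when $n=2$) in order to pair dually with the weak-type/Orlicz norm of $\mathcal{B}$. Proving this sharpening with a constant depending only on $L_\Omega$ requires a careful rearrangement-and-interpolation argument on the flattened chart, and must also handle the fact that the relevant norms are on the curved set $\partial\Omega\cap B_r(x)$ rather than on a flat piece of $\rr^{n-1}$; this is precisely the content of the trace estimates developed in \cite{cm19} and \cite[Proposition 6.3]{accfm}, which I would invoke as the technical input, after verifying that the localization to $B_r(x)$ and the dependence of constants only on $L_\Omega$ are preserved.
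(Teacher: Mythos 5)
The paper does not prove Lemma \ref{trace-sob}; it simply records it as an already-known weighted trace inequality, citing Cianchi--Maz'ya \cite{cm19} and \cite[Proposition 6.3]{accfm}. Your proposal correctly unpacks the mechanism that lies behind that result --- duality-pairing of the $L^{n-1,\infty}$ (resp.\ $L^{1,\infty}\log L$) norm of $\mathcal{B}$ against a Lorentz-sharp (resp.\ exponential-Orlicz) trace norm of $v$, after flattening --- and the Lorentz-space bookkeeping (the H\"older pairing $L^{n-1,\infty}\times L^{\frac{n-1}{n-2},1}\to L^1$ and the identity $\|v^2\|_{L^{\frac{n-1}{n-2},1}}=\|v\|^2_{L^{\frac{2(n-1)}{n-2},2}}$) is accurate. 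As you yourself note, the genuinely hard step is the Lorentz/Orlicz-refined trace inequality with constant controlled by $L_\Omega$ alone, and you would import this from the very references the paper cites; so your sketch is consistent with, and essentially an expansion of, what the paper does.
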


We now prove Lemma \ref{sd-es} by an argument of \cite{cm19}. We begin to
show the following lemma.
\begin{lem} For any cut-off function $\xi\in C^\fz_c(\rr^n)$, it holds that
 \begin{align}\label{sd-eq7}
&\frac 12C_1
\int_{\Omega}|D^2 v |^2(|D v |^2+\ez)^{ \gz+\bz}
\xi^2\,dx\nonumber \\
&\le C (1+\bz^2)
\int_{\Omega} (|Dv|^2+\ez)^{\gz}(\Delta_{p,\ez}^Nu)^2(|D v |^2+\ez)^{\bz}\xi^2\,dx+C \int_{\Omega}(|D v |^2+\ez)^{ \gz+\bz+1 }|D\xi|^2\,dx\nonumber\\
&\quad\quad+
\int_{\partial\Omega}(|D v |^2+\ez)^{ \gz+\bz}|D v |^2|\mathcal{B}|\xi^2\,d\mathcal{H}^{n-1}.
\end{align}
\end{lem}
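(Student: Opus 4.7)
The plan is to test the pointwise inequality \eqref{key-ineq} of Lemma \ref{key-in} against the weight $(|Dv|^2+\ez)^\bz\xi^2$ and integrate over $\Omega$. The $|D^2v|^2$ term and the $(\bdz^N_{p,\ez}v)^2$ term then transfer directly to the two sides of \eqref{sd-eq7} (with the original LHS constant $C_1$ becoming $\tfrac12 C_1$ once small absorbable remainders are peeled off). The substance of the proof is to control the divergence integral
\begin{equation*}
I:=\int_\Omega(|Dv|^2+\ez)^\bz\xi^2\,{\rm div}\bigl[(|Dv|^2+\ez)^\gz(D^2vDv-\bdz v\,Dv)\bigr]\,dx
\end{equation*}
via integration by parts, producing a boundary contribution on $\partial\Omega$ and an interior contribution, each of which must be handled separately.

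For the boundary contribution, I would use that $v=0$ on $\partial\Omega$, hence $Dv=(\partial_\nu v)\nu$ there, together with the classical identity $\bdz v=\partial^2_{\nu\nu}v+H\partial_\nu v$ on $\partial\Omega$ (the tangential Laplace-Beltrami of $v$ vanishes because $v\equiv 0$ on the boundary), where $H$ is the mean curvature. An elementary calculation then yields $(D^2vDv-\bdz v\,Dv)\cdot\nu=-H|Dv|^2$ on $\partial\Omega$; bounding $|H|\le(n-1)|\mathcal{B}|$ produces precisely the boundary integrand $(|Dv|^2+\ez)^{\gz+\bz}|Dv|^2|\mathcal{B}|\xi^2$ appearing in \eqref{sd-eq7}. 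For convex $\Omega$ the boundary term has favorable sign and can simply be dropped.

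For the interior contribution $-\int_\Omega\nabla[(|Dv|^2+\ez)^\bz\xi^2]\cdot F\,dx$ with $F:=(|Dv|^2+\ez)^\gz(D^2vDv-\bdz v\,Dv)$, I would decompose
\begin{equation*}
\nabla[(|Dv|^2+\ez)^\bz\xi^2]=2\bz(|Dv|^2+\ez)^{\bz-1}(D^2v)(Dv)\xi^2+2(|Dv|^2+\ez)^\bz\xi\,D\xi.
\end{equation*}
The $D\xi$-piece, combined with $|F|\le C|D^2v|(|Dv|^2+\ez)^{\gz+1/2}$ and Young's inequality, produces a small multiple of $\int|D^2v|^2(|Dv|^2+\ez)^{\gz+\bz}\xi^2\,dx$ (absorbed into the LHS) plus the cutoff integral $\int(|Dv|^2+\ez)^{\gz+\bz+1}|D\xi|^2\,dx$ required by \eqref{sd-eq7}. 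For the $\bz$-piece, expanding $(D^2vDv)\cdot(D^2vDv-\bdz v\,Dv)=|D^2vDv|^2-\bdz v\cdot\bdz_\fz v$ and using $\bdz v=\bdz^N_{p,\ez}v-(p-2)\bdz^N_{\fz,\ez}v$ reduces the dominant contribution to the cross-term $\bdz^N_{p,\ez}v\cdot\bdz_\fz v$; Young's inequality with weight $\eta$ there yields an absorbable remainder plus $C\bz^2\eta^{-1}\int(\bdz^N_{p,\ez}v)^2(|Dv|^2+\ez)^{\gz+\bz}\xi^2\,dx$, which is precisely the source of the $(1+\bz^2)$ prefactor in \eqref{sd-eq7}.

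The main obstacle is to handle the residual pair $|D^2vDv|^2$ (which enters with the favorable minus sign after integration by parts) together with $-(p-2)(\bdz_\fz v)^2/(|Dv|^2+\ez)$ (whose sign depends on $p$). They are combined via the pointwise estimate $(\bdz_\fz v)^2\le|D^2vDv|^2(|Dv|^2+\ez)$, so that the net contribution has favorable sign and can be dropped when $|p-2|\le 1$; in the remaining range of $p$ one appeals to the refined algebraic structure of Lemma \ref{key-in2} together with admissibility $\gz>\gz_{n,p}$ to keep the residual absorbable. Finally, choosing $\eta$ small enough depending only on $C_1$ and $p$ (not on $\bz$) so as to absorb all remaining $\eta$-terms of the form $\int|D^2v|^2(|Dv|^2+\ez)^{\gz+\bz}\xi^2\,dx$ into the LHS yields \eqref{sd-eq7}.
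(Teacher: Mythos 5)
Your overall structure is the paper's: multiply the pointwise inequality \eqref{key-ineq} by $(|Dv|^2+\ez)^{\bz}\xi^2$, integrate, integrate by parts the divergence term, bound the boundary contribution by the Grisvard-type identity $|(D^2vDv-\bdz v\,Dv)\cdot\nu|\le |Dv|^2|\mathcal{B}|$ on $\partial\Omega$, handle the $D\xi$-piece by Young's inequality, and handle the $\bz$-piece by rewriting $\bdz v=\bdz^N_{p,\ez}v-(p-2)\bdz^N_{\fz,\ez}v$ and applying Young to the cross term. All of that matches the paper, and your derivation of the boundary identity from $v\equiv 0$ on $\partial\Omega$ (rather than citing Grisvard) is a legitimate equivalent.

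However, your treatment of the residual in the $\bz$-piece contains a confused step. After the substitution, the non-cross-term contribution is
\begin{equation*}
-\bz\left[|D^2vDv|^2+(p-2)\frac{(\bdz_\fz v)^2}{|Dv|^2+\ez}\right](|Dv|^2+\ez)^{\gz+\bz-1}\xi^2,
\end{equation*}
and you claim the bracket is nonnegative only when $|p-2|\le 1$, invoking Lemma \ref{key-in2} for the ``remaining range of $p$''. In fact the bracket is nonnegative for \emph{every} $p>1$: when $p\ge 2$ both summands are already nonnegative, and when $1<p<2$ the Cauchy--Schwarz bound $(\bdz_\fz v)^2\le |D^2vDv|^2(|Dv|^2+\ez)$ gives
\begin{equation*}
|D^2vDv|^2+(p-2)\frac{(\bdz_\fz v)^2}{|Dv|^2+\ez}\ge (p-1)\,|D^2vDv|^2\ge 0.
\end{equation*}
Thus the residual can simply be dropped for the full admissible range; no appeal to Lemma \ref{key-in2} is required or even applicable here. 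Lemma \ref{key-in2} is a structural ingredient consumed earlier, inside the proof of the pointwise inequality Lemma \ref{key-in}; it is not designed to, and cannot, absorb an integration-by-parts residual of the form above. With that correction your proof is complete and coincides with the paper's.
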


\begin{proof}
Multiplying both sides
 of the inequality \eqref{key-in} by $\xi^2(|D v |^2+\ez)^{\bz}$ for  any cut-off function $\xi\in C^\fz_c(\rr^n)$,
and  integrating   over $\Omega$, we obtain
 \begin{align}\label{sd-eq2}
&C_1
\int_{\Omega}|D^2 v |^2(|D v |^2+\ez)^{\gz+\bz}
\xi^2\,dx\nonumber \\
&\le C_2
\int_{\Omega} ((|Dv|^2+\ez)^{\frac\gz2}\Delta_{p,\ez}^Nu)^2(|D v |^2+\ez)^{\bz}
\xi^2\,dx\nonumber\\
&\quad+
\int_{\Omega}{\rm div}\left((|D v |^2+\ez)^{\gz}(D^2 v D v -\bdz  v D v )\right)(|D v |^2+\ez)^{\bz}\xi^2\,dx.
\end{align}
Via integration by
 parts we get
  \begin{align}\label{sd-eq3}
&\int_{\Omega}{\rm div}\left((|D v |^2+\ez)^{\gz}(D^2 v D v -\bdz  v D v )\right) (|D v |^2+\ez)^{\bz}\xi^2\,dx\nonumber \\
&=-2\int_{\Omega}\xi(|D v |^2+\ez)^{ \gz+\bz}(D^2 v D v -\bdz  v D v )
\cdot D\xi\,dx\nonumber \\
&\quad+\int_{\partial \Omega}(|D v |^2+\ez)^{ 2\gz+\bz}(D^2 v D v -\bdz  v D v )\cdot \nu\xi^2\,d\mathcal{H}^{n-1}\nonumber \\
&\quad-\beta \int_{\Omega}[|D^2vDv|^2-\bdz v\bdz_\fz v](|D v |^2+\ez)^{ \gz+\bz-1}\xi^2\,dx
\end{align}
For the last term of right-hand side in \eqref{sd-eq3},
via identity $\bdz^N_{p,\ez} v=\bdz v+(p-2)\bdz^N_{\fz,\ez}v$, Young's inequality
 with $\eta>0$ and $\bz\ge 0$ we have
\begin{align}\label{sd-eq4}
&-\beta \int_{\Omega}[|D^2vDv|^2-\bdz v\bdz_\fz v](|D v |^2+\ez)^{ \gz+\bz-1}\xi^2\,dx\nonumber\\
&=-\beta \int_{\Omega}[|D^2vDv|^2+(p-2)\frac{(\bdz_\fz v)^2}{
|Dv|^2+\ez}](|D v |^2+\ez)^{ \gz+\bz-1}\xi^2\,dx\nonumber\\
&\quad-\beta \int_{\Omega}(|Dv|^2+\ez)^{\frac{\gz+\bz} 2}\bdz^N_{p,\ez}v\bdz_\fz v(|D v |^2+\ez)^{\frac{\gz+ \bz-2}{2}}\xi^2\,dx\nonumber\\
&\le
\frac14C_1\int_{\Omega}|
D^2v|^2(|D v |^2+\ez)^{ \gz+\beta }\xi^2\,dx\nonumber\\
&\quad+ C \beta^2\int_{\Omega}((|Dv|^2+\ez)^{\frac \gz 2}\bdz^N_{p,\ez}v)^2(|D v |^2+\ez)^{\bz}\xi^2\,dx.
\end{align}

For the second term of right-hand side in \eqref{sd-eq3},
by $ v =0$ on $\partial\Omega$ we can employ \cite[Equation (3, 1, 1, 2)]{gr85} to get
\begin{align*}
\big|(D^2 v D v -\bdz  v D v )\cdot \nu\big|
\le |D v |^2|\mathcal{B}|\quad{\rm on}\quad \partial\Omega.
\end{align*}
This infers that
  \begin{align}\label{sd-eq5}
&\int_{\partial \Omega}(|D v |^2+\ez)^{ \gz+\beta }(D^2 v D v -\bdz  v D v )\cdot \nu\xi^2\,d\mathcal{H}\nonumber\\
&\quad
\le \int_{\partial \Omega}|\mathcal{B}|(|D v |^2+\ez)^{ \gz+\beta }|D v |^2\xi^2\,d\mathcal{H}^{n-1}.
\end{align}

 For the first term of the right-hand side in \eqref{sd-eq3}, a Young's inequality
 implies that
   \begin{align}\label{sd-eq6}
&-2\int_{\Omega}\xi(|D v |^2+\ez)^{ \gz+\beta  }(D^2 v D v -\bdz  v D v )
\cdot D\xi\,dx\nonumber \\
&\le\frac 14 C_1\int_{\Omega}|D^2 v |^2(|D v |^2+\ez)^{ \gz+\beta }
\xi^2\,dx+C\int_{\Omega}(|D v |^2+\ez)^{ \gz+ \bz +1}
|D\xi|^2\,dx.
\end{align}

Combining \eqref{sd-eq2}-\eqref{sd-eq6}, we conclude the desired \eqref{sd-eq7}.

\end{proof}

Now we prove Lemma \ref{sd-es}.
\begin{proof}
[Proof of Lemma \ref{sd-es}]

(i)
Multiplying both sides
 of the inequality \eqref{key-in} by $(|D v |^2+\ez)^{\bz}$ with $\bz\ge 0$
and  integrating   over $\Omega$, we obtain
 \begin{align}\label{sd-eq1}
&C_1
\int_{\Omega}|D^2 v |^2(|D v |^2+\ez)^{\gz+\beta }
 \,dx\nonumber \\
&\le C_2
\int_{\Omega} ((|Dv|^2+\ez)^{\frac \gz 2}\bdz^N_{p,\ez}v)^2 (|D v |^2+\ez)^{\bz}
 \,dx\nonumber\\
&\quad+
\int_{\Omega}{\rm div}\left((|D v |^2+\ez)^{\gz}(D^2 v D v -\bdz  v D v )\right)(|D v |^2+\ez)^{\bz} \,dx.
\end{align}
Via integration by parts for the second term of right-hand side we get
  \begin{align*}
&\int_{\Omega}{\rm div}\left((|D v |^2+\ez)^{\gz}(D^2 v D v -\bdz  v D v )\right) (|D v |^2+\ez)^{\bz} \,dx\nonumber \\
&=  \int_{\partial \Omega}(|D v |^2+\ez)^{\gz+\bz}(D^2 v D v -\bdz  v D v )\cdot \nu\,d\mathcal{H}^{n-1}\nonumber \\
&\quad -\beta \int_{\Omega}[|D^2vDv|^2-\bdz v\bdz_\fz v](|D v |^2+\ez)^{ \gz+\bz-1} \,dx.
\end{align*} Since  $\Omega$ is convex,  it is known that
$$(D^2 v D v -\bdz  v D v )
\cdot \nu\le0 \quad {\rm on}\quad \partial\Omega.$$
Observe that $\bdz^N_{p,\ez} v=\bdz v+(p-2)\bdz^N_{\fz,\ez}v$.
Then Young's inequality with any $\eta>0$ and $\bz\ge 0$ leads to
\begin{align*}
& \beta \int_{\Omega}[|D^2vDv|^2-\bdz v\bdz_\fz v](|D v |^2+\ez)^{\gz+\bz-1}\,dx\nonumber\\
&=-\beta \int_{\Omega}[|D^2vDv|^2+(p-2)\frac{(\bdz_\fz v)^2}{
|Dv|^2+\ez}](|D v |^2+\ez)^{\gz+\bz-1}\,dx\nonumber\\
&\quad-\beta \int_{\Omega}(|Du^{\ez}|^2+\ez)^{\frac \gz 2}\bdz^N_{p,\ez}u^{\ez}\bdz_\fz v(|D v |^2+\ez)^{\gz+\bz-1}\,dx\\
&\le
\eta\int_{\Omega}|
D^2v|^2(|D v |^2+\ez)^{\gz+\bz}\,dx\\
&\quad\quad+\eta^{-1}\beta ^2\int_{\Omega}((|Du^{\ez}|^2+\ez)^{\frac \gz 2}\bdz^N_{p,\ez}u^{\ez})^2(|D v |^2+\ez)^{\bz}\,dx.
\end{align*}
Let $\eta=\frac {C_1} 2$. We therefore obtain
 \begin{align*}
& \int_{\Omega}{\rm div}\left((|D v |^2+\ez)^{\gz}(D^2 v D v -\bdz  v D v )\right) (|D v |^2+\ez)^{\bz} \,dx\nonumber \\
&\le \frac{C_1}2
\int_{\Omega}|D^2 v |^2(|D v |^2+\ez)^{\gz+\bz}
 \,dx\nonumber \\
&\quad + C (1+\bz^2)
\int_{\Omega} ((|Dv|^2+\ez)^{\frac \gz 2}\bdz^N_{p,\ez}v)^2 (|D v |^2+\ez)^{\bz}
 \,dx.
\end{align*}
Combining together \eqref{sd-eq1} we get the desired result.

(ii) It suffices to bound the last term of right-hand side in
\eqref{sd-eq7}. By an argument of \cite{cm19}, we can
suppose that  $\xi\in C^\fz_c(B_r(x_0))$ for
 some  $x_0\in \partial \Omega$ and  $0<r<R_\Omega $. Here $R_{\Omega}$ is given by
Lemma \ref{trace-sob}.
It follows by Lemma \ref{trace-sob} that
\begin{align*}
 &  \int_{\partial\Omega }(|D v |^2+\ez)^{ \gz+\bz}|D v |^2|\mathcal{B}
|\xi^2\,d\mathcal{H}^{n-1} \\
&\le C\Psi_\Omega(r)\int_{ \Omega\cap B_r(x_0)}
|D[(|D v |^2+\ez)^{ \frac{\gz+\bz}2}D v\xi ]|^2\,dx
\end{align*}
for constant $C=C(L_{\Omega})$. Then we choose $$ \dz< \frac {C_1}{C4(\gz+\bz+1)^2}$$
such that $ \Psi_{\Omega}(r)\le \dz$ for some $0<r<R_{\Omega}$. Thus
 $$
 C_1 \Psi_\Omega(r)\le  \frac {C_1}{4(\gz+\bz+1)^2}.$$
This further leads to
\begin{align*}
 &  \int_{\partial\Omega }(|D v |^2+\ez)^{ \gz+\bz}|D v |^2|\mathcal{B}
|\xi^2\,d\mathcal{H}^{n-1} \\
&\le \frac {C_1}{4(\gz+\bz+1)^2}
\int_{\Omega}|D[(|D v |^2+\ez)^{\frac{ \gz+\beta}2 }D v]|^2
\xi^2\,dx+ \frac {C_1}{4(\gz+\bz+1)^2}\int_{\Omega}(|D v |^2+\ez)^{ \gz+\bz+1  }|D\xi|^2\,dx\\
&\le \frac {C_1}{4}
\int_{\Omega}|D^2v|^2(|D v |^2+\ez)^{ \gz+\beta }
\xi^2\,dx+ \frac {C_1}{4(\gz+1)^2}\int_{\Omega}(|D v |^2+\ez)^{ \gz+\bz+1  }|D\xi|^2\,dx.
\end{align*}
Plugging this in \eqref{sd-eq7} one gets

 \begin{align}\label{sd-eq8}
&\frac 14C_1
\int_{\Omega}|D^2 v |^2(|D v |^2+\ez)^{ \gz+\bz}
\xi^2\,dx\nonumber \\
&\le C (1+\bz^2)
\int_{\Omega} (\Delta_{p,\ez}^Nv)^2(|D v |^2+\ez)^{\gz+\bz}\xi^2\,dx+C \int_{\Omega}(|D v |^2+\ez)^{ \gz+\bz+1 }|D\xi|^2\,dx.
\end{align}

Next let $\{B_{r_k}\}_{1\le k\le  N}$ be a  covering of $\overline{\Omega}$ by balls $B_{r_k}$, with $r_{\Omega}/4\le r_k\leq r_{\Omega}$ for some $r_{\Omega}\in (0,1)$,
such that either $B_{r_k}$ is centered on $\partial\Omega$, or $B_{r_k} \Subset\Omega$.
Note that the covering can be chosen in the way that the multiplicity $N$ of overlapping among the balls $B_{r_k}$ depends only on $n$.
Let $\{\psi_k\}_{k\in N}$ be a family of functions such that $\psi_{k}\in C^\infty_c(B_{r_k})$ and $|D\psi_k|\le C_4(r_{\Omega})^{-1}$, and that
$\{\psi_k^2\}_{1\le k\le  N}$ is a partition of unity associated with the covering $\{B_{r_k}\}_{k\in N}$.  By applying inequality \eqref{sd-eq8}  with $\xi=\psi_k$ for each $k$, and summing the resulting inequalities, one obtains
\begin{align*}
&\int_{\Omega}|D^2 v |^2(|D v |^2+\ez)^{ \gz+\bz}\,dx\nonumber\\
&\le C(1+\bz^2)
\int_{\Omega} ((|Dv|^2+\ez)^{\frac\gz2}\Delta_{p,\ez}^Nv)^2(|Dv|^2+\ez)^{ \bz}\,dx+C\int_{\Omega}(|D v |^2+\ez)^{ \gz+\bz+ 1}
\,dx,
\end{align*}
where   $C=C(p,n,\gz,L_{\Omega},d_{\Omega})$. Now we use a Sobolev inequality as in
\cite[Lemma 7.2]{accfm} to get
\begin{align*}
C\int_{\Omega}(|D v |^2+\ez)^{ \gz+\bz+ 1}
&\le \frac {1} {16(\gz+1+\bz)^2}\int_{\Omega}|D(|D v |^2+\ez)^{ \frac{\gz+\bz+ 1}2}|^2\,dx\\
&\quad+C(\bz^2+1)
\left(\int_{\Omega}(|D v |^2+\ez)^{ \frac{\gz+\bz+ 1}2}
\,dx\right)^2\\
&\le \frac 12\int_{\Omega}|D^2 v |^2(|D v |^2+\ez)^{ \gz+\bz}\,dx\\
&\quad +C(\bz^2+1)
\left(\int_{\Omega}(|D v |^2+\ez)^{ \frac{\gz+\bz+ 1}2}
\,dx\right)^2
\end{align*}
for some constant $C=C(p,n,\gz,L_{\Omega},d_{\Omega})$. Hence we finish this proof.

\end{proof}

 \subsection {Proof of Lemma \ref{Lq-es}}
We first recall that  the following a priori estimate given by Theorem 3.2 in \cite{cm17}. This is very useful in proving Lemma \ref{Lq-es}. Note that the following result also holds
for the generalized solution; we refer to \cite{cm19}.

\begin{lem}\label{L1-f}
Let $p>1$ and $\ez>0$.
Suppose  that  $v\in C^2(\overline \Omega)$ is  a solution to
\begin{align*}
-{\rm div}((|D v |^2+\ez)^{\frac {p-2}{2}}D v )=f \ \mbox{in $ \Omega$; $v=0$ on $\partial\Omega$}
\end{align*}
for some $f\in C^{2}(\overline \Omega)$.
One has
\begin{align*}
\int_{\Omega}(|D v |^2+\ez)^{\frac {p-2}2}|D v |\,dx
\le C\int_{\Omega}|f|\,dx,
\end{align*}
where $C=C(p,n,L_\Omega,d_\Omega)$.
\end{lem}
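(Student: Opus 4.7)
The plan is to follow the symmetric-rearrangement strategy used by Cianchi--Maz'ya in \cite{cm17}. Set $\Phi(\xi):=(|\xi|^2+\ez)^{(p-2)/2}\xi$, so the equation reads $-{\rm div}\,\Phi(Dv)=f$ and $|\Phi(Dv)|=(|Dv|^2+\ez)^{(p-2)/2}|Dv|$ is exactly the quantity to be bounded. The first step is a level-set identity: since $v\in C^2(\overline\Omega)$ vanishes on $\partial\Omega$, for each $t>0$ the super-level set $\{v>t\}$ is compactly contained in $\Omega$; integrating the equation over $\{v>t\}$ and applying the divergence theorem with outward unit normal $-Dv/|Dv|$ on $\{v=t\}$ yields
\begin{align*}
\int_{\{v=t\}}(|Dv|^2+\ez)^{(p-2)/2}|Dv|\,d\mathcal{H}^{n-1}=\int_{\{v>t\}}f\,dx,
\end{align*}
and similarly for $\{v<-t\}$. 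Adding the two, $\int_{\{|v|=t\}}(|Dv|^2+\ez)^{(p-2)/2}|Dv|\,d\mathcal{H}^{n-1}\le\|f\|_{L^1(\Omega)}$ for every $t>0$.

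Next, by the coarea formula applied with scalar weight $(|Dv|^2+\ez)^{(p-2)/2}$,
\begin{align*}
\int_\Omega(|Dv|^2+\ez)^{(p-2)/2}|Dv|\,dx=\int_0^\infty\int_{\{|v|=t\}}(|Dv|^2+\ez)^{(p-2)/2}\,d\mathcal{H}^{n-1}\,dt.
\end{align*}
On each level set I would apply Cauchy--Schwarz in the split form $(|Dv|^2+\ez)^{(p-2)/2}=[(|Dv|^2+\ez)^{(p-2)/2}|Dv|]^{1/2}\cdot[(|Dv|^2+\ez)^{(p-2)/2}/|Dv|]^{1/2}$; the first factor is controlled by $\|f\|_{L^1}^{1/2}$ via the first step, while the second factor will be estimated through the distribution-function identity $-\mu'(t)=\int_{\{|v|=t\}}|Dv|^{-1}\,d\mathcal{H}^{n-1}$ (with $\mu(t)=|\{|v|>t\}|$), combined with the relative isoperimetric inequality on the bounded Lipschitz domain $\Omega$ --- this is where $L_\Omega$ and $d_\Omega$ enter the constant $C$. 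Packaging these per-level-set bounds into a Talenti--Cianchi-type differential inequality for the decreasing rearrangement $v^*$, and integrating in $t$ from $0$ up to $\|v\|_{L^\infty}$ (itself controlled by the standard Talenti comparison in terms of $\|f\|_{L^1}$, $n$, and $|\Omega|$), closes the estimate.

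The main obstacle is keeping the constant uniform in $\ez$. A naive pointwise decomposition $(|Dv|^2+\ez)^{(p-2)/2}|Dv|\lesssim|Dv|^{p-1}+\ez^{(p-1)/2}$ combined with a Boccardo--Gallou\"et truncation argument only produces $\le C\|f\|_{L^1}+C\ez^{(p-1)/2}|\Omega|$, whose additive defect cannot be absorbed when $\|f\|_{L^1}$ is small. The rearrangement route above sidesteps this because the level-set identity already bounds the full flux $(|Dv|^2+\ez)^{(p-2)/2}|Dv|$ on each slice $\{|v|=t\}$ by $\|f\|_{L^1}$ with no $\ez$-dependent additive term, and this $\ez$-independence is preserved through the coarea formula and the rearrangement comparison.
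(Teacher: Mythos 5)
The paper does not actually prove Lemma~\ref{L1-f}: it is stated as a recalled result, citing Theorem~3.2 of Cianchi--Maz'ya \cite{cm17} (with the remark that \cite{cm19} covers the generalized-solution case). So there is no proof in the paper to compare against. You have correctly identified the reference and the general strategy (level-set flux identity, coarea formula, isoperimetric inequality, rearrangement comparison), which is indeed how the cited result is established. However, as written, your sketch has two concrete gaps that prevent it from closing.

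First, the weight does not drop out of your Cauchy--Schwarz step. After the split, the second factor is
\begin{align*}
\left(\int_{\{|v|=t\}}\frac{(|Dv|^2+\ez)^{(p-2)/2}}{|Dv|}\,d\mathcal{H}^{n-1}\right)^{1/2},
\end{align*}
which is \emph{not} $(-\mu'(t))^{1/2}$: the extra weight $(|Dv|^2+\ez)^{(p-2)/2}$ is unbounded in $|Dv|$ when $p>2$, and blows up like $\ez^{(p-2)/2}$ as $\ez\to0$ when $p<2$. The coarea identity controls only $\int_{\{|v|=t\}}|Dv|^{-1}\,d\mathcal{H}^{n-1}=-\mu'(t)$, and the relative isoperimetric inequality controls only $\mathcal{H}^{n-1}(\{|v|=t\})$; neither gives a handle on the weighted surface integral. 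The argument in \cite{cm17} avoids this split altogether; it exploits that the flux $\Phi(s)=(s^2+\ez)^{(p-2)/2}s$ is increasing and works with $\Phi^{-1}$ to derive a pointwise differential inequality for the rearrangement, rather than trying to bound each level-set slice by a product of two quantities of which only one is usable.

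Second, even for $p=2$ your scheme reduces to bounding $\int_0^{\|v\|_\infty}(-\mu'(t))^{1/2}\,dt$, which forces you to control $\|v\|_{L^\infty}$. Your parenthetical claim that the Talenti comparison controls $\|v\|_\infty$ by $\|f\|_{L^1}$, $n$, $|\Omega|$ is false in the regime that matters here: for the $p$-Laplacian with $f\in L^1$ only, the radial comparison function has $w(0)<\infty$ exactly when $p>n$; for $1<p\le n$ it diverges. Since Lemma~\ref{L1-f} is claimed for all $p>1$ with a constant independent of $\ez$, you cannot route the proof through an $L^\infty$ bound on $v$. A correct proof must be organized so that only $\Lambda(t)=\int_{\{|v|>t\}}|f|\,dx$ enters the level-set estimate and the integration in $t$ closes without ever invoking a bound on the range of $v$; this is precisely what the rearrangement ODE in \cite{cm17} accomplishes. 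If you want a self-contained proof here you would need to reconstruct that argument, not the Cauchy--Schwarz-per-level-set version you sketched.
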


We are ready to prove Lemma \ref{Lq-es}.
\begin{proof}[Proof of Lemma \ref{Lq-es}]
First, by a Poincar\'e-Sobolev inequality in \cite[Theomre 3.6]{cd24} we have
\begin{align*}
 \left(\int_{\Omega}
\big|
(|D v |^2+\ez)^{\frac{\gz+1}{2} }-\mint{-}_{\Omega}(|D v |^2+\ez)^{\frac{\gz+1}{2} }
\,dz\big|^{2^{\star}}\,dx
\right)^{\frac{1}{2^{\star}}}
&\le C
\left(\int_{\Omega}|D(|D v |^2+\ez)^{ \frac{\gz+1}2}|^2\,dx\right)^{\frac 12},
\end{align*}
where $C=C(n,d_{\Omega},L_{\Omega})$. This further leads to
\begin{align}\label{lq-eq1}
 \left(\int_{\Omega}
\big|
(|D v |^2+\ez)^{\frac{\gz+1}{2} 2^{\star}}\,dx
\right)^{\frac{1}{2^{\star}}}
&\le C
\left(\int_{\Omega} (|Dv|^2+\ez)^{\gz}|D^2u|^2\,dx\right)^{\frac 12}\nonumber\\
&\quad+C\int_{\Omega}|Dv|^{\gz+1}\,dx+C\ez^{\frac{\gz+1}2}.
\end{align}
Now it reduce to estimate the right-hand side of inequality \eqref{lq-eq1}.
 We will divide the following two cases.

(i) Assume that $\Omega$ is convex. For the first term of right-hand side in
\eqref{lq-eq1}, we apply \eqref{sd-es1} in Lemma \ref{sd-es} with $\bz=0$
to obtain
\begin{align}\label{lq-eq2}
\left(\int_{\Omega} (|Dv|^2+\ez)^{\gz}|D^2v|^2\,dx\right)^{\frac 12}
\le C\left(\int_{\Omega} (|Dv|^2+\ez)^{\gz}(\bdz^N_{p,\ez}v)^2\,dx\right)^{\frac 12}.
\end{align}

On the other hand, write
$$-{\rm div}((|D v |^2+\ez)^{\frac \gz 2}D v )
= (|Dv|^2+\ez)^{  \gz/2 }
 \bdz^N_{p,\ez}v +(p-2-\gz)(|D v |^2+\ez)^{\gz/2}\bdz^N_{\fz,\ez} v .
$$

Thus by Lemma \ref{L1-f} and H\"older inequality we have
\begin{align*}
&\int_{\Omega}(|D v |^2+\ez)^{\frac \gz 2}|D v | \,dx\\
&\le C\int_{\Omega}\big| (|Dv|^2+\ez)^{  \gz/2 }
 \bdz^N_{p,\ez}v +(p-2-\gz)(|D v |^2+\ez)^{\gz/2}\bdz^N_{\fz,\ez} v \big|\,dx \\
&\le C\int_{\Omega}  (|Dv|^2+\ez)^{  \gz/2 }|D^2v|  \,dx\\
&\le C\left(\int_{\Omega} (|Dv|^2+\ez)^{\gz}|D^2v|^2\,dx\right)^{\frac 12}.
\end{align*}
Noting that
\begin{align}\label{gz-eq1}
(|Dv|^2+\ez)^{\frac{\gz}2}|Dv|\ge |Dv|^{\gz+1}\quad \forall \gz>0
\end{align}
and
\begin{align}\label{gz-eq2}
|Dv|^{\gz+1}\le (|Dv|^2+\ez)^{\frac{\gz+1}2}&\le 2(|Dv|^2+\ez)^{\frac{\gz}2}|Dv|
+2(|Dv|^2+\ez)^{\frac{\gz}2}\ez^{\frac 12}\nonumber\\
&\le 2(|Dv|^2+\ez)^{\frac{\gz}2}|Dv|+2\ez^{\frac{\gz+1}2}\quad \forall -1<\gz<0.
\end{align}
Now combing \eqref{lq-eq1} and \eqref{lq-eq2} we deduce
$$\left(\int_{\Omega} (|Dv|^2+\ez)^{\gz}|D^2v|^2\,dx\right)^{\frac 12}+\int_{\Omega}|Dv|^{\gz+1}\,dx \le  C\left(\int_{\Omega} (|Dv|^2+\ez)^{\gz}(\bdz^N_{p,\ez}v)^2\,dx\right)^{\frac 12}
+C\ez^{\frac{\gz+1}2}.$$
This proves \eqref{lq-es1}.

(ii) Assume that $\Omega$ is not convex. Let $\lim_{r\to 0^+}\Psi_{\Omega}(r)\le \dz$ where
$\dz$ is given by Lemma \ref{Lq-es}.
Thanks to \eqref{lq-eq1} and
\eqref{sd-es2} with $\bz=0$ in Lemma \ref{Lq-es}, we only need to
bound the second term of right-hand side in \eqref{lq-eq1}. Namely,
it remains to estimate the upper bound of $\||Dv|^{\gz+1}\|_{L^1(\Omega)}$.
We also consider the following two subcases.

$\bullet$ Assume that $\gz>p-2$. We show that
\begin{align*}
\left(\int_{\Omega}|D v |^{\gz+1}\,dx\right)^2
\le C\left[\int_{\Omega} (|Dv|^2+\ez)^{  \gz }
(\bdz^N_{p,\ez}v)^2\,dx+
\left(\int_{\Omega}| v |^2\,dx\right)^{\gz+1}+\ez^{\gz+1}\right].
\end{align*}

Write  \begin{align*}
-{\rm div}((|D v |^2+\ez)^{\frac \gz 2}D v )
= (|Dv|^2+\ez)^{  \gz/2 }
 \bdz^N_{p,\ez}v +(p-2-\gz)(|D v |^2+\ez)^{\gz/2}\bdz^N_{\fz,\ez} v .
\end{align*}
Multiplying both sides by $ v $ and integrating
over on $\Omega$,  via integration by parts we obtain
\begin{align}\label{lq-eq3}
&\int_{\Omega}(|D v |^2+\ez)^{\frac \gz 2}|D v |^2\,dx\nonumber\\
&\quad= \int_{\Omega}-{\rm div}((|D v |^2+\ez)^{\frac \gz 2}D v ) v\,dx
\nonumber \\
&\quad=\int_{\Omega}v (|Dv|^2+\ez)^{  \gz/2 }
 \bdz^N_{p,\ez}v \,dx+(p-2-\gz)
\int_{\Omega} v (|D v |^2+\ez)^{\gz/2}\bdz^N_{\fz,\ez} v \,dx.
\end{align}
where  we used the fact that $ v =0$ on $\partial\Omega$.

 An application of H\"older's inequality  leads to
\begin{align*}
\int_{\Omega}v (|Dv|^2+\ez)^{  \gz/2 }
 \bdz^N_{p,\ez}v \,dx\le \|(|Dv|^2+\ez)^{  \gz/2 }
 \bdz^N_{p,\ez}v\|_{L^2(\Omega)}\| v \|_{L^2(\Omega)}.
\end{align*}
 Moreover, by H\"older's inequality and also Lemma \ref{sd-es}  we obtain
 \begin{align*}
&(p-2-\gz)
\int_{\Omega} v (|D v |^2+\ez)^{\gz/2}\bdz^N_{\fz,\ez} v \,dx\\
&\le
C(p,\gz)\left(\int_{\Omega}(|D v |^2+\ez)^{\gz}|D^2 v |^2\,dx\right)^{\frac 12}
\left(\int_{\Omega}| v |^2\,dx\right)^{\frac 12}\\
 &\le C   \| v \|_{L^2(\Omega)}\left\{ \|(|Dv|^2+\ez)^{  \gz/2 }
 \bdz^N_{p,\ez}v\|_{L^2(\Omega)}   +  \| |D v|^{\gz+1}\|_{L^1(\Omega)}+\ez^{\frac{\gz+1}2} \right\}.
\end{align*}

Thus it follows by \eqref{lq-eq3} that
\begin{align*}
&\int_{\Omega} (|D v |^2+\ez)^{\frac \gz 2}|D v |^2\,dx\\
&\le C\| v \|_{L^2(\Omega)} \left\{  \|(|Dv|^2+\ez)^{  \gz/2 }
 \bdz^N_{p,\ez}v\|_{L^2(\Omega)}+ \| |D v|^{\gz+1}\|_{L^1(\Omega)}+\ez^{\frac{\gz+1}2}\right\}.
\end{align*}

By \eqref{gz-eq1}-\eqref{gz-eq2}, from above we conclude that
\begin{align*}
\int_{\Omega} |Dv|^{\gz+2}\,dx
\le C\| v \|_{L^2(\Omega)} \left\{  \|(|Dv|^2+\ez)^{  \gz/2 }
 \bdz^N_{p,\ez}v\|_{L^2(\Omega)}+ \| |D v|^{\gz+1}\|_{L^1(\Omega)}+\ez^{\frac{\gz+1}2}\right\}+\ez^{\frac{\gz+2}2}.
\end{align*}
From this, by H\"older's inequality and
 Young's inequality, one gets
\begin{align*}
&\left(\int_{\Omega}|D v |^{\gz+1}\,dx\right)^2
 \\
&\le C\left(\int_{\Omega}|D v |^{\gz+2}\,dx\right)
^{\frac{2(\gz+1)}{\gz+2}}\nonumber\\
&\le C[ \| v \|_{L^2(\Omega)} ^{2(\gz+1)}]^{\frac1{\gz+2}}  \left\{  \|(|Dv|^2+\ez)^{  \gz/2 }
 \bdz^N_{p,\ez}v\|_{L^2(\Omega)}^2+ \| |D v|^{\gz+1}\|^2_{L^1(\Omega)}+\ez^{{\gz+1} } \right\}^{\frac { \gz+1 }{\gz+2}}+\ez^{ {\gz+1} }\nonumber\\
 &\le\frac12\{ \|(|Dv|^2+\ez)^{  \gz/2 }
 \bdz^N_{p,\ez}v\|^2_{L^2(\Omega)}+  \| |D v|^{\gz+1} \|^2_{L^1(\Omega)} +\ez^{ {\gz+1} }\} +C\| v \|^{2(\gz+1)}_{L^2(\Omega)} +\ez^{ {\gz+1} }.
\end{align*}
Absorbing the term $ \frac12 \| |D v|^{\gz+1} \|^2_{L^1(\Omega)}$ in right-hand side to the left-hand side,
 one has
\begin{align*}
\left(\int_{\Omega}|D v |^{\gz+1}\,dx\right)^2
\le C\left[\int_{\Omega} (|Dv|^2+\ez)^{  \gz }
(\bdz^N_{p,\ez}v)^2\,dx+
\left(\int_{\Omega}| v |^2\,dx\right)^{\gz+1}+\ez^{\gz+1}\right].
\end{align*}
This proves \eqref{lq-es2}.

$\bullet$ Assume that $\gz\le p-2$. Recalling that \eqref{lq-es1} gives us
\begin{align*}
 \left(\int_{\Omega}(|D v |^2+\ez)^{\frac{\gz+1}{2}2^{\star}}
\right)^{\frac{1}{2^{\star}}}
& \le C\left(\int_{\Omega} (|Dv|^2+\ez)^{  \gz }
|D^2v|^2\,dx\right)^{\frac 12}
+ C\int_{\Omega} |D v |^ {\gz+1} \,dx   +C\ez^{\frac{\gz+1}2}.
\end{align*}
By Lemma \ref{sd-es}, one has
 \begin{align*}
&\left(\int_{\Omega}|Dv|^{2^{\star}(\gz+1)}\,dx\right)^{\frac 1{2^{\star}(\gz+1) }}\\
&\quad
\le \bar C\left[\left(\int_{\Omega} (|Dv|^2+\ez)^{  \gz }
 (\bdz^N_{p,\ez}v)^2\,dx\right)^{\frac1{2(\gz+1)}}+
 \left(\int_{\Omega}|Dv|^{\gz+1}\,dx\right)^{\frac1{\gz+1}} +\ez^{\frac12}  \right].
\end{align*}
To get \eqref{lq-es3}, it suffices to show
\begin{align} \label{lq-eqxx}
 &\bar C\left(\int_{\Omega}|D v |^{\gz+1}\,dx\right)^{\frac 1{\gz+1}}\nonumber\\
 &\le \frac 12\left(\int_{\Omega}|D v |^{(\gz+1)2^\star}
\right)^{\frac1{2^\star(\gz+1)}}+C\left(\int_{\Omega} |[|D v |^2+\ez]^{\gamma}(\Delta^N_{p,\ez})^2\,dx
\right)^{\frac 1{2(\gz+1)}}+C\ez^{\frac 12}.
\end{align}

 To this end, write
\begin{align*}
-{\rm div}((|D v |^2+\ez)^{\frac {p-2}{2}}D v )=-(|D v |^2+\ez)^{\frac {p-2 }{2}}\Delta^N_{p,\ez}v.
\end{align*}
Since  $ v =0$ on $\partial\Omega$,  we apply Lemma \ref{L1-f} to get
\begin{align*}
\int_{\Omega}(|D v |^2+\ez)^{\frac {p-2}2}|D v |\,dx
\le C\int_{\Omega}|(|D v |^2+\ez)^{\frac {p-2 }{2}}\Delta^N_{p,\ez} v|\,dx,
\end{align*}
where $C=C(p,n,\Omega)$.
By $ \gz<p-2$,   H\"older inequality and   $$ |D v|^{p-1}\le 2 (|D v |^2+\ez)^{\frac {p-2}2}|D v|+2\ez^{\frac{p-1}2},$$  one has
\begin{align*}
\bar C\left(\int_{\Omega}|D v |^{\gz+1}\,dx\right)^{\frac 1{\gz+1}}
&d\le C\left(\int_{\Omega}|D v |^{p-1}\,dx\right)^{\frac 1{p-1}}\nonumber\\
&\quad\le C\left(\int_{\Omega}[|D v |^2+\ez]^{\frac {p-2 }{2}}\Delta^N_{p,\ez} v|\,dx\right)^{\frac 1{p-1}}+C\ez^{\frac {1}2}.
\end{align*}
By  H\"older inequality again,
\begin{align*}
 &\bar C\left(\int_{\Omega}|D v |^{\gz+1}\,dx\right)^{\frac 1{\gz+1}}\nonumber\\
&\quad\le C\left(\int_{\Omega}[|D v |^2+\ez]^{\gamma}(\Delta^N_{p,\ez} v)^2\,dx\right)^{\frac 1{2(p-1)} }
\left[\left(\int_{\Omega}|D v |^{2(p-2-\gz)}\,dx\right)^{\frac 1{2(p-1)}}
+\ez^{\frac {p-2-\gz}{2(p-1)}}\right]+C\ez^{\frac {1}2}.
\end{align*}

Since
$$p-2> \gz>-1+(p-1)\frac{n-2}{2(n-1)},$$
then
 $$0< 2(p-2-\gz)\le (\gz+1)2^\star.$$
Indeed, in the case $n=2$, $2^\star$ denotes  any $ q\ge \frac{2(p-2-\gz)}{1+\gz}$,
and hence
$$2(p-2-\gz)= (1+\gz)\frac{2(p-2-\gz)}{1+\gz} \le (1+\gz)2^\star.$$
If $n\ge 3$,  by $2^\star=\frac {2n}{n-2}$,  a direction calculation gives
\begin{align*}
(\gz+1)\frac{2n}{n-2}-2(p-2-\gz)
&=\gz\left(\frac{2n}{n-2}+2\right)+\frac{2n}{n-2}-2(p-2)\\
&=\gz \frac{2n+2(n-2)}{n-2}+\frac{2n+2(n-2)}{n-2}-2(p-1)\\
&=\frac{4(n-1)}{n-2}
\left[\gz-\left(-1+(p-1)\frac{n-2}{2(n-1)}\right)\right]>0.
\end{align*}

Since  $ \frac{2^\star (\gz+1)}{ 2(p-2-\gz) }\ge 1$,
applying H\"older inequality  we have
\begin{align*}
\left(\int_{\Omega}|D v |^{2(p-2-\gz)}\,dx\right)^{\frac 1{2(p-1)}}
\le C\left(\int_{\Omega}|Du|^{(\gz+1)2^\star}\,dx\right)^{\frac{1}{2^\star (\gz+1)}
  \frac{p-2-\gz}{p-1}}.
\end{align*}
Thus
\begin{align*}
 &\bar C\left(\int_{\Omega}|D v |^{\gz+1}\,dx\right)^{\frac 1{\gz+1}}\nonumber\\
&\quad\le C\left(\int_{\Omega}[|D v |^2+\ez]^{\gamma}(\Delta^N_{p,\ez})^2\,dx\right)^{\frac 1{2(p-1)} }
\left[\left(\int_{\Omega}|Du|^{(\gz+1)2^\star}\,dx\right)^{\frac{1}{2^\star (\gz+1)}
  \frac{p-2-\gz}{p-1}}
+\ez^{\frac {p-2-\gz}{2(p-1)}}\right]+C\ez^{\frac {1}2}.
\end{align*}
Noting that $ 1<\frac{p-1}{p-2-\gz}<\fz$ and its conjugate exponent is
$ \frac{p-1}{1+\gz} $, we then employ Young's inequality to
get \eqref{lq-eqxx} as desired.

\end{proof}

\section{Some   H\"older  estimates}

In this section, we assume that the following:
\begin{itemize}
\item[$\bullet$] $\Omega$ is smooth bounded domain
in $\rr^n$ with $n\ge 2$.
\item[$\bullet$] $v\in C^\fz(\overline \Omega)$ and
$v=0$ on $\partial \Omega$.
\item[$\bullet$] Let $\ez\in (0,1]$. Let $p>1$ and let $\gz>-1$ satisfy
\eqref{pgz}.
\end{itemize}
We will focus on the H\"older estimate. Recall that the norm of
H\"older space $C^{0,\az}(\overline \Omega)$ with $\az\in (0,1)$ is defined by
$$\|w\|_{C^{0,\az}(\overline \Omega)}=\max_{\overline \Omega}|w|+\sup_{x\neq y\in\overline\Omega}\frac{|w(x)-w(y)|}{|x-y|^\alpha}<\fz$$
for all $w\in C^{0,\az}(\overline \Omega)$.

In the case,  $2^{\star} (\gz+1)>n$, from Lemma \ref{Lq-es} and Morrey's inequality for Sobolev space, we  conclude the following.

\begin{lem}\label{holder-1}
Let $2^{\star} (\gz+1)>n$ and 
write $\az=1-\frac{n}{2^{\star} (\gz+1)}$.
\begin{itemize}

\item[$\bullet$] If $\Omega$ is convex, then
$$\|v\|_{C^{0,\az}(\overline \Omega)}\le C\|(|Dv|^2+\ez)^{ \gz/2 }\bdz^N_{p,\ez}v\|^{\frac 1{\gz+1}}_{L^2(\Omega)}
+C\ez^{\frac 12}.$$

\item[$\bullet$]  Under the same conditions as in Lemma
\ref{Lq-es} (ii).  If $\gz\le p-2$, then
$$\|v\|_{C^{0,\az}(\overline \Omega)}\le C\|(|Dv|^2+\ez)^{  \gz/2 }\bdz^N_{p,\ez}v\|^{\frac 1{\gz+1}}_{L^2(\Omega)}+C\ez^{\frac 12}.$$

\item[$\bullet$]  Under the same conditions as in Lemma
\ref{Lq-es} (ii).  If $\gz>p-2$, then
$$\|v\|_{C^{0,\az}(\overline \Omega)}\le C\left[\|(|Dv|^2+\ez)^{  \gz/2 }\bdz^N_{p,\ez}v\|^{\frac 1{\gz+1}}_{L^2(\Omega)}
+\|v\|_{L^2(\Omega)}\right]+C\ez^{\frac 12}.$$
\end{itemize}

All above constant $C$ only depend on $p$, $n$,
$\gz$, $L_{\Omega}$ and $d_{\Omega}$.
\end{lem}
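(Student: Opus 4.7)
The plan is to combine the higher-integrability bound for $Dv$ from Lemma~\ref{Lq-es} with Morrey's embedding. By assumption $q := 2^{\star}(\gz+1) > n$, so $W^{1,q}(\Omega) \hookrightarrow C^{0,\alpha}(\overline\Omega)$ with $\alpha = 1 - n/q$, and Morrey's inequality gives
\begin{align*}
\|v\|_{C^{0,\alpha}(\overline\Omega)} \le C \|v\|_{W^{1,q}(\Omega)}
\end{align*}
for some $C = C(n, L_\Omega, d_\Omega)$. Since $v = 0$ on $\partial\Omega$, the Poincar\'e inequality bounds $\|v\|_{L^q(\Omega)}$ by $C\|Dv\|_{L^q(\Omega)}$, so altogether
\begin{align*}
\|v\|_{C^{0,\alpha}(\overline\Omega)} \le C \|Dv\|_{L^{2^\star(\gz+1)}(\Omega)}.
\end{align*}

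Now I would simply invoke Lemma~\ref{Lq-es} case by case. In the convex case, \eqref{lq-es1} yields the first bullet directly. When $\Omega$ is not convex with $\Psi_\Omega$ sufficiently small and $\gz \le p-2$, \eqref{lq-es3} gives the second bullet, while the range $\gz > p-2$ produces the extra $\|v\|_{L^2(\Omega)}$ term via \eqref{lq-es2}, proving the third bullet. The constants carry through with the same dependencies as in Lemma~\ref{Lq-es}, namely $C = C(p, n, \gz)$ in the convex case and $C = C(p, n, \gz, L_\Omega, d_\Omega)$ otherwise.

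No substantial obstacle is expected: the content of the lemma is essentially packaged inside Lemma~\ref{Lq-es} together with the standard Morrey embedding, and the $C^{0,\alpha}$ norm is a direct consequence once $Dv$ is known to be in $L^q$ for some $q > n$. The only minor point requiring care is to verify that the Poincar\'e / Morrey constants depend only on $n$, $L_\Omega$ and $d_\Omega$ for Lipschitz domains (a classical fact), so that the final constants in each bullet match the stated dependencies. With that verification the three bounds follow immediately from the three parts of Lemma~\ref{Lq-es}.
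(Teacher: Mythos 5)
Your proposal is correct and follows exactly the route the paper takes: the paper's own justification is a one-line remark that Lemma~\ref{holder-1} follows "from Lemma~\ref{Lq-es} and Morrey's inequality for Sobolev space," which is precisely the Poincar\'e-plus-Morrey argument you spell out. Your parenthetical claim that $C=C(p,n,\gamma)$ in the convex case should be amended — the Morrey and Poincar\'e constants bring in $L_\Omega$ and $d_\Omega$, which is why the lemma's statement lists those dependencies for all three bullets — but you already flag this point yourself, so the argument is sound.
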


\begin{rem}\rm
Suppose that
$$(|Dv|^2+\ez)^{ \gz/2 }\bdz^N_{p,\ez}v\to
|Dv|^{\gz}\bdz^N_p v\quad{\rm in}\quad L^2(\Omega)
\quad{\rm as}\quad \ez\to 0.$$
Lemma \ref{holder-1} tells us that  the $C^{0,\az}$-norm of $v$ can be bounded
by the $L^2$-norm of $|Dv|^{\gz}\bdz^N_p v$ if $\Omega$ is convex and $2^{\star}(\gz+1)>n$.
However, if $2^{\star}(\gz+1)<n$ with $n\ge 3$ and $2^{\star}(\gz+1)=n$ with $n\ge 4$, this is not correct; we refer
to an counterexample in Proposition  \ref{example} at end of this section.

\end{rem}

In the case $2^{\star}(\gz+1)\le n$, via Moser's iteration  we also deduce the following
the H\"older estimate, where we use
$\|(|Dv|^2+\ez)^{  \gz/2 }\bdz^N_{p,\ez}v\| _{L^\infty(\Omega)}$ instead of
$\|(|Dv|^2+\ez)^{  \gz/2 }\bdz^N_{p,\ez}v\| _{L^2(\Omega)}$  in Lemma \ref{holder-1}.

\begin{lem}\label{holder-2}
Let $2^{\star} (\gz+1)\le n$ and let $k_0$ be a positive integer so that
$k_0>\ln \frac{n}{2^{\star} (\gz+1)}$. Write  $\az_0=1-\frac{n}{(2^{\star})^{k_0+1}(\gz+1)}$.

\begin{itemize}

\item[$\bullet$] If $\Omega$ is convex, then
$$\|v\|_{C^{0,\az_0}(\overline \Omega)}\le C\|(|Dv|^2+\ez)^{ \gz/2 }\bdz^N_{p,\ez}v\|^{\frac 1{\gz+1}}_{L^{\fz}(\Omega)}
+C\ez^{\frac 12}.$$

\item[$\bullet$]  Under the same conditions as in Lemma
\ref{Lq-es} (ii).  If $\gz>p-2$, then
$$\|v\|_{C^{0,\az_0}(\overline \Omega)}\le C\left[\|(|Dv|^2+\ez)^{  \gz/2 }\bdz^N_{p,\ez}v\|^{\frac 1{\gz+1}}_{L^{\fz}(\Omega)}
+\|v\|_{L^2(\Omega)}\right]+C\ez^{\frac 12}.$$

\item[$\bullet$]  Under the same conditions as in Lemma
\ref{Lq-es} (ii).  If $\gz\le p-2$, then
$$\|v\|_{C^{0,\az_0}(\overline \Omega)}\le C\|(|Dv|^2+\ez)^{  \gz/2 }\bdz^N_{p,\ez}v\|^{\frac 1{\gz+1}}_{L^{\fz}(\Omega)}+C\ez^{\frac 12}.$$
\end{itemize}

All above constants $C$ only depend on $k_0$, $p$, $n$,
$\gz$, $L_{\Omega}$ and $d_{\Omega}$.

\end{lem}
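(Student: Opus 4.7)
The strategy is a Moser-type iteration based on Lemma \ref{sd-es}, followed by Morrey's embedding at the last step. The $L^\infty$-bound on $(|Dv|^2+\ez)^{\gz/2}\bdz^N_{p,\ez}v$ is what makes the iteration close.

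First I would set up the base case. Lemma \ref{Lq-es} already yields
$ \|(|Dv|^2+\ez)^{1/2}\|_{L^{q_0}(\Omega)}$
controlled by the right-hand side of the inequality we aim to prove, where $q_0=2^{\star}(\gz+1)$; note that $\|(|Dv|^2+\ez)^{\gz/2}\bdz^N_{p,\ez}v\|_{L^2(\Omega)}\le|\Omega|^{1/2}\|(|Dv|^2+\ez)^{\gz/2}\bdz^N_{p,\ez}v\|_{L^\infty(\Omega)}$, so the base integrability comes for free.

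Next comes the iteration. Given $\bz\ge0$, Lemma \ref{sd-es} together with the identity
$|D[(|Dv|^2+\ez)^{(\gz+\bz+1)/2}]|^2\le C(\gz+\bz+1)^2|D^2v|^2(|Dv|^2+\ez)^{\gz+\bz}$
and the Sobolev embedding $W^{1,2}(\Omega)\hookrightarrow L^{2^{\star}}(\Omega)$ give
\begin{align*}
&\left(\int_\Omega(|Dv|^2+\ez)^{(\gz+\bz+1)2^{\star}/2}\,dx\right)^{2/2^{\star}}\\
&\quad\le C(1+\bz^2)(\gz+\bz+1)^2\|(|Dv|^2+\ez)^{\gz/2}\bdz^N_{p,\ez}v\|_{L^\infty(\Omega)}^2\int_\Omega(|Dv|^2+\ez)^{\bz}\,dx+\mathcal R(\bz),
\end{align*}
where $\mathcal R(\bz)$ collects, in the non-convex case, the extra term $(\int_\Omega(|Dv|^2+\ez)^{(\gz+\bz+1)/2}\,dx)^2$ from Lemma \ref{sd-es}(ii); this term will be absorbed via a standard interpolation/Young argument (exactly as in the proof of Lemma \ref{sd-es}(ii)) so as not to contaminate the iteration. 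Writing $q_k$ for the exponent of $(|Dv|^2+\ez)^{1/2}$ reached at step $k$, the choice $\bz_k=q_k/2$ yields the recursion $q_{k+1}=2^{\star}(\gz+1)+\tfrac{2^{\star}}{2}q_k$, which for $n\ge3$ grows geometrically by factor $2^{\star}/2=n/(n-2)>1$ (when $n=2$ the Sobolev exponent is arbitrary, so the iteration is even easier). Iterating $k_0$ times with $k_0$ as in the statement produces a $q_{k_0}$ with $q_{k_0}>n$, and the desired Hölder exponent $\az_0=1-n/q_{k_0}$ then arises from applying Morrey's embedding to the function $(|Dv|^2+\ez)^{1/2}\in L^{q_{k_0}}(\Omega)$, i.e.~to $v\in W^{1,q_{k_0}}(\Omega)$ with $v=0$ on $\pa\Omega$.

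For the case $\gz>p-2$ in the non-convex setting, the additional term $\|v\|_{L^2(\Omega)}^{\gz+1}$ from Lemma \ref{Lq-es}(ii) seeds the base step, and the same term appears in $\mathcal R(\bz)$ throughout the iteration. I would carry it along as an additive term at each step, using that the constants produced by Young's inequality at each iteration remain controlled by $C(k_0,p,n,\gz,L_\Omega,d_\Omega)$. The boundary value $v=0$ on $\pa\Omega$ feeds into the Sobolev-Poincaré step so that no lower-order correction survives in the convex case.

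The main obstacle is bookkeeping the $\bz$-dependent constants $C(1+\bz_k^2)(\gz+\bz_k+1)^2$ along the finite chain of $k_0$ iterations, and, in the non-convex case, absorbing the error term $\mathcal R(\bz)$ back into the left-hand side uniformly across steps without losing the dependence only on $k_0,p,n,\gz,L_\Omega,d_\Omega$. Once those book-keeping issues are settled, the final Morrey step is routine.
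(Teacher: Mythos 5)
Your overall strategy (Moser-type iteration on $(|Dv|^2+\ez)^{1/2}$ feeding a final Morrey step) is the same as the paper's, but there is a genuine gap in the iteration step. You assert that, in the convex case, ``no lower-order correction survives'' from the Sobolev step because $v=0$ on $\pa\Omega$. This is incorrect: the Sobolev--Poincar\'e inequality is applied to the function $g=(|Dv|^2+\ez)^{(\gz+\bz+1)/2}$, not to $v$, and $g$ is neither zero nor constant on $\pa\Omega$ (the tangential derivatives of $v$ vanish there, but the normal derivative does not). Consequently the lower-order term $\int_\Omega(|Dv|^2+\ez)^{(\gz+\bz+1)/2}\,dx$ is unavoidable in \emph{every} case, convex or not, and it is precisely this term that drives the recursion. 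Leaving it out is what lets you pick $\bz_k=q_k/2$ and iterate $\int(|Dv|^2+\ez)^{\bz}$; once the lower-order term is restored your chain carries two competing exponents, $\gz+\bz+1$ and $2\bz$, and the bootstrap no longer closes with your choice of $\bz_k$.

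The paper resolves this differently: it sets $\bz_k=(\gz+1)(2^\star)^k-(\gz+1)$, so $q_k=\gz+1+\bz_k=(\gz+1)(2^\star)^k$ and $q_{k+1}=2^\star q_k$. With this choice the lower-order Sobolev term is exactly $\int_\Omega(|Dv|^2+\ez)^{q_k/2}$, i.e.\ the quantity already bounded at the previous step, so \emph{that} is the term which gets iterated. The remaining term $M^2\int_\Omega(|Dv|^2+\ez)^{\bz}$ is not iterated at all; it is absorbed into the left-hand side by a H\"older inequality with exponents $\frac{2^\star(\gz+1+\bz)}{2\bz}$ and its conjugate, followed by a weighted Young inequality, producing the additive term $C_0 M^{1/(\gz+1)}(2^\star)^k$ in the recursive estimate. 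Your recursion $q_{k+1}=2^\star(\gz+1)+\frac{2^\star}{2}q_k$ grows at rate $2^\star/2$ per step rather than $2^\star$, so even if it could be made to close it would not produce the exponent $\az_0=1-\tfrac{n}{(2^\star)^{k_0+1}(\gz+1)}$ stated in the lemma for the given $k_0$. Finally, a smaller point: in the non-convex case the extra term from Lemma \ref{sd-es}(ii) is of the same shape as the Sobolev lower-order term, so the paper folds it into the same iterated quantity rather than treating it as a one-time error to be absorbed; your ``carry it along'' plan is workable in spirit but should be matched to that same exponent bookkeeping.
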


The following shows that the
$\|(|Dv|^2+\ez)^{  \gz/2 }\bdz^N_{p,\ez}v\| _{L^\infty(\Omega)}$ above cannot be replaced by
$\|(|Dv|^2+\ez)^{  \gz/2 }\bdz^N_{p,\ez}v\| _{L^2(\Omega)}$.
  Observe that
$2^{\star} (\gz+1)\le n$ if and only if $\gz\le \frac{n-4}2$ whenever $n\ge 3$.

\begin{prop}\label{example}
Assume that $-1<\gz<\frac{n-4}2$ with $n\ge 3$ and $\gz=\frac{n-4}2$
with $n\ge 4$. There exists $w_{k}\in C^\fz(\overline B_1)$ with
$w_{k}=0$ on $\partial B_1$ such that
$$\||Dw_k|^{\gz}\bdz^N_{p}w_{k}\|_{L^2(B_1)}\le C(n,p,\gz)
\quad{\rm while}\quad \|w_{k}\|_{L^\fz(B_1)}\to +\fz
\quad{\rm as}\quad k\to \fz.$$
\end{prop}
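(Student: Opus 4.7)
The plan is to build explicit radial examples $w_k$ on $\overline{B_1}$ that sit just below the $L^2$-integrability threshold for $|Dw|^{\gamma}\bdz^N_p w$ while forcing $\|w_k\|_{L^\infty}$ to blow up; the construction splits into a subcritical and a critical regime.

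\textbf{Subcritical case $-1<\gamma<\frac{n-4}{2}$.} Under this hypothesis $\frac{n-2}{2(\gamma+1)}-1>0$, so I will fix $\alpha>0$ with $(\gamma+1)(\alpha+1)<\frac{n-2}{2}$ and consider the regularized power
\[
v_k(r):=(r^2+k^{-2})^{-\alpha/2},\qquad w_k(x):=v_k(|x|)-v_k(1).
\]
Then $w_k\in C^\infty(\overline{B_1})$, $w_k|_{\partial B_1}=0$, and $\|w_k\|_{L^\infty(B_1)}\ge k^\alpha-1\to\infty$, so only the $L^2$ bound requires work. Using the radial identities $|Dv|=|v'|$ and $\bdz^N_p v=(p-1)v''+(n-1)v'/r$, a direct calculation gives, with explicit constants $A,B$ depending only on $p,n,\alpha$,
\[
|Dv_k|^{\gamma}\bdz^N_p v_k=\alpha^{\gamma+1}\,r^{\gamma}(r^2+k^{-2})^{-(\gamma+1)(\alpha+2)/2-1}\bigl[Ar^2-Bk^{-2}\bigr].
\]
Squaring, using $|Ar^2-Bk^{-2}|^2\ls(r^2+k^{-2})^2$, and splitting $\int_0^1(\cdots)\,r^{n-1}\,dr$ at $r=k^{-1}$: on $r\ge k^{-1}$ the bound $r^2+k^{-2}\ge r^2$ reduces the integrand to $r^{n-3-2(\gamma+1)(\alpha+1)}$, integrable near the origin; on $r\le k^{-1}$ the bound $r^2+k^{-2}\ge k^{-2}$ produces a contribution of order $k^{2(\gamma+1)(\alpha+1)+2-n}$. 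Both pieces are finite (indeed the inner one is $o(1)$) by the choice of $\alpha$.

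\textbf{Critical case $\gamma=\frac{n-4}{2}$, $n\ge4$.} Now $\gamma+1=(n-2)/2$ forces $\alpha\le0$ in the previous recipe, so I switch to a logarithmic surrogate. I fix $\sigma\in\bigl(0,\frac{n-3}{n-2}\bigr)$ and set
\[
v_k(r):=\bigl[\log\bigl(e+(r^2+k^{-2})^{-1}\bigr)\bigr]^{\sigma},\qquad w_k(x):=v_k(|x|)-v_k(1).
\]
Again $w_k\in C^\infty(\overline{B_1})$ vanishes on $\partial B_1$ and $\|w_k\|_{L^\infty(B_1)}\gs(\log k)^{\sigma}\to\infty$. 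A parallel but heavier computation will give $|Dv_k|^{\gamma}\bdz^N_p v_k=O\bigl(r^{-\gamma-2}L^{(\sigma-1)(\gamma+1)}\bigr)$ on $r\ge k^{-1}$, with $L:=\log\bigl(e+(r^2+k^{-2})^{-1}\bigr)$. Since $n-2\gamma-5=-1$ at the critical $\gamma$, the $L^2$ integral collapses to $\int L^{2(\sigma-1)(\gamma+1)}\,dr/r$, which converges iff $2(\sigma-1)(\gamma+1)<-1$, i.e.\ iff $\sigma<\frac{n-3}{n-2}$; the inner region $r\le k^{-1}$ is handled as in Case~1 with $L$ frozen at $\asymp\log k$.

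\textbf{Main obstacle.} The delicate step will be the inner region when $\gamma<0$, since $|Dw_k|^{\gamma}$ then carries an $r^{\gamma}$ singularity at the origin that could a priori ruin $L^2$-integrability. This must be compensated by the vanishing of the bracket $Ar^2-Bk^{-2}$ (respectively of the log correction) at scale $r\asymp k^{-1}$ together with the Jacobian $r^{n-1}$; the resulting $k$-exponent $2(\gamma+1)(\alpha+1)+2-n$ is non-positive exactly when the threshold $(\gamma+1)2^\star\le n$ holds, and it is this borderline bookkeeping that makes the example sharp for the dichotomy separating Lemmas~\ref{holder-1} and \ref{holder-2}.
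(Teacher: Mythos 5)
Your argument is correct and reaches the same conclusion, but the construction is genuinely different from the paper's. The paper uses a single $\gamma$-independent profile $v^{\epsilon}(|x|)=\int_{|x|}^{1}s^{-1}\phi(s/\epsilon)\,ds$, a cutoff regularization of $\log(1/|x|)$: the cutoff $\phi$ makes $v^{\epsilon}$ constant on $B_{\epsilon}$, so $Dv^{\epsilon}$ vanishes identically near the origin and the $r^{\gamma}$ singularity never appears; the critical case $\gamma=\frac{n-4}{2}$ is then dispatched by the cheap post-hoc rescaling $u^{\epsilon}=(\ln\epsilon^{-1})^{-1/(2(1+\gamma))}v^{\epsilon}$, which removes the $\ln\epsilon^{-1}$ factor from the $L^{2}$ bound while $\|u^{\epsilon}\|_{L^{\infty}}$ still diverges (at the slower rate $(\ln\epsilon^{-1})^{(n-3)/(n-2)}$). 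You instead tune the profile to $\gamma$: a regularized power $(|x|^{2}+k^{-2})^{-\alpha/2}$ with $(\gamma+1)(\alpha+1)<\frac{n-2}{2}$ in the strict subcritical range, and a regularized log-power $\bigl[\log(e+(|x|^{2}+k^{-2})^{-1})\bigr]^{\sigma}$ with $0<\sigma<\frac{n-3}{n-2}$ at the critical value, chosen so the $L^{2}$ integral is already uniformly bounded with no renormalization. Both routes work; the paper's is a touch more economical (one profile plus one trick), while yours makes the borderline bookkeeping at $(\gamma+1)(\alpha+1)=\frac{n-2}{2}$ and $\sigma=\frac{n-3}{n-2}$ more visible. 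One point worth flagging when you write it out in full: your $v_{k}$ has $Dv_{k}(0)=0$, so $\Delta^{N}_{p}v_{k}$ is a priori undefined at the single point $x=0$ (the paper's $v^{\epsilon}$ dodges this by being constant near the origin); this is harmless for the $L^{2}$ estimate since $\{0\}$ is a null set and $|Dv_{k}|^{\gamma}\Delta^{N}_{p}v_{k}\sim r^{\gamma}$ near $0$ is square-integrable because $\gamma>-1>-n/2$, but it deserves an explicit sentence. Also, your closing remark that the inner-region exponent $2(\gamma+1)(\alpha+1)+2-n$ is non-positive ``exactly when $(\gamma+1)2^{\star}\le n$'' is only accurate in the degenerate limit $\alpha\to 0^{+}$; for $\alpha>0$ it is a strictly stronger constraint, which is fine for the proof but should not be stated as an equivalence.
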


We also establish a local version for H\"older estimate.
\begin{lem}\label{local-hol}
Assume that $v\in C^\fz(\Omega)$ with
$v=0$ on $\partial\Omega$.
There exists $\az=\az(n,\gz)\in (0,1)$ such that the following holds:



\begin{itemize}
\item[(i)] If $\Omega$ convex or $\Omega$ is not convex whenever $\gz<p-2$, then
\begin{align*}
\|v\|_{C^{0,\az}(B_{r/2}(y))}\le C\|(|Dv|^2+\ez)^{\gz/2}\Delta_{p,\ez}^Nv\|
^{1/(\gz+1)}_{L^\fz(B_r(y))}+C\|(|Dv|^2+\ez)^{  \gz/2 }\bdz^N_{p,\ez}v\|^{\frac 1{\gz+1}}_{L^2(\Omega)}+C\ez^{\frac 12},
\end{align*}
where $C=C(p,n,\gz,r,\Omega)$.

\item[(ii)] If $\Omega$ is not convex whenever $\gz>p-2$, then
\begin{align*}
&\|v\|_{C^{0,\az}(B_{r/2}(y))}\\
&\le C\|(|Dv|^2+\ez)^{\gz/2}\Delta_{p,\ez}^Nv\|
^{1/(\gz+1)}_{L^\fz(B_r(y))}+C\|(|Dv|^2+\ez)^{  \gz/2 }\bdz^N_{p,\ez}v\|^{\frac 1{\gz+1}}_{L^2(\Omega)}+C\|v\|_{L^2(\Omega)}+C\ez^{\frac 12},
\end{align*}
where $C=C(p,n,\gz,r,\Omega)$.
\end{itemize}
\end{lem}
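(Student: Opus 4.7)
The plan is to combine the global gradient estimate of Lemma \ref{Lq-es} with a local $L^\infty$-bound for $v$ and the known local $C^{1,\beta}$-regularity for viscosity solutions of $-|Du|^\gamma \Delta_p^N u = g$ with $g \in L^\infty$. Throughout, set $f^\epsilon := (|Dv|^2+\epsilon)^{\gamma/2}\Delta_{p,\epsilon}^N v$, so that the equation may be rewritten in divergence form as
\begin{align*}
-\Delta_{p,\epsilon}v = (|Dv|^2+\epsilon)^{(p-2-\gamma)/2}\, f^\epsilon \quad \text{in } \Omega.
\end{align*}

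First, I will apply Lemma \ref{Lq-es} together with the Sobolev-Poincar\'e inequality (valid since $v=0$ on $\partial\Omega$) to obtain the global bound
\begin{align*}
\|v\|_{L^{(1+\gamma)2^\star}(\Omega)} \le C\|f^\epsilon\|_{L^2(\Omega)}^{1/(\gamma+1)} + C\epsilon^{1/2},
\end{align*}
with an extra additive term $C\|v\|_{L^2(\Omega)}$ in case (ii). Second, I will run a local Moser iteration in $B_r(y)$ on the divergence-form equation: testing with $\eta^2 v|v|^{q-2}$ for a cutoff $\eta\in C_c^\infty(B_r(y))$ and using Young's inequality to balance the weight $(|Dv|^2+\epsilon)^{(p-2-\gamma)/2}$ against the diffusion term $(|Dv|^2+\epsilon)^{(p-2)/2}|Dv|^2$ produced on the left, one obtains after iterating
\begin{align*}
\|v\|_{L^\infty(B_{3r/4}(y))} \le C\bigl(\|v\|_{L^{(1+\gamma)2^\star}(B_r(y))} + \|f^\epsilon\|_{L^\infty(B_r(y))}^{1/(\gamma+1)} + \epsilon^{1/2}\bigr).
\end{align*}
In case (i) with $\gamma\le p-2$ the weight is non-singular in $|Dv|$ and the iteration proceeds by standard arguments; in case (ii) with $\gamma>p-2$ the weight becomes singular as $|Dv|\to 0$, and the iteration is closed by controlling the weight through the global $L^{(1+\gamma)2^\star}$-bound on $Dv$ from Step 1, which is the origin of the extra $\|v\|_{L^2(\Omega)}$ term.

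Third, viewing $v$ as a viscosity solution of $-|Dv|^\gamma\Delta_p^N v = f^\epsilon$ up to an $O(\epsilon^{1/2})$ error absorbed in the final estimate, I will invoke the local $C^{1,\beta}$-regularity of Birindelli-Demengel \cite{bd10}, \cite{apr17}, and Attouchi-Ruosteenoja \cite{ar18}, which yields some $\alpha\in(0,1)$ (depending on $n$, $p$, $\gamma$) such that
\begin{align*}
\|v\|_{C^{0,\alpha}(B_{r/2}(y))} \le C\bigl(\|v\|_{L^\infty(B_{3r/4}(y))} + \|f^\epsilon\|_{L^\infty(B_r(y))}^{1/(\gamma+1)} + \epsilon^{1/2}\bigr).
\end{align*}
Combining the three steps yields the claim in both (i) and (ii).

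The main obstacle is Step 2 when $\gamma>p-2$: a naive Young's inequality applied to the singular weight $(|Dv|^2+\epsilon)^{(p-2-\gamma)/2}$ produces constants that blow up as $\epsilon\to 0$. Balancing this weight against the diffusion term therefore requires an auxiliary use of the global $L^{(1+\gamma)2^\star}$ gradient bound from Step 1, and this is precisely why the extra $\|v\|_{L^2(\Omega)}$ term is needed in case (ii), mirroring the structure already observed in Lemmas \ref{sd-es} and \ref{Lq-es}.
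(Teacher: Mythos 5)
Your route diverges substantially from the paper's, and it has real gaps.

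The paper proves Lemma~\ref{local-hol} by Moser iteration on the \emph{gradient}: one takes the second-order pointwise inequality of Lemma~\ref{key-in}, tests with a cut-off $\xi\in C^\infty_c(B_r(y))$ raised to the gradient weight as in \eqref{sd-eq7}, applies Sobolev on $(|Dv|^2+\ez)^{(\gamma+1+\beta)/2}\xi$, and iterates in $\beta$ to get $Dv\in L^q(B_{r/2}(y))$ for some $q>n$ in terms of $M_r=\|(|Dv|^2+\ez)^{\gamma/2}\Delta^N_{p,\ez}v\|_{L^\infty(B_r)}$ plus the starting integral $\int_\Omega(|Dv|^2+\ez)^{\gamma+1}$, which is then controlled by Lemma~\ref{Lq-es}; Morrey then gives the H\"older bound. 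Your proposal instead iterates on $v$ itself in the divergence-form equation $-\Delta_{p,\ez}v=(|Dv|^2+\ez)^{(p-2-\gamma)/2}f^\ez$ and then invokes an external $C^{1,\beta}$ regularity theorem to finish. This is a genuinely different decomposition, but as written it does not close.

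First, Step~3 is a gap you cannot paper over. The cited $C^{1,\beta}$ results of Birindelli--Demengel and Attouchi--Ruosteenoja are for the degenerate/singular operator $|Du|^\gamma\Delta^N_p u$, not for the $\ez$-regularized operator $(|Dv|^2+\ez)^{\gamma/2}\Delta^N_{p,\ez}v$; one needs interior estimates for the regularized operator with constants \emph{uniform in} $\ez$. Your phrase ``up to an $O(\ez^{1/2})$ error absorbed in the final estimate'' is not a valid reduction: the $\ez$-regularization changes the operator's structure everywhere, not merely additively, and there is no single $O(\ez^{1/2})$ remainder one can subtract off. Moreover, invoking local $C^{1,\beta}$ as a black box would give much more than is claimed (gradient H\"older vs.\ $C^{0,\alpha}$) and with constants whose dependence on $\ez$ is precisely what one is trying to establish — this is circular at the level of what Lemma~\ref{local-hol} is for.

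Second, your accounting of the weight $(|Dv|^2+\ez)^{(p-2-\gamma)/2}$ in Step~2 is off. When $\gamma\le p-2$ the exponent $(p-2-\gamma)/2$ is $\ge 0$, so the weight \emph{grows} with $|Dv|$; the iteration is not ``standard'' because the source term is uncontrolled where the gradient is large, which is exactly the regime the iteration must survive. When $\gamma>p-2$ the weight is singular as $|Dv|\to 0$, i.e.\ where the gradient is small; a global $L^{(1+\gamma)2^\star}$ bound on $Dv$ controls large gradients, not small ones, so ``controlling the weight through the global $L^{(1+\gamma)2^\star}$-bound'' does not do what you claim. Relatedly, the extra $\|v\|_{L^2(\Omega)}$ term in case (ii) is not produced by any local Moser step; in the paper it comes entirely from the non-convex global estimate in Lemma~\ref{Lq-es}, and the same is true if one follows your Step~1. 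So the attribution in your final paragraph is incorrect. The structural reason the paper iterates on the gradient rather than on $v$ is precisely that Lemma~\ref{sd-es} gives a clean reverse-H\"older chain for $(|Dv|^2+\ez)^{(\gamma+1+\beta)/2}$ with constants uniform in $\ez$, avoiding both the degenerate weight and the need for any black-box regularity input.
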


\begin{rem}\rm
(i) In dimension $n\ge 4$, thanks to Proposition \ref{example}, we know that the H\"older estimate in
Lemma \ref{holder-1}  is sharp for the convex domain $\Omega$. Namely, if
$\gz\le \frac{n-4}2$, we can not control the H\"older norm of
$v$ by using the $L^2$-norm of $(|Dv|^2+\ez)^{\gz/2}\bdz^N_{p,\ez} v$ uniformly
in $v$ and $\ez$.

(ii) For the case $\gz=\frac {n-4}2$ with $n=3$,
it is unclear whether Proposition
\ref{example} holds.

(iii) In Lemma \ref{local-hol} (ii), when $\Omega$ is not convex and $\gz>p-2$, it remains open to control term
$\|v\|_{L^2(\Omega)}$ via $\|(|Dv|^2+\ez)^{  \gz/2 }\bdz^N_{p,\ez}v\|_{L^\fz_{\loc}(\Omega)}$ and
$\|(|Dv|^2+\ez)^{  \gz/2 }\bdz^N_{p,\ez}v\|_{L^2(\Omega)}$.
\end{rem}

\subsection{Proof of   Lemma \ref{holder-2}}
We now prove Lemma \ref{holder-2}.

\begin{proof}[Proof of Lemma \ref{holder-2}]
Denote by the $q_k=(\gz+1)\left(2^{\star}\right)^k$ with $k\ge 0$. Write
$$M=\|(|Dv|^2+\ez)^{\gz/2}\bdz^N_{p,\ez}v\|_{L^\fz(\Omega)}.$$
We first claim
that there exists a constant $C_0=C_0(p,n,\gz,L_{\Omega},d_{\Omega})>2$ such
that
\begin{align}\label{cla-hi}
& \left(\int_{\Omega}
|\sqrt{|Dv|^2+\ez}|^{q_{k+1}}
\right)^{\frac{1}{q_{k+1}}}\nonumber\\
  & \le C_0M^{\frac{1}{\gz+1}}(2^{\star})^k
+(C_0)^{\frac 1{\gz+1}\frac{k+1}{(2^{\star})^{k+1}}}\left(\int_{\Omega}|\sqrt{|Dv|^2+\ez}|^{q_k}\,dx\right)^{\frac 1{q_k}}
\end{align}
for all $k\ge 0$.
Assume that this holds for the moment.  Since $\sum_{j=0}^{\fz}\frac{k}{(2^{\star})^k}\le C(n)$,
then by an iteration we conclude that
\begin{align}\label{lq-it}
& \left(\int_{\Omega}
|\sqrt{|Dv|^2+\ez}|^{q_{k+1}}
\right)^{\frac{1}{q_{k+1}}}\nonumber\\
  & \le CM^{\frac{1}{\gz+1}}\sum_{j=0}^k(2^{\star})^j
+C
\left(\int_{\Omega}|\sqrt{|Dv|^2+\ez}|^{\gz+1}\,dx\right)^{\frac 1{\gz+1}}.
\end{align}
From this and by Morrey's inequality with $q_{k+1}>n$ for some $k>\ln \frac{n}{2^{\star}(\gz+1)}$,
then we only need to bound
the second term of right-hand side in \eqref{lq-it}. If $\Omega$ is convex,
then  it follows from \eqref{lq-es1} in Lemma \ref{Lq-es}.
Suppose now that $\Omega$ is not convex and that
$\lim_{r\to 0^{+}}\Psi_{\Omega}(r)<\dz$ for some $\dz>0$.
Similarly, this also holds via \eqref{lq-es2} whenever $\gz\le p-2$ and
by \eqref{lq-es3} whenever $\gz>p-2$.

In what follows, we prove claim \eqref{cla-hi}. Observe that $\Omega$ is Lipschitz domain. By Theorem 3.6 in \cite{cd24},
we can use a Poincar\'e-Sobolev inequality to get
\begin{align*}
 \left(\int_{\Omega}
\big|
(|D v |^2+\ez)^{\frac{\gz+1+\bz}{2} }-\mint{-}_{\Omega}(|D v |^2+\ez)^{\frac{\gz+1+\bz}{2} }
\,dz\big|^{2^{\star}}\,dx
\right)^{\frac{1}{2^{\star}}}
&\le C
\left(\int_{\Omega}|D(|D v |^2+\ez)^{ \frac{\gz+1+\bz}2}|^2\,dx\right)^{\frac 12},
\end{align*}
where $C=C(n,d_{\Omega},L_{\Omega})$. Thus via a triangle inequality one gets
\begin{align*}
 &\left(\int_{\Omega}
(|D v |^2+\ez)^{\frac{\gz+1+\bz}{2}2^{\star} }\,dx
\right)^{\frac{1}{2^{\star}}}\\
&\le C
\left(\int_{\Omega}|D(|D v |^2+\ez)^{ \frac{\gz+1+\bz}2}|^2\,dx\right)^{\frac 12}
+C\int_{\Omega}(|D v |^2+\ez)^{\frac{\gz+1+\bz}2}\,dx.
\end{align*}
Applying Lemma \ref{sd-es} for the second term of right-hand side we further have
  \begin{align*}
& \left(\int_{\Omega}(|D v |^2+\ez)^{\frac{\gz+1+\bz}{2}2^{\star}}
\right)^{\frac{1}{2^{\star}}}\\
&\le  C(1+\bz^2)\left(\int_{\Omega} (|Dv|^2+\ez)^{  \gz+\bz }
(\Delta^N_{p,\ez}v)^2\,dx\right)^{\frac 12}+C(1+\bz^2)\int_{\Omega}(|D v |^2+\ez)^{\frac{\gz+1+\bz}2}\,dx,
\end{align*}
where $C=C(d_{\Omega},L_{\Omega},p,n,\gz)$. If $\bz=0$, then \eqref{cla-hi} clearly follows with
$k=0$.
Assume that $\bz>0$ and $k\ge 1$.
Now we apply H\"older inequality with exponents
$\frac{2^{\star}(\gz+1+\bz)}{2\bz}$ and $\frac{2^{\star}(\gz+1+\bz)}{2^{\star}(\gz+1+\bz)-2\bz}$
to obtain
\begin{align*}
&C(1+\bz^2)\left(\int_{\Omega} (|Dv|^2+\ez)^{  \gz+\bz }
(\Delta^N_{p,\ez}v)^2\,dx\right)^{\frac 12}\\
&\le C(1+\bz^2)M|\Omega|^{\frac{2^{\star}(\gz+1+\bz)-2\bz}{2^{\star}(\gz+1+\bz)}}\left(
\int_{\Omega}(|Dv|^2+\ez)^{\frac{2^{\star}(\gz+1+\bz)}{2} }\,dx \right)
^{\frac{\bz}{2^{\star}(\gz+1+\bz)}}
\end{align*}
Then using a weighted Young inequality with exponents
$\frac{\gz+1+\bz}{\bz}$ and $\frac{\gz+1+\bz}{\gz+1}$ we further have
\begin{align*}
&C(1+\bz^2)M|\Omega|^{\frac{2^{\star}(\gz+1+\bz)-2\bz}{2^{\star}(\gz+1+\bz)}}\left(
\int_{\Omega}(|Dv|^2+\ez)^{\frac{2^{\star}(\gz+1+\bz)}{2} }\,dx \right)
^{\frac{\bz}{2^{\star}(\gz+1+\bz)}}\\
&\le \frac 12\left(\int_{\Omega}(|D v |^2+\ez)^{\frac{\gz+1+\bz}{2}2^{\star}}
\right)^{\frac{1}{2^{\star}}}
+\frac{\gz+1}{\gz+1+\bz}
\left(\frac{2\bz}{\gz+1+\bz}\right)^{\frac{\bz}{\gz+1}}[CM(1+\bz^2)]^{\frac{\gz+1+\bz}{\gz+1}}
|\Omega|^{\frac{2^{\star}(\gz+1+\bz)-2\bz}
{2^{\star}(\gz+1)}}
\end{align*}

Now we set
$$\bz_k=(\gz+1)\left(2^{\star}\right)^k-(\gz+1)$$
for all $k\ge 1$. Note that $q_k=(\gz+1)\left(2^{\star}\right)^k$.
Then we deduce
\begin{align*}
& \left(\int_{\Omega}
|\sqrt{|Dv|^2+\ez}|^{q_{k+1}}
\right)^{\frac{1}{q_{k+1}}}\\
  & \le C_0(2^{\star})^k M^{\frac{1}{\gz+1}}
+(C_0)^{\frac 1{\gz+1}\frac{k+1}{(2^{\star})^{k+1}}}\left(\int_{\Omega}|\sqrt{|Dv|^2+\ez}|^{q_k}\,dx\right)^{\frac 1{q_k}}
\end{align*}
for some $C_0=C_0(p,n,\gz,L_\Omega,d_{\Omega})>2$ and
for all $k\ge 1$.  Hence we finish this proof.
\end{proof}

%
%
%
%
%
%

The proof Lemma \ref{local-hol} is much similar to Lemma \ref{holder-2}.

\begin{proof}[Proof of Lemma \ref{local-hol}]
Without loss of generality, we assume that   $y=0$ and $B_r\Subset \Omega$.
Set
$$M_r=\|(|Dv|^2+\ez)^{\gz/2}\Delta_{p,\ez}^Nu\|_{L^\fz(B_r)}.$$

Observe that \eqref{sd-eq7} holds.
Choose a cut-off function $\xi\in C^\fz_c(B_r)$ with $0\le \xi\le1$ in \eqref{sd-eq7} to obtain
 \begin{align}
&\frac 12C_1
\int_{B_r}|D^2 v |^2(|D v |^2+\ez)^{ \gz+\bz}
\xi^2\,dx\nonumber \\
&\le C (1+\bz^2)
\int_{B_r} (|Dv|^2+\ez)^{\gz+\bz}(\Delta_{p,\ez}^Nu)^2\xi^2\,dx+C \int_{B_r}(|D v |^2+\ez)^{ \gz+\bz+1 }|D\xi|^2\,dx\nonumber
\\
&\le C (1+\bz^2) M_r^2
\int_{B_r} (|Dv|^2+\ez)^{\bz}\xi^2\,dx+C \int_{B_r}(|D v |^2+\ez)^{ \gz+\bz+1 }|D\xi|^2\,dx\nonumber
\end{align}
for all $\bz\ge 0$. A Sobolev inequality yields that
\begin{align}
 &\left(\int_{B_r}
\big|(|D v |^2+\ez)^{\frac{\gz+1+\bz}{2}}\xi\big|^{2^{\star}}\,dx
\right)^{\frac{2}{2^{\star}}}\nonumber\\
&\le C\int_{B_r}|D[(|D v |^2+\ez)^{ \frac{\gz+1+\bz}2}\xi]|^2\,dx\nonumber\\
&\le C (1+\bz^2)M_r^2
\int_{B_r} (|Dv|^2+\ez)^{ \bz} \xi^2\,dx+C \int_{B_r}(|D v |^2+\ez)^{ \gz+\bz+1 }|D\xi|^2\,dx\label{xx1}
\end{align}

We first assume that $\bz>0$.  
By H\"older's inequality and Young's inequality, we have
\begin{align*}
 & C (1+\bz^2)M^2_r
\int_{B_r} (|Dv|^2+\ez)^{\bz}\xi^2\,dx\\
&\le C (1+\bz^2)M_r|B_r|^{\frac{2^{\star}(\gz+1+\bz)-2\bz}{2^{\star}(\gz+1+\bz)}}
 \left(\int_{B_r} (|Dv|^2+\ez)^{\frac{2^{\star}}{2}(\gz+1+\bz)}\xi^{2^{\star}{\frac{\gz+1+\bz}{\bz}}}\,dx\right)^{\frac{2\bz}{2^{\star}(\gz+1+\bz)}}\\
&\le \frac 12\left(\int_{B_r} (|Dv|^2+\ez)^{\frac{2^{\star}}{2}(\gz+1+\bz)}\xi^{2^{\star}{\frac{\gz+1+\bz}{\bz}}}
\,dx\right)^{\frac{2}{2^{\star}}}\\
&\quad+\frac{\gz+1}{\gz+1+\bz}
\left(\frac{2\bz}{\gz+1+\bz}\right)^{\frac{\bz}{\gz+1}}[CM^2_r(1+\bz^2)]^{\frac{\gz+1+\bz}{\gz+1}}
|B_r|^{\frac{2^{\star}(\gz+1+\bz)-2\bz}
{2^{\star}(\gz+1)}}
\end{align*}
Since $0\le \xi\le 1$, one has $\xi^{2^{\star}{\frac{\gz+1+\bz}{\bz}}}
\le \xi^{2^{\star}}$. From this and \eqref{xx1}, we conclude that
\begin{align*}
 &\left(\int_{B_r}
\big|(|D v |^2+\ez)^{\frac{\gz+1+\bz}{2}}\xi\big|^{2^{\star}}\,dx
\right)^{\frac{2}{2^{\star}}}\\
&\le C\int_{B_r}(|D v |^2+\ez)^{ (\gz+\bz+1)}|D\xi|^{2}\,dx\\
&\quad+\frac{\gz+1}{\gz+1+\bz}
\left(\frac{2\bz}{\gz+1+\bz}\right)^{\frac{\bz}{\gz+1}}[CM^2_r(1+\bz^2)]^{\frac{\gz+1+\bz}{\gz+1}}
|B_r|^{\frac{2^{\star}(\gz+1+\bz)-2\bz}
{2^{\star}(\gz+1)}}
\end{align*}
Write
$$\bz_k=\left(\frac {2^{\star}}2\right)^{k}(\gz+1)-(\gz+1) $$
and
$$q_k=\left(\frac {2^{\star}}2\right)^{k}(\gz+1).$$
 We  derive that
\begin{align*}
& \left(\int_{B_r}
(|Dv|^2+\ez)^{q_{k+1}}\xi^{2^{\star}}
\right)^{\frac{1}{q_{k+1}}}\\
  & \le C_0(2^{\star})^k M_r^{\frac{2}{\gz+1}}
+(C_0)^{\frac 1{\gz+1}\frac{k+1}{(2^{\star}/2)^{k+1}}}\left(\int_{B_r}(|Dv|^2+\ez)^{q_k}
|D\xi|^2\,dx\right)^{\frac 1{q_k}}
\end{align*}
for some $C_0=C_0(p,n,\gz,r)>2$ and for all $k\ge 1$. Note that
$k=0$ also holds for this inequality.

For each $k\ge 0$, we choose $\xi\in C^{\fz}_c(B_{r_k})$ so that
 $$\xi=1\quad{\rm on}\ B_{r_{k+1}},\quad |D\xi|\le C2^k $$
where
$$r_k=\frac r2+\frac{r}{2^{k+1}}.$$
We then obtain
\begin{align*}
& \left(\int_{B_{r_{k+1}}}
(|Dv|^2+\ez)^{q_{k+1}}
\right)^{\frac{1}{q_{k+1}}}\\
  & \le C_0(2^{\star})^k M_r^{\frac{2}{\gz+1}}
+(C_0)^{\frac 1{\gz+1}\frac{k+1}{(2^{\star}/2)^{k+1}}}\left(\int_{B_{r_k}}(|Dv|^2+\ez)^{q_k}
\,dx\right)^{\frac 1{q_k}}
\end{align*}
By an iteration, one can find a $k_0=k_0(n,\gz)$ such that
$\left(\frac {2^{\star}}2\right)^{k_0+1}(\gz+1)>n$ and
\begin{align*}
& \left(\int_{B_{r_{k_0+1}}}
(|Dv|^2+\ez)^{q_{k_0+1}}
\right)^{\frac{1}{q_{k_0+1}}} \le C M_r^{\frac{2}{\gz+1}}
+C\left(\int_{B_{r}}(|Dv|^2+\ez)^{\gz+1}
\,dx\right)^{\frac 1{\gz+1}}
\end{align*}
If $\Omega$ is convex or $\Omega$ is not convex whenever $\gz\le p-2$, using a Morrey's inequality and Lemma \ref{Lq-es} we conclude this lemma.

\end{proof}
\subsection{Proof of Proposition \ref{example}}
We prove Proposition \ref{example} in the following.

\begin{proof}[Proof of Proposition \ref{example}]
Let  $\phi(s)\in C^\fz(0,+\fz)$ a smooth cut-off function such that
\begin{align}\label{hj-1}
\phi(s):=\left\{
\begin{aligned}
1&\quad{\rm if}\ &s\ge 2,\\
0&\quad {\rm if}\ &0\le s\le 1,
\end{aligned}
\right.
\end{align}
and
\begin{align}\label{hj-2}
|\phi'(s)|\le 2\quad \forall s\ge 0.
\end{align}

Given $
 \ez\in(0,1/2)$, set

$$
v^{\ez}(|x|)=\int^1_{|x|} s^{-1}\phi\left(\frac s \ez\right)\,ds\quad
\mbox{for\  all $|x|<1$.}$$

 For $x\in B_\ez$, thanks to  $\phi(\ez^{-1}s)=0$ when $ s\le \ez $,  we have
\begin{align}\label{cons}
v^{\ez}(x)=\int^\ez_{|x|} s^{-1}\phi\left(\frac s \ez\right)\,ds
+\int^1_{\ez} s^{-1}\phi\left(\frac s \ez\right)\,ds
=\int^1_{\ez} s^{-1}\phi\left(\frac s \ez\right)\,ds=v^\ez(\ez e_1).
\end{align}
 Via $\phi(s)=1$ when $s\ge 2$ and by \eqref{cons}, we get
$$\|v^{\ez}\|_{L^\fz(B_1)}\ge  v^{\ez}(|x|)=\int^1_{\ez} s^{-1}\phi\left(\frac s \ez\right)\,ds
\ge \int^1_{2\ez}s^{-1}\,ds=-\ln 2\ez.$$

On the other hand,  at any $x\in B_1\setminus B_\ez$, a direct calculation gives
\begin{align*}
&v^{\ez}_{x_i}=- \frac{x_i}{|x|^2}\phi(\ez^{-1}|x|),\\
&v^{\ez}_{x_ix_j}=-\ez^{-1} \frac{x_ix_j}{|x|^3}\phi'(\ez^{-1}|x|)
-\phi(\ez^{-1}|x|)[ \frac{\dz_{ij}}{|x|^2}-2  \frac{x_jx_i}{|x|^4}].
\end{align*}
Hence, at such $x$,
\begin{align*}
-|Dv^{\ez}|^{\gz}\bdz^N_{p} v^{\ez}(x)=\frac{1}{|x|^\gz }\phi^\gz(\ez^{-1}|x|)[(p-1)\frac 1{\ez|x|}\phi'(\ez^{-1}|x|)
+(n-2)\phi(\ez^{-1}|x|)\frac1{|x|^{ 2}}]=:g^\ez
\end{align*}
Therefore
\begin{align*}
\int_{B_1}(g^{\ez})^2\,dx&=c_n\int_\ez^1
\frac{1}{r^{2\gz} }\phi^{2\gz}(\ez^{-1}r)\left[(p-1)\frac 1{\ez r}\phi'(\ez^{-1}r)
+(n-2)\phi(\ez^{-1}r)\frac1{r^{ 2}}\right]^2\,r^{n-1}dr.
\end{align*}
Thus it follows from \eqref{hj-1} and \eqref{hj-2} that
\begin{align*}
&\int_{B_1}(g^{\ez})^2\,dx\\
&=c_n(p-1)\int_\ez^{2\ez}
\frac{1}{r^{2\gz} }\phi^{2\gz}(\ez^{-1}r)\phi'(\ez^{-1}r)\left[(p-1)\frac 1{(\ez r)^2}\phi'(\ez^{-1}r)
+2(n-2)\phi(\ez^{-1}r)\frac1{\ez r^{ 3}}\right]\,r^{n-1}dr\\
&\quad+c_n\int_{2\ez}^{1} (n-2)\frac{1}{r^{2\gz} } \phi^{2+2\gz}(\ez^{-1}r)\frac1{r^{ 4}}r^{n-
1}dr\\
&\le C(n,p)\ez^{n-4-2\gz}+ C(n)\int_{2\ez}^1r^{n-4-2\gz}\frac{dr}r.
\end{align*}
In the case of $\gz=\frac{n-4}2$, we have
$$
\int_{B_1}(g^{\ez})^2\,dx\le C\ln \frac1\ez. $$
Now let  $u^\ez =( \ln \ez^{-1})^{-1/2(1+\gz)}v^\ez$ when $ \gz=\frac{n-4}2$.
Thus
$$\int_{\Omega}(f^{\ez})^2\,dx\le C$$
while when $\gz=\frac{n-4}2$ and $n\ge 4$
 $$ \|u^\ez\|_{L^\fz(B_1)}\ge  C( \ln \frac1\ez )^{1/2(1+\gz)}  \|v^\ez\|_{L^\fz(B_1)}\ge C
\left( \ln\frac1\ez\right)^{\frac{n-3}{n-2}} \to \fz\ \mbox{as $\ez\to0$}.$$

If $\gz<\frac{n-4}2$, we have
$$
\int_{B_1}(g^{\ez})^2\,dx\le C  [\ez^{n-4-2\gz}+1]\le C. $$
Recall that
 $$ \|v^\ez\|_{L^\fz(B_1)}\to +\fz$$
as $\ez\to 0$. Hence we finish this proof.

\end{proof}

\section{Existence
of viscosity solutions under  $f\in L^\fz(\Omega)\cap C^0(\Omega)$ }
In this section, we always assume that $\Omega$ is convex domain, or
$\Omega$ is a bounded Lipschitz domain with
$\lim_{r\to 0^+}\Psi_{\Omega}(r)<\dz$ for some positive constant
$\dz=\dz(p,n,\gz, L_{\Omega},d_{\Omega})$ as in  Lemma \ref{Lq-es} (ii).
We also always let $p>1$ and let $\gz>-1$ satisfy \eqref{pgz}, let $k_0\ge 0$ be a any integer so that $(2^{\star})^{k_0+1}(\gz+1)>n$ and
 $\az=1-\frac{n}{(2^{\star})^{k_0+1}(\gz+1)}$.

\begin{lem}\label{ex-so}

Given any   $f \in C^0(  \Omega)\cap L^\fz(\Omega)$, there exists a viscosity solution $u\in C^{0,\az}(\overline \Omega)$ to the Dirichlet problem
       \begin{align}\label{app-gz}
\left\{
\begin{aligned}
-|Du|^{\gz}\bdz^N_p u &= f
&\quad{\rm in}\ \Omega,\\
u&=0&\quad {\rm on}\  \partial\Omega.
\end{aligned}
\right.
\end{align}
\end{lem}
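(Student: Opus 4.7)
The plan is to obtain $u$ as the limit of classical solutions to a doubly-regularized problem. First I would fix a sequence of smooth bounded domains $\Omega_m$ exhausting $\Omega$ from inside, chosen convex when $\Omega$ is convex, or with $\lim_{r\to 0^+}\Psi_{\Omega_m}(r)$ uniformly small and $L_{\Omega_m},d_{\Omega_m}$ uniformly controlled otherwise; and set $f^\epsilon=\bar f\ast\eta_\epsilon\in C^\infty(\overline{\Omega})\cap L^\infty(\Omega)$, where $\bar f$ is the zero extension of $f$, so that $\|f^\epsilon\|_{L^\infty(\Omega)}\le\|f\|_{L^\infty(\Omega)}$ and $f^\epsilon\to f$ locally uniformly in $\Omega$.

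For each $\epsilon\in(0,1)$ and each $m$ I would solve the regularized problem
\begin{align*}
-(|Du^{\epsilon,m}|^2+\epsilon)^{\gz/2}\,\bdz^N_{p,\epsilon}u^{\epsilon,m}=f^\epsilon \quad\text{in }\Omega_m,\qquad u^{\epsilon,m}=0\quad\text{on }\partial\Omega_m.
\end{align*}
This is a smooth uniformly elliptic quasilinear non-divergence equation with bounded coefficients, so classical Schauder theory via a Leray--Schauder fixed point argument in the spirit of Attouchi--Ruosteenoja \cite{ar18} produces a classical solution $u^{\epsilon,m}\in C^{2,\beta}(\overline{\Omega_m})\cap C^\infty(\Omega_m)$. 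Since $\|(|Du^{\epsilon,m}|^2+\epsilon)^{\gz/2}\bdz^N_{p,\epsilon}u^{\epsilon,m}\|_{L^\infty(\Omega_m)}=\|f^\epsilon\|_{L^\infty(\Omega_m)}\le\|f\|_{L^\infty(\Omega)}$, Lemma \ref{holder-2} applies. In the convex case, and in the non-convex case with $\gz\le p-2$, it yields a uniform bound on $\|u^{\epsilon,m}\|_{C^{0,\alpha_0}(\overline{\Omega_m})}$ depending only on $p,n,\gz,L_\Omega,d_\Omega,\|f\|_{L^\infty(\Omega)}$. When $\gz>p-2$ and $\Omega$ is not convex, the additional $\|u^{\epsilon,m}\|_{L^2}$ term appearing in Lemma \ref{holder-2} would be controlled by a maximum-principle/barrier argument: comparing $u^{\epsilon,m}$ against a radial subsolution/supersolution of a homogeneous auxiliary equation and using $u^{\epsilon,m}=0$ on $\partial\Omega_m$ yields an $L^\infty$ bound in terms of $\|f\|_{L^\infty}$ and the diameter of $\Omega$.

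With the uniform $C^{0,\alpha_0}(\overline{\Omega_m})$ estimate in hand, extending $u^{\epsilon,m}$ continuously to $\overline{\Omega}$ by $0$ and using Arzel\`a--Ascoli, I would extract along a subsequence as $\epsilon\to 0$ and $m\to\infty$ a uniform limit $u\in C^{0,\alpha_0}(\overline{\Omega})$ with $u=0$ on $\partial\Omega$. It remains to check that $u$ is a viscosity solution of $-|Du|^\gz\bdz^N_pu=f$. For $\gz\ge 0$ this is standard Crandall--Ishii--Lions stability, using $f^\epsilon\to f$ locally uniformly and the fact that $\bdz^N_{p,\epsilon}\to\bdz^N_p$ continuously away from critical points, together with the extremal-envelope treatment recorded in Definition \ref{d1}(ii) at points where the test function has zero gradient. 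The main obstacle I expect is the singular range $-1<\gz<0$, where $(|\xi|^2+\epsilon)^{\gz/2}$ blows up as $|\xi|,\epsilon\to 0$ and the operator is genuinely singular; I would handle this by appealing to the Birindelli--Demengel formulation recalled in Remark 2.2, which reduces the viscosity inequality to a pointwise verification at test functions with $D\psi(x_0)\ne 0$, together with a \emph{liminf} passage matching Definition \ref{d1}(iii). This reformulation is stable under uniform convergence of $(u^{\epsilon,m},f^\epsilon)\to(u,f)$, and closes the proof.
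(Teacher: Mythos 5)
Your proposal follows essentially the same blueprint as the paper's proof: regularize in $\ez$ via $\bdz^N_{p,\ez}$ and mollify $f$, solve on smooth approximating domains $\Omega_m$, derive a uniform $C^{0,\az}$ estimate from Lemma \ref{holder-2} together with a barrier/comparison-principle bound on $\|u^{\ez,m}\|_{L^\infty}$, and pass to the limit by Arzel\`a--Ascoli and stability of viscosity solutions (with the Birindelli--Demengel reformulation handling $-1<\gz<0$).

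Two points are worth flagging. First, you exhaust $\Omega$ from the \emph{inside}, whereas the paper approximates from the \emph{outside}, taking smooth $\Omega_m$ with $\Omega\Subset\Omega_{m+1}\Subset\Omega_m$ and $\lim_m\Omega_m=\Omega$, subject to uniform control on $L_{\Omega_m}$, $d_{\Omega_m}$ and $\Psi_{\Omega_m}$. Both choices can be made to work, but the existence of a smooth interior (or exterior) approximating sequence with \emph{uniformly controlled} $\Psi_{\Omega_m}$ is a nontrivial geometric assumption in the merely Lipschitz case, and should be stated as such rather than left implicit. Second, you dispatch the existence of the regularized solutions $u^{\ez,m}\in C^\fz(\overline{\Omega_m})$ by invoking ``classical Schauder theory via Leray--Schauder.'' The actual bottleneck there is an a priori global $L^\fz$ bound on $Du^{\ez,m}$ (the coefficient $(|Du|^2+\ez)^{\gz/2}$ is not uniformly elliptic without it), which the paper supplies via Cianchi--Maz'ya's gradient estimate \cite[Theorem~4.3]{cm14}; your sketch should identify this ingredient explicitly rather than fold it into a generic fixed-point reference. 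With those two clarifications made, the argument matches the paper's.
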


\begin{rem}\rm

 Notice that $f$ may change sign and hence it is unclear for the uniqueness. Indeed, even if $f\ge 0$ or $f\le 0$ for
the normalized $p$-Laplacian $-\bdz^N_p u=f$( when $\gz=0$ ) the uniqueness  remains
open for $p\in (1,2)\cup (2,\fz)$; see  for example \cite{m23}.
\end{rem}

If $f\in C^0(\overline \Omega)$, the  existence of viscosity solution
$u\in C^0(\overline \Omega) $ to \eqref{app-gz} is a direct consequence of Birindelli and Demenge
\cite[Corollary 3.8]{bd09}.
 Their proof
also works when $f\in C^0(\Omega)\cap L^\fz(\Omega)$.
The contribution of Lemma \ref{ex-so} is to guarantee the
 existence of viscosity solutions with H\"older regularity.

To prove Lemma \ref{ex-so}, we use an approximate  argument.
Set $\bar f=f$ in $\Omega$ and $\bar f=0$ on
$\rr^n\backslash \Omega$. Now we set $f^{\ez}=\bar f\ast \eta^{\ez}$ where
$\eta^{\ez}$ is standard mollifier.
   For each $\ez\in (0,1]$ and $m\ge 1$,  let us consider the approximation problem:
\begin{align}\label{app-ex}
\left\{
\begin{aligned}
-(|Du^{\ez,m}|^2+\ez)^{\frac \gz 2}
\bdz^N_{p,\ez}u^{\ez,m}=f^\ez&\quad{\rm in}\ &\Omega_m,\\
u^{\ez,m}=0&\quad {\rm on}\ &\partial\Omega_m.
\end{aligned}
\right.
\end{align}
Here  $\Omega_m$  is smooth domain satisfying:
 \begin{enumerate}
\item[(i) ]$
\Omega\Subset  \Omega_{m+1} \Subset \Omega_m \quad\mbox{and}\quad \lim_{m\to\fz}\Omega_m=\Omega.$

\item[(ii) ]  $
L_{\Omega_m}\le  C(\Omega)L_{\Omega}\quad\mbox{and}\quad d_{\Omega_m} \le C(n) d_{\Omega}.
$

\item[(iii) ]
  $  \Psi_{\Omega_m}(r)\le C(\Omega) \Psi_{\Omega}  (r)$.
\end{enumerate}
Then we have the following:
\begin{lem}\label{ex-l2}
There exists a smooth solution $u^{\ez,m}\in C^\fz(\overline \Omega_m)$ to
\eqref{app-ex} such that
\begin{align}\label{un-hol}
\sup_{m\in\nn}\sup_{\ez\in(0,1]} \|u^{\ez,m}\|_{C^{0,\az}(\overline \Omega_m)}\le C_0[\|f\|_{L^\fz(\Omega)}^{\frac 1{\gz+1}}+1]
\end{align}
for some constant  $C_0:=C(\gz,p,n,d_{\Omega},L_{\Omega})$ and
for some $\az=\az(n,\gz)\in (0,1)$.
\end{lem}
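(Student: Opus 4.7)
The plan has two stages: first produce smooth solutions $u^{\ez,m}$ to the regularized Dirichlet problem \eqref{app-ex} via standard elliptic theory, then extract the uniform H\"older bound \eqref{un-hol} from Lemma \ref{holder-2} together with a supplementary $L^\fz$ estimate used only to handle the non-convex case with $\gz>p-2$.

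For existence, fix $\ez\in(0,1]$ and $m\ge 1$ and rewrite \eqref{app-ex} as $-A_{ij}(Du)u_{x_ix_j}=f^\ez$ with
\begin{equation*}
A_{ij}(\xi)=(|\xi|^2+\ez)^{\gz/2}\Big[\dz_{ij}+(p-2)\frac{\xi_i\xi_j}{|\xi|^2+\ez}\Big].
\end{equation*}
The eigenvalues of $A(\xi)$ lie in $[(|\xi|^2+\ez)^{\gz/2}\min\{1,p-1\},(|\xi|^2+\ez)^{\gz/2}\max\{1,p-1\}]$, so for fixed $\ez>0$ the equation is uniformly elliptic on any range of bounded gradients, with smooth structure. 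Since $f^\ez\in C^\fz(\overline{\Omega_m})$ and $\partial\Omega_m$ is smooth, the Leray-Schauder fixed point theorem combined with a priori $C^{1,\bz}$ estimates of Ladyzhenskaya-Uraltseva type (implemented in the analogous setting by Attouchi-Ruosteenoja \cite{ar18}) produces a classical solution $u^{\ez,m}\in C^{2,\bz}(\overline{\Omega_m})$; Schauder bootstrap then upgrades it to $u^{\ez,m}\in C^\fz(\overline{\Omega_m})$.

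For the uniform H\"older bound, apply Lemma \ref{holder-2} to $v=u^{\ez,m}$ on the smooth domain $\Omega_m$. By construction, $L_{\Omega_m}\le CL_\Omega$, $d_{\Omega_m}\le Cd_\Omega$, and $\Psi_{\Omega_m}\le C\Psi_\Omega$, so the constant in Lemma \ref{holder-2} is independent of $m$ and $\ez$. From the equation itself,
\begin{equation*}
\|(|Du^{\ez,m}|^2+\ez)^{\gz/2}\bdz^N_{p,\ez}u^{\ez,m}\|_{L^\fz(\Omega_m)}=\|f^\ez\|_{L^\fz(\Omega_m)}\le\|f\|_{L^\fz(\Omega)}.
\end{equation*}
Since $\ez\le 1$ forces $\ez^{1/2}\le 1$, this immediately closes \eqref{un-hol} in the two cases of Lemma \ref{holder-2} that are free of the extra $\|v\|_{L^2}$ term: $\Omega$ convex, and $\Omega$ non-convex with $\gz\le p-2$.

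In the remaining non-convex case with $\gz>p-2$, we additionally need uniform control of $\|u^{\ez,m}\|_{L^2(\Omega_m)}$, which I obtain as the $L^\fz$ bound $\|u^{\ez,m}\|_{L^\fz(\Omega_m)}\le C\|f\|_{L^\fz(\Omega)}^{1/(\gz+1)}$ via comparison with a barrier. Using the exterior cone property of the Lipschitz set $\Omega_m$ (with constants uniform in $m$), one constructs a smooth positive function $\phi$ on $\overline{\Omega_m}$ with $\phi=0$ on $\partial\Omega_m$ and $|D\phi|$ bounded below on $\overline{\Omega_m}$; setting $w:=A\|f\|_{L^\fz(\Omega)}^{1/(\gz+1)}\phi$ for $A=A(p,n,\gz,L_\Omega,d_\Omega)$ large enough, a direct computation of $\bdz^N_{p,\ez}w$ gives $-(|Dw|^2+\ez)^{\gz/2}\bdz^N_{p,\ez}w\ge\|f\|_{L^\fz(\Omega)}$ uniformly in $\ez\in(0,1]$, and the classical comparison principle for the (uniformly elliptic for fixed $\ez>0$) equation yields $\pm u^{\ez,m}\le w$. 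Combined with $\|u^{\ez,m}\|_{L^2(\Omega_m)}\le|\Omega_m|^{1/2}\|u^{\ez,m}\|_{L^\fz(\Omega_m)}$, this closes \eqref{un-hol}. The main obstacle I anticipate is ensuring that the barrier's gradient stays bounded below on $\overline{\Omega_m}$ so that the prefactor $(|Dw|^2+\ez)^{\gz/2}$ does not collapse as $\ez\to 0$; the uniform Lipschitz geometry of the $\Omega_m$ is precisely what makes this construction possible.
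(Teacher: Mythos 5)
Your overall architecture matches the paper's: produce smooth solutions by standard quasilinear theory, then derive \eqref{un-hol} from Lemma~\ref{holder-2} on $\Omega_m$ together with an $L^\fz$ comparison bound. Your observation that the $L^\fz$ norm of $(|Du^{\ez,m}|^2+\ez)^{\gz/2}\bdz^N_{p,\ez}u^{\ez,m}$ is exactly $\|f^\ez\|_{L^\fz(\Omega_m)}\le\|f\|_{L^\fz(\Omega)}$ closes the convex case and the non-convex case with $\gz\le p-2$ directly, which is in fact a small simplification: the paper carries the $\|u^{\ez,m}\|_{L^\fz(\Omega_m)}$ term in all cases and therefore always runs the comparison argument, whereas you only invoke it when Lemma~\ref{holder-2} truly produces the extra $\|v\|_{L^2}$ term. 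For existence, invoking Leray--Schauder plus $C^{1,\bz}$ a priori estimates is a viable alternative to the paper's route through Cianchi--Maz'ya \cite{cm14}, which supplies the needed quantitative global gradient bound more explicitly.

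The barrier step, however, has a genuine gap that you yourself flag but do not resolve. A \emph{smooth} function $\phi>0$ in $\Omega_m$ with $\phi=0$ on $\partial\Omega_m$ must attain its maximum at an interior point, where $D\phi=0$; hence $|D\phi|$ can never be bounded below on $\overline{\Omega_m}$, and no amount of uniform Lipschitz geometry changes this. So the function you describe does not exist, and the prefactor $(|Dw|^2+\ez)^{\gz/2}$ does indeed degenerate at the critical point of any zero-trace barrier. The paper sidesteps this entirely: it takes a boundary point $y\in\partial\Omega_m$ and uses the \emph{cone} barrier $v(x)=K\bigl(d_{\Omega_m}-|x-y|\bigr)$. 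This $v$ has $|Dv|\equiv K$ on $\Omega_m$ (the vertex $y$ lies on the boundary, so there is no interior critical point), giving $(|Dv|^2+\ez)^{\gz/2}=(K^2+\ez)^{\gz/2}\ge \tfrac12 K^\gz$ uniformly in $\ez\in(0,1]$, and one computes $\bdz^N_{p,\ez}v=-K(n-1)|x-y|^{-1}\le -K(n-1)/d_{\Omega_m}$. Crucially the barrier is not required to vanish on $\partial\Omega_m$; it satisfies $v\ge 0=u^{\ez,m}$ on $\partial\Omega_m$, which is all the comparison principle (\cite[Proposition 2.5]{bd09}) needs. With $K=1+\bigl(\tfrac{2d_\Omega}{n-1}\|f\|_{L^\fz(\Omega)}\bigr)^{1/(\gz+1)}$ this yields $\pm u^{\ez,m}\le v\le Kd_{\Omega_m}$ and hence the $L^\fz$ (and so $L^2$) bound uniformly in $\ez$ and $m$. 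Replacing your zero-trace smooth barrier by this cone barrier closes the $\gz>p-2$ case and makes the argument correct.
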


\begin{proof}

By the standard quasilinear elliptic theory,
there exists a smooth solution $u^{\ez,m}\in C^\fz(\overline \Omega_m)$  provided that we have following a priori global estimates:
\begin{align}\label{es-m}
\|Du^{\ez}\|_{L^\fz(\Omega_m)}<+\fz.
\end{align}
Indeed, using \cite[Theorme 4.3]{cm14} by Cianchi-Maz'ya we get
$$\|(Du^{\ez}|^2+\ez)^{\frac12}\|_{L^\fz(\Omega_m)}
\le C(p,n,\Omega_m)[\|[|Du^{\ez}|^2+\ez]^{\frac{p-2-\gz}{2}}
|f^{\ez}|\|_{L^\fz(\Omega_m)}]^{\frac{1}{p-1}}.$$
If $p-2-\gz<0$, we then get
$$\|(Du^{\ez}|^2+\ez)^{\frac12}\|_{L^\fz(\Omega_m)}<C(p,n,\Omega_m)
\ez^{^{\frac{p-2-\gz}{2}}}\|f^{\ez}\|_{L^\fz(\Omega_m)}^{\frac 1{p-1}}<+\fz.$$
If $p-2-\gz\ge 0$, we also have
$$\|(Du^{\ez}|^2+\ez)^{\frac12}\|_{L^\fz(\Omega_m)}<C(p,n,\Omega_m)
\|\sqrt{|Du^{\ez}|^2+\ez}\|^{
\frac{p-2-\gz}{p-1}}_{L^\fz(\Omega_m)}\|f^{\ez}\|_{L^\fz(\Omega_m)}^{\frac 1{p-1}},$$
which leads to
$$\|(Du^{\ez}|^2+\ez)^{\frac12}\|^{\gz+1}_{L^\fz(\Omega_m)}<C(p,n,\Omega_m)
\|f^{\ez}\|_{L^\fz(\Omega_m)}<+\fz.$$
Thus \eqref{es-m} holds.

Next we show the uniform H\"older estimate.
Thanks to Lemmas \ref{holder-1} and \ref{holder-2}, by
the hypothesis (i)-(iii) on $\Omega_m$ we conclude that
\begin{align*}
 \|u^{\ez,m}\|_{C^{0,\az}(\overline \Omega_m)}\le C[\|f^{\ez}\|_{L^\fz(\Omega_m)}^{\frac 1{\gz+1}}+1]
+C\|u^{\ez,m}\|_{L^\fz(\Omega_m)},
\end{align*}
where $C=C(p,n,\gz,L_{\Omega},d_{\Omega})$ and $\az=\az(n,\gz)\in (0,1)$. Observe that
$$\|f^{\ez}\|_{L^\fz(\Omega_m)}\le \|\bar f\|_{L^\fz(\Omega_m)}
=\|f\|_{L^\fz(\Omega)}$$
due to $\bar f=0$ on $\Omega_m \backslash \Omega$ and $\bar f=f$
on $\Omega$. Thus
\begin{align}\label{hol}
 \|u^{\ez,m}\|_{C^{0,\az}(\overline \Omega_m)}\le C[\|f^{\ez}\|_{L^\fz(\Omega)}^{\frac 1{\gz+1}}+1]
+C\|u^{\ez,m}\|_{L^\fz(\Omega_m)}.
\end{align}

Now it remains to bound $L^\fz$-norm of $u^{\ez,m}$. We will use a comparison principle
in \cite[Proposition 2.5]{bd09} to prove this. Fix an  $y\in\partial\Omega_m$.
Assume that $ y=0$ without loss of generality.
Define $$v (x)= K[d_{\Omega_m}   -  |x| ],$$
 where $K\ge 1$ is given by
$$K=1+\left(\frac{2d_{\Omega}}{n-1}\|f\|_{L^\fz(\Omega)}\right)^{\frac 1{\gz+1}}.$$
We first show
\begin{align}\label{re-3}
u^{\ez,m}(x)\le v(x)\quad \forall x\in \Omega_m.
\end{align}

Since
$$Dv(x)=K|x|^{-1}x\quad
{\rm and}\quad D^2v(x)=K|x|^{-1}I_n-K|x|^{-3}x\otimes x,$$
then
$$\bdz_{\fz} v(x)=0\quad{\rm and}\quad
\bdz v(x)=K(n-1)|x|^{-1}.$$
Hence by  $|x|^{-1}\ge d_{\Omega_m}^{-1}$ we conclude that
\begin{align*}
-[|D v|^2+\ez]^{\frac\gz 2}
\bdz^N_{p,\ez} v& =K [ K^2  +\ez]^{\frac \gz 2}
( n-1)|x|^{-1}\\
&\ge d_{\Omega_m}^{-1}( n-1)K [ K^2  +\ez]^{\frac \gz 2}\\
&\ge d_{\Omega_m}^{-1}\frac{n-1}2K^{\gz+1}
\end{align*}
Here we also used the fact that
$$K [ K^2  +\ez]^{\frac \gz 2}\ge \frac 12K^{\gz+1}
$$
for all $K\ge 1$, $\ez\in (0,1)$ and $\gz>-1$. Recall that
$$K=1+\left(\frac{2d_{\Omega}}{n-1}\|f\|_{L^\fz(\Omega)}\right)^{\frac 1{\gz+1}}.$$
By $\|f^{\ez}\|_{L^\fz(\Omega_m)}\le \|f\|_{L^\fz(\Omega)}$ and
$d_{\Omega}\le d_{\Omega_m}$, one has
\begin{align*}
&-[|D v(x)|^2+\ez]^{\frac\gz 2}
\bdz^N_{p,\ez} v(x)> \|f^{\ez}\|_{L^\fz(\Omega_m)},
\quad \forall x\in \Omega_m.
\end{align*}
Since $u^{\ez}\le v$ on $\partial\Omega_m$, by a comparison principle
in \cite[Proposition 2.5]{bd09} one has \eqref{re-3}.

Now a similar argument also implies that
\begin{align*}
u^{\ez,m}(x)\ge -v(x),\quad \forall x\in \Omega_m,\quad \forall \ez\in (0,1),
\end{align*}
From this and \eqref{hol} we get \eqref{un-hol} as desired.

  \end{proof}

We finally prove Lemma \ref{ex-so}.
\begin{proof}
[Proof of Lemma \ref{ex-so}]
Thanks to  \eqref{un-hol},
we know that $ u^{\ez,m}\to  u^\ez $ in $C^{0,\az}(\overline \Omega)$ as $m\to\fz $(up to some subsequence)
for some $u^\ez\in C^{0,\az}(\overline\Omega)$ and $u^\ez=0$ on $\partial\Omega$. Moreover,
$u^{\ez}$ is a viscosity solution to
$$-(|Du^{\ez}|^2+\ez)^{\gz}\bdz^N_{p,\ez}u^{\ez}=f^{\ez}\quad{\rm in}\quad \Omega.$$
Note that $u^{\ez}\in C^{0,\az}(\overline \Omega)$ uniformly in $\ez\in (0,1)$ and
$f^{\ez}\to f$ in $C^0(\Omega)$ as $\ez\to 0$. Hence
by a stability result of viscosity solution (see \cite{ar18}) and
$u^{\ez}\to u$ in $C^{0,\az}(\overline \Omega)$ for some
$u\in C^{0,\az}(\overline \Omega)$ and $u=0$ on $\partial\Omega$, we conclude that $u$
is a viscosity solution to \eqref{app-gz}.

\end{proof}

\section{Proof of Theorem \ref{th-re0}}
This section is devoted to the proof of Theorems \ref{th-re0}. In this section, we always let $ (n,p,\gamma)$ be admissible.
We always assume that $\Omega$ is convex domain, or
$\Omega$ is a bounded Lipschitz domain with
$\lim_{r\to 0^+}\Psi_{\Omega}(r)<\dz$ for some positive constant
$\dz=\dz(p,n,\gz, L_{\Omega},d_{\Omega})$ as in  Lemma \ref{Lq-es} (ii).

 Let $u\in C^0(\Omega)$ be a viscosity solution to
\eqref{app-gz} with $f\in L^\infty(\Omega)\cap C^0(\Omega)$.
 We extend
$f$ to whole $\rn$ by setting $\bar f=f$ in $\Omega$ and
$\bar f=0$ on $\rr^n\backslash \Omega$. Similarly, we
set $\bar u=u$ on $\Omega$ and $\bar u=0$ on $\rr^n\backslash \Omega$.
Given $\ez\in (0,1)$, we define
$$g^{\ez}=\bar u\ast \eta_{\ez}\quad{\rm and}\quad
f^{\ez}=\bar f\ast \eta_{\ez},$$
where $\eta_{\ez}$ is standard mollifier.

We consider the following regularized equations.
 For each $\lz\in (0,1)$ and  $\epsilon\in(0,1)$,  consider problems
\begin{align}\label{app-un1}
\left\{
\begin{aligned}
-(|Dv|^2+\ez)^{\frac \gz 2}
\bdz^N_{p,\ez}v +\lz v&=\lz g^{\ez}+f^\ez
&\quad{\rm in}\ &\Omega\\
v&=0&\quad {\rm on}\ &\partial\Omega.
\end{aligned}
\right.
\end{align}Let $u^{\ez}\in C^0(\overline \Omega)$ be the unique viscosity solution to
\eqref{app-un1}; see for example \cite{ar18}.
Moreover, denote by  $\{\Omega_m\}_{m\ge 1}$ satisfies (i)-(iii) as in
Section 6.
For each $m\in\nn$, consider the problem
\begin{align}\label{app-un}
\left\{
\begin{aligned}
-(|Dv|^2+\ez)^{\frac \gz 2}
\bdz^N_{p,\ez}v +\lz v&=\lz g^{\ez}+f^\ez
&\quad{\rm in}\ &\Omega_m\\
v&=0&\quad {\rm on}\ &\partial\Omega_m.
\end{aligned}
\right.
\end{align}

 \begin{lem}\label{re-es}
Given each $\ez\in(0,1)$ and $m\in\nn$, there exists a unique solution
$u^{\ez,m}\in C^\fz(\overline \Omega_m)$ to \eqref{app-un} such that
\begin{align}\label{app-hol}
 \sup_{m\in\nn}\sup_{\ez\in(0,1]}\|u^{\ez,m}\|_{C^{0,\alpha}(\overline\Omega_m )}\le C[\||f |+\lz|u|\|^{\frac{1}{\gz+1}}_{L^\fz(\Omega)}+1]
\end{align}
where $C=C(p,n,\gz,d_{\Omega},L_{\Omega})$ and
$\az=1-\frac{n}{(2^{\star})^{k_0+1}(\gz+1)}$ for $k_0>\ln \frac{n}{2^{\star}(\gz+1)}$.
\end{lem}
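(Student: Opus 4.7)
The plan is to proceed in three steps: (1) existence, uniqueness and $C^\fz(\overline\Omega_m)$ regularity of $u^{\ez,m}$ for each fixed $(\ez,m)$; (2) a uniform $L^\fz$ bound via a barrier argument; and (3) a uniform H\"older estimate by invoking the results of Section 5. For step (1), for every fixed $\ez>0$ the operator $v\mapsto -(|Dv|^2+\ez)^{\gz/2}\bdz^N_{p,\ez}v+\lz v$ is uniformly elliptic on bounded gradients with smooth coefficients, the forcing $\lz g^\ez+f^\ez$ lies in $C^\fz(\overline\Omega_m)$, and $\partial\Omega_m$ is smooth. I would first obtain an a priori $L^\fz$ gradient bound using \cite[Theorem 4.3]{cm14} exactly as in the proof of Lemma \ref{ex-l2}, then invoke the Leray-Schauder degree theory for quasilinear elliptic equations to produce a classical solution, which bootstraps to $C^\fz(\overline\Omega_m)$ by Schauder theory. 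The strict monotonicity induced by $\lz v$ with $\lz>0$ yields uniqueness via the comparison principle \cite[Proposition 2.5]{bd09}.

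For step (2), I would adapt the barrier construction of Lemma \ref{ex-l2}. Fixing $y\in\partial\Omega_m$ and setting $w(x):=K[d_{\Omega_m}-|x-y|]$ with
$$K:=1+\Big(\frac{2d_{\Omega}}{n-1}\big[\|f\|_{L^\fz(\Omega)}+\lz\|u\|_{L^\fz(\Omega)}\big]\Big)^{1/(\gz+1)},$$
the same computation as in Lemma \ref{ex-l2} yields $-(|Dw|^2+\ez)^{\gz/2}\bdz^N_{p,\ez}w\ge \|\lz g^\ez+f^\ez\|_{L^\fz(\Omega_m)}$, while $\lz w\ge 0$ on $\Omega_m$. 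Hence $w$ is a supersolution of \eqref{app-un}, and the comparison with $\pm w$ produces
$$\|u^{\ez,m}\|_{L^\fz(\Omega_m)}\le Kd_{\Omega_m}\le C\big(1+\||f|+\lz|u|\|_{L^\fz(\Omega)}^{1/(\gz+1)}\big),$$
where I use $\|f^\ez\|_{L^\fz(\Omega_m)}\le\|f\|_{L^\fz(\Omega)}$ and $\|g^\ez\|_{L^\fz(\Omega_m)}\le\|u\|_{L^\fz(\Omega)}$, both consequences of mollifying the zero extensions.

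For step (3), I would rewrite \eqref{app-un} as $-(|Du^{\ez,m}|^2+\ez)^{\gz/2}\bdz^N_{p,\ez}u^{\ez,m}=\lz g^\ez+f^\ez-\lz u^{\ez,m}$ and apply Lemma \ref{holder-1} when $(1+\gz)2^\star>n$ or Lemma \ref{holder-2} when $(1+\gz)2^\star\le n$ to the smooth function $u^{\ez,m}$ on $\Omega_m$. The uniformity conditions (i)-(iii) on $\{\Omega_m\}$ ensure that the constants depend only on $p,n,\gz,d_{\Omega},L_{\Omega}$. The $L^\fz$ norm of the right-hand side is at most $\|f\|_{L^\fz(\Omega)}+\lz\|u\|_{L^\fz(\Omega)}+\lz\|u^{\ez,m}\|_{L^\fz(\Omega_m)}$, and in the case $\gz>p-2$ the H\"older lemmas also produce an extra $\|u^{\ez,m}\|_{L^2(\Omega_m)}$ term; both are controlled via the $L^\fz$ bound from step (2), producing \eqref{app-hol}.

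The main obstacle is the apparent circular dependence on $u^{\ez,m}$ that appears when the $\lz v$ term is moved to the right-hand side, since the H\"older estimates of Section 5 then see the unknown itself on both sides. This is resolved by carrying out step (2) first and choosing the barrier constant $K$ to absorb both $f^\ez$ and $\lz g^\ez$ simultaneously in terms of data on $\Omega$ only; once the resulting $L^\fz$ bound on $u^{\ez,m}$ is in hand, the H\"older step closes without feedback. A secondary technical point is that $f$ and $u$ live on $\Omega$ while the estimate is stated on $\Omega_m\supset\Omega$, which is precisely what the assumptions (i)-(iii) are designed to handle.
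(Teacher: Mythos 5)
Your proposal is correct and follows essentially the same three-step structure as the paper's proof: reduction of existence/uniqueness to an a priori estimate (as in Lemma \ref{ex-l2}, with the gradient bound from \cite[Theorem 4.3]{cm14} and comparison principle \cite[Proposition 2.5]{bd09} for uniqueness), the barrier $K[d_{\Omega_m}-|x-y|]$ with the same choice of $K$ absorbing both $f^\ez$ and $\lz g^\ez$ via $\lz v\ge 0$, and then feeding the resulting $L^\fz$ bound into the Section~5 H\"older lemmas applied to the rewritten equation with right-hand side $G_{\ez,m}=\lz(g^\ez-u^{\ez,m})+f^\ez$. The observations you flag as potential obstacles (the apparent circularity, and the $\Omega$ versus $\Omega_m$ domain mismatch handled by hypotheses (i)--(iii)) are exactly the points the paper addresses implicitly in the same way.
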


\begin{proof}
 As in Lemma \ref{ex-l2}, the existence of smooth solution to \eqref{app-un} follows from a priori estimate \eqref{app-hol}. Also, the uniqueness is a direct consequence of  a comparison principle  in \cite[Proposition 2.5]{bd09}. Hence, we only need to prove \eqref{app-hol}.

We first establish the following a priori $L^\fz$-estimate:
\begin{align}\label{app-lf}
\|u^{\ez,m}\|_{L^\fz(\Omega_m)}\le C[\||f |+\lz|u|\|^{\frac{1}{\gz+1}}_{L^\fz(\Omega)}+1],
\end{align}
where $C=C(p,n,\gz,d_{\Omega},L_{\Omega})$. One may assume that
$0\in \partial\Omega_m$. Define
$$v(x)=K(d_{\Omega_m}-|x|)$$
where
$$K=1+\left(\frac{2d_{\Omega}}{n-1}\||f|+\lz|u|\|_{L^\fz(\Omega)}\right)^{\frac 1{\gz+1}}$$
By a calculation as in Lemma \ref{ex-l2} and $\lz v\ge 0$ on $\Omega_m$, we have
$$-(|Dv|^2+\ez)^{\frac \gz 2}\bdz^N_{p,\ez}v+\lz v\ge d_{\Omega_m}^{-1}\frac{n-1}2K^{\gz+1}
$$
Now by our choice of $K$, using a comparison principle
in \cite[Proposition 2.5]{bd09} one has
$$u^{\ez,m}\le v\quad{\rm in}\quad \Omega_m.$$
Similarly, we also have $u^{\ez,m}\ge -v$ on
$\Omega_m$. This proves \eqref{app-lf}.

Write
 $$G_\ez= \lz (g^\ez-u^{\ez,m})+f^\ez.$$
Thanks to \eqref{app-lf}, by an argument of in Lemma \ref{ex-l2} we conclude
\eqref{app-hol} as desired.
\end{proof}
By a stability of viscosity solutions, we have:
\begin{lem}\label{conver}
Let $\az\in (0,1)$ as in Lemma \ref{re-es}.
It holds that
   \begin{align}\label{un-con1}
\mbox{ $u^{\ez,m}\to u^\ez$ in   $C^{0,\alpha}(\overline \Omega)$
as $m\to \fz$ and $Du^{\ez,m}\to Du^{\ez}$ in
$C^0_{\loc}(\Omega)$ as $m\to \fz$.}
\end{align}
Moreover, we have
   \begin{align}\label{un-con2}
\mbox{ $u^{\ez}\to u$ in   $C^{0,\alpha}(\overline \Omega)$
as $\ez\to 0$.}
\end{align}
\end{lem}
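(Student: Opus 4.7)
The plan is to combine the uniform global H\"older estimate from Lemma \ref{re-es} with interior elliptic regularity and the standard stability theorem for viscosity solutions, using the comparison principle \cite[Proposition 2.5]{bd09} (valid since $\lambda>0$) at both stages to identify limits uniquely.

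For the first convergence, I would fix $\ez\in(0,1)$ and observe that \eqref{app-un} is a nondegenerate uniformly elliptic quasilinear equation with smooth right-hand side $\lambda g^\ez+f^\ez$. Classical Schauder-type quasilinear theory then upgrades the uniform $C^{0,\alpha}(\overline{\Omega_m})$ bound from Lemma \ref{re-es} to uniform $C^{2,\beta}_{\loc}(\Omega)$ bounds on $u^{\ez,m}$ (with constants depending on $\ez$ but not on $m$). By Arzel\`a--Ascoli, a subsequence satisfies $u^{\ez,m}\to u^{\ez,\ast}$ uniformly on $\overline\Omega$ and $Du^{\ez,m}\to Du^{\ez,\ast}$ in $C^0_{\loc}(\Omega)$, where $u^{\ez,\ast}\in C^{0,\alpha}(\overline\Omega)$ with $u^{\ez,\ast}=0$ on $\partial\Omega$. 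The standard stability theorem for viscosity solutions, applied to the continuous operator $-(|Dv|^2+\ez)^{\gz/2}\Delta^N_{p,\ez}v+\lambda v$, identifies $u^{\ez,\ast}$ as a viscosity solution to \eqref{app-un1}. The comparison principle then forces $u^{\ez,\ast}=u^\ez$, so every subsequential limit coincides with $u^\ez$ and the full sequence converges.

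For the second convergence, passing $m\to\infty$ in \eqref{app-hol} yields $\|u^\ez\|_{C^{0,\alpha}(\overline\Omega)}\le C$ with $C$ independent of $\ez$. Arzel\`a--Ascoli gives a subsequence $u^\ez\to u^\ast$ uniformly on $\overline\Omega$, with $u^\ast\in C^{0,\alpha}(\overline\Omega)$ and $u^\ast=0$ on $\partial\Omega$. Since $f\in C^0(\Omega)$ and $u\in C^0(\Omega)$, standard mollifier properties give $f^\ez\to f$ and $g^\ez\to u$ in $C^0_{\loc}(\Omega)$. Applying the stability of viscosity solutions for the singular limit $-(|Dv|^2+\ez)^{\gz/2}\Delta^N_{p,\ez}v\to -|Dv|^{\gz}\Delta^N_p v$ (as formulated in \cite{bd09,ar18}), $u^\ast$ solves
\begin{equation*}
-|Du^\ast|^{\gz}\Delta^N_p u^\ast+\lambda u^\ast=\lambda u+f\quad\text{in}\ \Omega,\qquad u^\ast=0\ \text{on}\ \partial\Omega.
\end{equation*}
The given $u$ also satisfies this equation (add $\lambda u$ to both sides of $-|Du|^{\gz}\Delta^N_p u=f$), so the comparison principle with $\lambda>0$ gives $u^\ast=u$, and the full sequence converges.

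The main technical obstacle is the stability step as $\ez\to 0$ in the singular range $-1<\gz<0$: the regularized operator is continuous in $(Dv,D^2v)$, but $-|Dv|^{\gz}\Delta^N_p v$ is singular at critical points, which is precisely why Definition \ref{d1}(iii) uses a $\liminf$ formulation. Verifying that the subsolution/supersolution property passes through to this weaker definition needs the test-function argument of \cite{bd09,ar18}, but no genuinely new input is required since the $C^{0,\alpha}$ convergence is strong enough for their stability theorems to apply directly.
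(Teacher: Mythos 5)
Your proof is correct and follows essentially the same structure as the paper's: uniform compactness from Lemma \ref{re-es} plus Arzel\`a--Ascoli, stability of viscosity solutions to identify the limit, and the comparison principle (valid since $\lambda>0$) to force uniqueness and hence full-sequence convergence. The one place you differ slightly is in obtaining the interior $Du^{\ez,m}\to Du^\ez$ convergence: you invoke Schauder theory to jump from $C^{0,\alpha}$ to $C^{2,\beta}_{\loc}$ bounds, whereas the paper simply cites the local $C^{1,\beta}$ regularity of \cite[Theorem 1.1]{ar18}. Your route works but is a bit compressed --- a quasilinear Schauder bootstrap starting from $C^{0,\alpha}$ first needs a local gradient bound before the equation becomes uniformly elliptic with H\"older-continuous coefficients, and it is exactly that intermediate step which the cited Attouchi--Ruosteenoja result provides, with constants uniform not only in $m$ but also in $\ez$.
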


\begin{rem}
It is not clear whether $Du^{\ez}\to Du$ in $C^{0,\az}_{\loc}(\Omega)$ as
$\ez\to 0$.
\end{rem}

 \begin{proof}
 Thanks to \eqref{app-hol} and $u^{\ez,m}=0$ on $\partial\Omega_m$, via a stability of viscosity solution one has
$u^{\ez,m}\to v^{\ez}$ in $C^{0,\az}(\overline \Omega)$ as $m\to \fz$ for
some $v^{\ez}\in C^0(\overline \Omega)$.
Observe that $v^{\ez}=0$ on $\partial\Omega$. By the uniqueness, we have $u^\ez=v^\ez$.
By \eqref{app-hol} again and \cite[Theorem 1.1]{ar18}, we know that $u^{\ez,m}\in C^1_{\loc}(\Omega)$ uniformly
in $m$ and hence $Du^{\ez,m}\to Du^{\ez}$ in $C^0_{\loc}(\Omega)$ as $m\to \fz$.
This proves \eqref{un-con1}. Similarly, by \eqref{app-hol} again
there exists $v\in C^0(\overline \Omega)$ so that $u^{\ez}\to v$
in $C^{0,\az}(\overline \Omega)$. Moreover, $v$ is a viscosity solution of
$$-|Dv|^{\gz}\bdz^N_p v+\lz v=\lz u+f\quad{\rm in}\quad
\Omega; \ v=0\quad{\rm on}\quad \partial\Omega$$
according to $g^{\ez}\to u$ in $C^0(\Omega)$ and $f^{\ez}\to f$ in $C^0(\Omega)$ as
$\ez\to 0$. So the uniqueness also implies that $v=u$.
Hence we complete this proof.

 \end{proof}
We also need to the following global convergence property.
\begin{lem}\label{con-gr}

(i) 
 Given any $\ez\in(0,1)$, it holds that
\begin{align}\label{gr-co1}
\mbox{ $(|Du^{\ez,m}|^2+\ez)^{\frac \gz 2}Du^{\ez,m}\to (|Du^{\ez}|^2+\ez)^{\frac \gz 2}Du^{\ez}$ in   $L^2(\Omega)$
as $m\to \fz$}
\end{align}
(ii) 
It holds that 
 $Du^{\ez}\to
Du$ in $L^q(\Omega)
$ for all $q<(1+\gamma)2^\star$ and that
\begin{align}\label{gr-co2}
\mbox{ $(|Du^{\ez}|^2+\ez)^{\frac \gz 2}Du^{\ez}\to |Du|^{\gz}Du$ in   $L^q(\Omega)$
for all $q<2^\star$ as $\ez\to 0$.}
\end{align}

\end{lem}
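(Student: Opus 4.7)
The strategy is to lift the a.e./local convergence statements of Lemma \ref{conver} to the global $L^q$ convergences claimed, using uniform bounds from Theorem \ref{thm-v} and Vitali's convergence theorem. For part (ii) an extra ingredient is required: passing from a.e.\ convergence of the nonlinear gradient $(|Du^\ez|^2+\ez)^{\gz/2}Du^\ez$ to a.e.\ convergence of $Du^\ez$ itself, achieved by inverting a one-parameter family of homeomorphisms of $\rr^n$.

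For (i), I fix $\ez\in(0,1)$ and apply Theorem \ref{thm-v} to the smooth function $v = u^{\ez,m}$ on $\Omega_m$. Since $u^{\ez,m}$ solves \eqref{app-un}, one has $(|Du^{\ez,m}|^2+\ez)^{\gz/2}\bdz^N_{p,\ez}u^{\ez,m} = \lz u^{\ez,m}-\lz g^\ez-f^\ez$, whose $L^2(\Omega_m)$-norm is uniformly controlled in $m$ by Lemma \ref{re-es} and the boundedness of $f^\ez, g^\ez$. Combined with the uniformity of $L_{\Omega_m}, d_{\Omega_m}$ and $\Psi_{\Omega_m}$ in the construction of the $\Omega_m$, Theorem \ref{thm-v} yields
$$
\|Du^{\ez,m}\|_{L^{(1+\gz)2^\star}(\Omega_m)}^{1+\gz}+\|D[(|Du^{\ez,m}|^2+\ez)^{\gz/2}Du^{\ez,m}]\|_{L^2(\Omega_m)} \le C(\ez,\lz)
$$
uniformly in $m$. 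Sobolev embedding upgrades this to a uniform $L^{2^\star}(\Omega_m)$-bound on the nonlinear gradient. Lemma \ref{conver} supplies a.e.\ convergence $Du^{\ez,m}\to Du^\ez$ on $\Omega$, and since $2^\star>2$ the uniform $L^{2^\star}$-bound gives uniform integrability of the squares; Vitali's theorem then converts the a.e.\ convergence into strong $L^2(\Omega)$ convergence, as claimed.

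For (ii), I first pass the bound of (i) to the limit to obtain, by lower semicontinuity,
$$
\|Du^\ez\|_{L^{(1+\gz)2^\star}(\Omega)}^{1+\gz}+\|D[(|Du^\ez|^2+\ez)^{\gz/2}Du^\ez]\|_{L^2(\Omega)} \le C
$$
uniformly in $\ez\in(0,1]$ (the $L^\infty$ bound of Lemma \ref{re-es} and the data bounds on $f^\ez, g^\ez$ are $\ez$-independent). Weak compactness and Rellich-Kondrachov yield, along a subsequence, $Du^\ez\rightharpoonup V$ in $L^{(1+\gz)2^\star}(\Omega)$ and $(|Du^\ez|^2+\ez)^{\gz/2}Du^\ez\to H$ strongly in $L^2(\Omega)$ and a.e.\ in $\Omega$. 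The map $\Phi_\ez(a) := (|a|^2+\ez)^{\gz/2}a$ is a homeomorphism of $\rr^n$ for each $\gz>-1$, with $\Phi_\ez^{-1}$ converging locally uniformly on $\rr^n\setminus\{0\}$ to $\Phi_0^{-1}(b) = |b|^{-\gz/(\gz+1)}b$; continuity of the inversion at $0$ uniformly in $\ez$ follows from the elementary bound $|\Phi_\ez(a)| \gs |a|^{\gz+1}-C\ez^{(\gz+1)/2}$ valid for all $\gz>-1$. Applying $\Phi_\ez^{-1}$ to the a.e.\ convergent sequence gives $Du^\ez\to \Phi_0^{-1}(H)$ a.e.\ in $\Omega$.

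Since $u^\ez\to u$ uniformly on $\overline\Omega$ by Lemma \ref{conver}, $Du^\ez\to Du$ in the sense of distributions; matching with the weak and a.e.\ limits forces $V = Du$ and $H = |Du|^\gz Du$, hence $Du^\ez\to Du$ a.e. Finally, the uniform $L^{(1+\gz)2^\star}$-bound on $Du^\ez$ yields uniform integrability of $|Du^\ez|^q$ for every $q<(1+\gz)2^\star$, so Vitali's theorem converts the a.e.\ convergence into $L^q(\Omega)$ convergence; the analogous use of the uniform $L^{2^\star}$-bound on $(|Du^\ez|^2+\ez)^{\gz/2}Du^\ez$ coming from Sobolev embedding yields $L^q$ convergence of the weighted gradients for every $q<2^\star$. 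The main obstacle is the a.e.\ convergence of $Du^\ez$ itself, since the $W^{1,2}$ control is only on the nonlinear gradient; the key technical input is precisely that $\Phi_\ez$ remains a homeomorphism with controlled inversion near the origin uniformly in $\ez$, which is where the assumption $\gz>-1$ enters in an essential way.
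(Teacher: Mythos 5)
Your proof of part (i) is essentially the paper's own argument dressed in Vitali's language: the paper splits $\int_\Omega = \int_{\Omega'} + \int_{\Omega\setminus\Omega'}$ and controls the tail by H\"older and the uniform $L^{2^\star}$ bound from Lemma \ref{Lq-es}, which is precisely the uniform-integrability input Vitali packages abstractly. No substantive difference.

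Part (ii) is where you genuinely diverge from the paper, and your route is sound. The paper runs a Minty--Browder-type monotonicity argument: it applies the vector inequalities of Lemma \ref{in-vec} to $(|Du^\ez|^2+\ez)^{\gz/2}Du^\ez - |Du|^{\gz}Du$, integrates the resulting pointwise inequality \eqref{feqq} over $\Omega$, and lets the weak $L^{q_0}$ convergence of $Du^\ez$ plus the strong $L^{2^\star}$ convergence of the nonlinear gradient kill the right-hand side; this is then upgraded to strong $L^{\gz+2}$ convergence of $Du^\ez$. The integration of \eqref{feqq} over $\Omega$ is exactly where $\gz>-\tfrac{4}{n+2}$ enters: one needs $\tfrac{1}{2^\star}+\tfrac{1}{q_0}<1$, i.e.\ $q_0>\gz+2$, which unravels to $\gz>-\tfrac{4}{n+2}$. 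Your argument instead inverts the pointwise map $\Phi_\ez(a)=(|a|^2+\ez)^{\gz/2}a$, passing from a.e.\ convergence of the nonlinear gradient (available from Rellich on the uniform $W^{1,2}$ bound) to a.e.\ convergence of $Du^\ez$ directly, and then applies Vitali twice. Your claim that $\Phi_\ez$ is a homeomorphism with $\Phi_\ez^{-1}\to\Phi_0^{-1}$ locally uniformly is correct for all $\gz>-1$ — the radial profile $r\mapsto(r^2+\ez)^{\gz/2}r$ has derivative $(r^2+\ez)^{\gz/2-1}[(1+\gz)r^2+\ez]>0$, and the coercivity bound $|\Phi_\ez(a)|\gs|a|^{\gz+1}-C\ez^{(\gz+1)/2}$ gives equicontinuity of the inverses at the origin. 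What each approach buys: the paper's monotonicity argument is more in line with classical quasilinear theory and transfers to weak formulations; your inversion argument is more elementary, sidesteps the constraint $q_0>\gz+2$, and is in effect the "$Du^\ez\to Du$ a.e." route that Remark \ref{zzzz1} suggests as a possible way to weaken the hypothesis. One small inaccuracy to flag: you write that "the assumption $\gz>-1$ enters in an essential way" as though that were the only constraint used, but your identification of the weak limit $V=Du$ (matching weak $L^{q_0}$ and distributional limits) and the Vitali step for $Du^\ez$ still require $q_0=(1+\gz)2^\star>1$, so the hypothesis $\gz>-\tfrac{4}{n+2}$ is not entirely dispensable in your proof either — though it is used less sharply than in the paper's.
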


\begin{proof}
{\it Proof of (i)}.
 Define
$$G_{\ez,m}=\lz (g^{\ez}-u^{\ez,m})+f^{\ez}.$$
It follows by \eqref{app-hol} that
\begin{align}\label{un-lfz}
\sup_{\ez\in (0,1),m>1}\|G_{\ez,m}\|_{L^\fz(\Omega_m)}<+\fz.
\end{align}
Applying $u^{\ez,m}$ to $v$ and $\Omega_m$ to $\Omega$ in Lemma \ref{Lq-es}, with
the conditions (i)-(iii) of $\Omega_m$ as in Sections 5
one concludes that
\begin{align}\label{gr-eq1}
\left(\int_{\Omega_m}(|Du^{\ez,m}|^2+\ez)^{\frac{2^{\star}}2(\gz+1)}\,dx\right)^{\frac 1{2^{\star}(\gz+1)}}
&\le C\left(\int_{\Omega_m} G_{\ez,m}^2\,dx\right)^{\frac 1{2(\gz+1)}}
+C\ez^{\frac 12},
\end{align}
where $C=C(p,n,\gz,L_{\Omega},d_{\Omega})$. From this and \eqref{un-lfz} we know that
$(|Du^{\ez,m}|^2+\ez)^{\frac \gz 2}Du^{\ez,m}\in L^{2^{\star}}(\Omega)$ uniformly
in $m>1$ and $\ez\in (0,1)$.  For each smooth domain $\Omega'\Subset \Omega$, via H\"older inequality we get
\begin{align*}
&\int_{\Omega}\left|(|Du^{\ez,m}|^2+\ez)^{\frac \gz 2}Du^{\ez,m}-(|Du^{\ez}|^2+\ez)^{\frac \gz 2}Du^{\ez}\right|^2\,dx\\
&=\int_{\Omega\backslash\Omega'}\left|(|Du^{\ez,m}|^2+\ez)^{\frac \gz 2}Du^{\ez,m}-(|Du^{\ez}|^2+\ez)^{\frac \gz 2}Du^{\ez}\right|^2\,dx\\
&\quad
+\int_{\Omega'}\left|(|Du^{\ez,m}|^2+\ez)^{\frac \gz 2}Du^{\ez,m}-(|Du^{\ez}|^2+\ez)^{\frac \gz 2}Du^{\ez}\right|^2\,dx\\
&\le |\Omega\backslash\Omega'|^{\frac{2^{\star}-2}{2^{\star}}}
\left(\int_{\Omega\backslash\Omega'}\left|(|Du^{\ez,m}|^2+\ez)^{\frac \gz 2}Du^{\ez,m}-(|Du^{\ez}|^2+\ez)^{\frac \gz 2}Du^{\ez}\right|^{2^{\star}}\,dx
\right)^{\frac{2}{2^{\star}}}\\
&\quad
+\int_{\Omega'}\left|(|Du^{\ez,m}|^2+\ez)^{\frac \gz 2}Du^{\ez,m}-(|Du^{\ez}|^2+\ez)^{\frac \gz 2}Du^{\ez}\right|^2\,dx.
\end{align*}
By \eqref{un-con1}, then the second term of right-hand side tends to zero as
$m\to \fz$. Then \eqref{gr-co1} follows by letting $\Omega'\to \Omega$ and
$(|Du^{\ez,m}|^2+\ez)^{\frac \gz 2}Du^{\ez,m}\in L^{2^{\star}}(\Omega)$ uniformly
in $m>1$.

{\it Proof of (ii)}.
 Set
$$q_0=2^{\star}(\gz+1).$$
When $n=2$, we can always assume that  $2^\star>\frac2{1+\gamma}$ and hence $ q_0>2$.
 When $n\ge3$, since  $\gz>-\frac{4}{n+2}$ and  $2^\star-1=\frac{n+2}{n-2}$,
 we have
\begin{align}\label{ind-2}
 q_0=2^{\star}( 1+\gamma) = \gamma +2^{\star}+(2^{\star}-1)\gamma >  \gamma +
 \frac{2n}{n-2}-\frac{n+2}{n-2} \frac{4}{n+2}=\gamma+2,
\end{align}
in particular, $ q_0>1$ and
$2^\star>\frac{q_0}{q_0-1}$.

Write $q_0^\star=\frac{nq_0}{n-q_0}$ if $ q_0<n$ and $q_0^\star=q$ for some sufficiently large $q\in(2,\infty)$ if $q_0\ge n$.
Observe  that  $q_0^\star>2$; indeed, if   $q_0< n$, by $q_0>\gamma+2$ we have
 \begin{align}\label{ind-1}
 \frac{nq_0}{n-q_0}> \frac{n(2-\frac4{n+2}) }{n-(2-\frac4{n+2})}=\frac{n(2n+4-4)}{n(n+2)-(2n+4-4)}
 =2\quad{\rm if }\quad q_0<n.
 \end{align}

Recall that  Lemma \ref{Lq-es} tells us that
 $D u^{\ez}\in L^{q_0}(\Omega)$
 uniformly in
$\ez\in (0,1)$.
Thanks to \eqref{ind-1} and \eqref{un-con2}, we conclude that
\begin{align}\label{xx2}
u^{\ez}\to u \quad{\rm in}\quad L^{2}(\Omega)
\quad{\rm and}\quad Du^{\ez}\to
Du\quad{\rm weakly\ in}\quad L^{q_0}(\Omega)
\end{align}
as $\ez\to0 $ (up to some subsequence).
Moreover,
we claim that
\begin{align}\label{xx3}
&\frac 1{C}\int_{\Omega}(|Du^{\ez}|^2+|Du|^2+\ez)^{\frac \gz 2}|Du^{\ez}-Du|^{2}\,dx \to 0
\end{align}
as $\ez\to0$.

To see the claim \eqref{xx3}, by \eqref{in-vec1} and \eqref{in-vec3} in Lemma \ref{in-vec}, one has
\begin{align}\label{feqq}
&(|Du^{\ez}|^2+|Du|^2+\ez)^{\frac \gz 2}|Du^{\ez}-Du|^{2}\nonumber\\
&\le C(\gz)\left((|Du|^2+\ez)^{\frac \gz2}Du-(|Du^{\ez}|^2+\ez)^{\frac \gz2}Du^{\ez}\right)
\cdot (Du-Du^{\ez}).
\end{align}
The fact $\gz+2<q_0$ allows us to integrate over $\Omega$ and then obtain
\begin{align}\label{xx1}
&\frac 1{C}\int_{\Omega}(|Du^{\ez}|^2+|Du|^2+\ez)^{\frac \gz 2}|Du^{\ez}-Du|^{2}\,dx\nonumber\\
&\le
\left(
\int_{\Omega}(|Du|^2+\ez)^{\frac \gz2}|Du|^{2}\,dx-\int_{\Omega}(|Du|^2+\ez)^{\frac \gz2}Du\cdot Du^{\ez}\,dx\right)\nonumber\\
&\quad
+\int_{\Omega}(|Du^{\ez}|^2+\ez)^{\frac \gz2}Du^{\ez}\cdot (Du^{\ez}-Du)\,dx.
\end{align}

Observe that $Du\in L^{q_0}(\Omega)$ implies   $|Du|^{\gz+1}\in L^{2^\star}(\Omega)$.
Since  $\gz>-\frac{4}{n+2}$ implies $2^\star>\frac{q_0}{q_0-1}$, we have
 $|Du|^{\gz+1}\in L^{\frac{q_0}{q_0-1}}(\Omega)$.
  Therefore, by
$Du^{\ez}\to Du$ weakly in $L^{q_0}(\Omega)$ and
$(|Du|^2+\ez)^{\frac \gz2}Du\to |Du|^{\gz}Du$ uniformly convergence
one has that
$$\int_{\Omega}(|Du|^2+\ez)^{\frac \gz2}Du\cdot Du^{\ez}\,dx\to \int_{\Omega}|Du|^{\gz+2}\,dx.$$
Then the first term in the right-hand sides of \eqref{xx1} tends to zero as
$\ez\to 0$.

It remains to show that the second term in the right-hand sides of \eqref{xx1} converges to $0$ as $ \ez\to0$.
Recall that $(|Du^{\ez}|^2+\ez)^{\frac \gz2}Du^{\ez}\in W^{1,2}(\Omega)\cap L^{2^{\star}}(\Omega)
$ uniformly in $\ez\in (0,1)$. There exists
$U\in W^{1,2}(\Omega)\cap L^{2^{\star}}(\Omega)$ such that
\begin{align}\label{str-con}
(|Du^{\ez}|^2+\ez)^{\frac \gz2}Du^{\ez}\to U\quad{\rm in}\quad L^{q}(\Omega)\quad
{\rm for\ all}\quad q<2^{\star}\ \mbox{and weakly in }\ W^{1,2}(\Omega).
\end{align}
as $ \ez\to0$ (up to some subsequence).
By \eqref{str-con} with $q=\frac{q_0}{q_0-1}$,
 $\frac{q_0}{q_0-1}<2^{\star}$ as given by \eqref{ind-2}, and \eqref{xx2}, we conculde that
\begin{align*}
&\int_{\Omega}(|Du^{\ez}|^2+\ez)^{\frac \gz2}Du^{\ez}\cdot (Du^{\ez}-Du)\,dx\to0
\end{align*}
as $\ez\to0$  (up to some subsequence) as desired. 

Next we show that \begin{align}\label{xx4}
\mbox{$Du^{\ez}\to Du$ in $L^{2+\gz}(\Omega)$ as $\ez\to0$ (up to some subsequence). }
\end{align}
If $\gz>0$,  \eqref{xx4} follows from  \eqref{xx3} and
$$(|Du^{\ez}|^2+|Du|^2+\ez)^{\frac \gz 2}|Du^{\ez}-Du|^{2}\ge C(\gz)
|Du^{\ez}-Du|^{\gz+2}.$$
 If $\gz<0$,   via H\"older inequality
\begin{align*}
&\int_{\Omega}|Du^{\ez}-Du|^{\gz+2}\,dx\\
&\le \left(\int_{\Omega}(|Du^{\ez}|^2+|Du|^2+\ez)^{\frac \gz 2}|Du^{\ez}-Du|^{2}\,dx\right)^{\frac{\gz+2}2}
\left(\int_{\Omega} (|Du^{\ez}|^2+|Du|^2+\ez)^{\frac{\gz+2}2}\,dx\right)^{-\frac \gz 2}.
\end{align*}
From this, then \eqref{xx4} follows  from  \eqref{xx3},
and  $Du,\, Du^\ez\in L^{\gamma+2}(\Omega)$ uniformly in $ \ez>0$.

By \eqref{str-con} and \eqref{xx4}, one has  $ U=|Du|^\gamma Du$ as desired.
This further implies that $Du^\ez\to Du$ in $L^q(\Omega)$
 as $\ez\to0$ (up to some subsequence) for all $q<q_0$.
\end{proof}

\begin{rem}\rm \label{zzzz1}
In the case $n=2$,  since $\frac4{n+2}=1$,
$ \gamma>-1$ then implies
 $\gamma>-\frac4{n+2}$ needed  in Lemma  \ref{con-gr} (ii) .

 In the case  $n\ge3$,
 the addition restriction $\gamma>-\frac4{n+2}$  in Lemma  \ref{con-gr} (ii)
 comes from our approach, where we require $2^\star(1+\gamma)>\gamma+2$.
 A  possible way to remove the restriction $\gamma>-\frac4{n+2}$ is to prove
  $Du^{\ez}\to Du$ in $C^{0,\az}_{\loc}(\Omega)$ as $\ez\to 0$, which will guarantee that
  \eqref{gr-co2}.
  However, this needs much more efforts. We leave this for further study.

\end{rem}

Now we are ready to prove Theorem \ref{th-re0}.
\begin{proof}[Proof of Theorem \ref{th-re0}]

The following cases are delineated.

 (i) $\Omega$ is convex. In this case, we also have that $ \Omega_m$ is convex such
that $\lim_{m\to \fz}|\Omega_m \backslash \Omega|=0$. Then \eqref{th1-1} in Theorem \ref{thm-v} give us that
\begin{align*}
\|(|Du^{\ez,m}|^2+\ez)^{\frac \gz 2}Du^{\ez,m}\|^2_{W^{1,2}(\Omega)}\le
C\int_{\Omega_m}(f^{\ez}+\lz (u^{\ez,m}-g^{\ez}))^2\,dx+C\ez^{\gz+1},
\end{align*}
where $C=C(p,n,\gz)$.
Set $$G_{\ez,m}=\lz (g^{\ez}-u^{\ez,m})+f^{\ez}.$$
Then by  \eqref{un-lfz} we have that $(|Du^{\ez,m}|^2+\ez)^{\frac \gz 2}Du^{\ez,m}\in
W^{1,2}(\Omega)$ uniformly in $m>1$. Hence using a weak compactness of Sobolev space
and \eqref{gr-co1} we conclude that
\begin{align*}
\|(|Du^{\ez}|^2+\ez)^{\frac \gz 2}Du^{\ez}\|^2_{W^{1,2}(\Omega)}&\le \limsup_{m\to \fz}
\|(|Du^{\ez,m}|^2+\ez)^{\frac \gz 2}Du^{\ez,m}\|^2_{W^{1,2}(\Omega)}\\
&\le
C\limsup_{m\to \fz}\int_{\Omega_m}(G_{\ez,m})^2\,dx+C\ez^{\gz+1}.
\end{align*}
Note that $\|G_{\ez,m}\|_{L^\fz(\Omega_m)}<\fz$ uniformly in $m$ and
$|\Omega_m\backslash \Omega|\to 0$ as $m\to \fz$. Hence
by $u^{\ez,m}\to u^{\ez}$ in $C^0(\overline \Omega)$ one has
\begin{align*}
\limsup_{m\to \fz}\int_{\Omega_m}(G_{\ez,m})^2\,dx&=\limsup_{m\to \fz}\int_{\Omega_m\backslash \Omega}(G_{\ez,m})^2\,dx+\limsup_{m\to \fz}\int_{\Omega}(G_{\ez,m})^2\,dx\\
&\quad =
\int_{\Omega}(f^{\ez}+\lz (u^{\ez}-g^{\ez}))^2\,dx,
\end{align*}
which leads to
\begin{align*}
\|(|Du^{\ez}|^2+\ez)^{\frac \gz 2}Du^{\ez}\|^2_{W^{1,2}(\Omega)}\le
C\int_{\Omega}(f^{\ez}+\lz (u^{\ez}-g^{\ez}))^2\,dx+C\ez^{\gz+1}.
\end{align*}
From this, it follows by \eqref{un-lfz} again and \eqref{gr-co2} that
\begin{align*}
\||Du|^{\gz}Du\|^2_{W^{1,2}(\Omega)}&\le \limsup_{\ez\to 0}
\|(|Du^{\ez}|^2+\ez)^{\frac \gz 2}Du^{\ez}\|^2_{W^{1,2}(\Omega)}\\
&\le
C\limsup_{\ez\to 0}\int_{\Omega}(f^{\ez}+\lz (u^{\ez}-g^{\ez}))^2\,dx\\
&=C\int_{\Omega}f^2\,dx,
\end{align*}
where we used dominated convergence theorem in the last inequality.

(ii) {\it $\Omega$ is not convex}.
Observe that $f\in L^\fz(\Omega)\cap C^0(\Omega)$. Then all estimates
uniformly in $\ez$ and $m$ and hence this  proof of this is similar to
the case (i)  by using \eqref{th1-2} in Theorem \ref{thm-v} and
 Lemma \ref{con-gr}.

\end{proof}

\begin{rem}\rm \label{rem7.6}
The  viscosity solution  $u$ obtained in Theorem \ref{th-re0} have the following quantitative (local) H\"older regularity.

\begin{enumerate}
\item[(i)] {\it Case  $\Omega$ is convex.}

  If $ (1+\gamma)2^\star>n$, then  $u\in C^{1-\frac n{(1+\gamma)2^\ast}}(\overline \Omega)$ with
$$\|u\|_{C^{1-\frac n{(1+\gamma)2^\ast}}(\overline \Omega)}\le C
\|Du\|_{L^{(1+\gamma)2^\star}(\Omega)}\le  C\|f\|^{\frac1{1+\gamma}}_{L^2(\Omega)}.$$

   If $ (1+\gamma)2^\star\le n$, then
$$\|u\|_{C^{\alpha}(\overline{B_{r/2}(y)} )}\le C \|f\|_{L^\infty (B_{r}(y))}  +C \|f\|_{L^2(\Omega)}\quad\mbox{whenever  $B_{2r}(y)\subset\Omega$} $$
for some $\alpha>0$.
Indeed,  applying Lemma  \ref{local-hol}  to $u^{\ez,m}$ we have
\begin{align}\label{holdereps}
\|u^{\epsilon,m} \|_{C^{0,\az}(B_{r/2}(y))}\le C\|G_{\ez,m}\|
^{1/(\gz+1)}_{L^\fz(B_r(y))}+C\|G_{\ez,m}\|^{\frac 1{\gz+1}}_{L^2(\Omega)}+C\ez^{\frac 12}.
\end{align}
Letting $m\to\infty$ and $\ez\to0$ in order we get the desired result.

\item[(ii)] {\it Case  $\Omega$ is not convex.}

If $\gamma\le p-2$, we have similar results as the convex case.

  If $ \gamma> p-2$ and $ (1+\gamma)2^\star>n$, then  $u\in C^0(\overline \Omega)$
$$\|u\|_{C^{1-\frac n{(1+\gamma)2^\ast}}(\overline \Omega)}\le C
\|Du\|_{L^{(1+\gamma)2^\star}(\Omega)}\le C \|f\|^{1/(1+\gamma)}_{L^2(\Omega)}+C\|u\|^{1/(1+\gamma)}_{L^2(\Omega)}.$$

  If   $ \gamma> p-2$ and $ (1+\gamma)2^\star\le n$, then by a similar argument we also have
$$\|u\|_{C^{\alpha}(\overline{B_{r/2}(y)} )}\le C \|f\|_{L^\infty (B_{r}(y))}  +C \|f\|_{L^2(\Omega)}+C \|u\|^{1/(1+\gamma)}_{L^2(\Omega)}
\quad\mbox{whenever  $B_{2r}(y)\subset\Omega$} $$
for some $\alpha>0$.  The proof is much similar to the convex case.
\end{enumerate}
\end{rem}

\section{Proofs of  Theorem \ref{th-re1} and Theorem \ref{th-re1a}}

We only prove Theorem  \ref{th-re1}; the  proof of Theorem  \ref{th-re1a} is much similar.
Assume that $\Omega$ is a convex and  $f\in L^2(\Omega)$. Let $f^\epsilon\in  C^0(\overline \Omega)
$ such that $ f^\ez\to f$ in $L^2(\Omega)$ as $\ez\to 0$. For $\epsilon>0$, denote by $u^\epsilon\in C^0(\overline \Omega)$ a  viscosity solution to the  approximation problem
\begin{equation}\label{ez-app}
-|Dv|^\gamma \Delta^N_p v=f^\epsilon \ \mbox{in $\Omega$};v=0 \ \mbox{on $\partial\Omega$.}
\end{equation}
  Applying   Theorem \ref{th-re0}, we know that
  \begin{align}\label{th1-1eps0}Du^\ez\in L^{(1+\gamma)2^\star}(\Omega) \ \mbox{   and}\
  |Du^\epsilon|^\gamma Du^\epsilon\in W^{1,2}(\Omega)\end{align} with
  \begin{align}\label{th1-1eps}
\|Du^\ez\|^{\gz+1}_{L^{(1+\gz)2^\star}(\Omega)}+
\|D[|Du^\ez|^{\gz}Du^\ez]\|_{L^2(\Omega)}\le C\|f ^\ez\|_{L^2(\Omega)}\le C\|f\|_{L^2(\Omega)},
\end{align}
where $C=C(p,n,\gz)$.
 Here and below set $q_0=(1+\gamma)2^\star$.
We then conclude that there exists a function $u\in W^{1,q_0}_0(\Omega)$ such that
\begin{align}\label{xx2-1}
u^{\ez}\to u \quad{\rm in}\quad L^{2}(\Omega)
\quad{\rm and}\quad Du^{\ez}\to
Du\quad{\rm weakly\ in}\quad L^{q_0}(\Omega)
\end{align}
as $\ez\to0 $ (up to some subsequence). Moreover,
we have the following convergence result, which can be proved in a similar way
by an approach in Lemma \ref{con-gr} (ii).
\begin{lem}\label{conv-res-}

It holds that
 $Du^{\ez}\to
Du$ in $L^q(\Omega)
$ for all $q<(1+\gamma)2^\star$ and that
 \begin{align}\label{str-con-v}
|Du^{\ez}|^{\gz} Du^{\ez}\to |Du|^{\gz}Du\quad{\rm in}\quad L^{q}(\Omega)\quad
{\rm for\ all}\quad q<2^{\star}.
\end{align}
\end{lem}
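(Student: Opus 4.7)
The plan is to follow the monotonicity argument used in the proof of Lemma~\ref{con-gr}(ii), simplified by the fact that here $u^\ez$ solves the non-regularized equation. From Theorem~\ref{th-re0} and the estimates \eqref{th1-1eps0}--\eqref{th1-1eps}, the sequence $Du^\ez$ is bounded in $L^{q_0}(\Omega)$ with $q_0=(1+\gz)2^\star$, while $|Du^\ez|^\gz Du^\ez$ is bounded in $W^{1,2}(\Omega)\cap L^{2^\star}(\Omega)$. Up to a subsequence, $|Du^\ez|^\gz Du^\ez\to U$ weakly in $W^{1,2}(\Omega)$ and strongly in $L^q(\Omega)$ for every $q<2^\star$ by Rellich--Kondrachov, for some $U\in W^{1,2}(\Omega)$. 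Combined with \eqref{xx2-1}, the task reduces to identifying $U=|Du|^\gz Du$ and upgrading $Du^\ez\rightharpoonup Du$ in $L^{q_0}(\Omega)$ to strong convergence in every $L^q(\Omega)$ with $q<q_0$.

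Next I would apply the vector inequalities \eqref{in-vec2} and \eqref{in-vec4} of Lemma~\ref{in-vec} with $a=Du^\ez$ and $b=Du$ and integrate over $\Omega$. Setting $c_\gz=1$ when $-1<\gz<0$ and $c_\gz=0$ when $\gz\ge 0$, this yields
\begin{align*}
&\frac{1}{C}\int_\Omega(|Du^\ez|^2+|Du|^2+c_\gz)^{\gz/2}|Du^\ez-Du|^2\,dx\\
&\qquad\le \int_\Omega|Du|^\gz Du\cdot(Du-Du^\ez)\,dx+\int_\Omega|Du^\ez|^\gz Du^\ez\cdot(Du^\ez-Du)\,dx.
\end{align*}
The first term vanishes in the limit by $Du^\ez\rightharpoonup Du$ in $L^{q_0}(\Omega)$, since $|Du|^{\gz+1}\in L^{2^\star}(\Omega)$ and the assumption $\gz>-4/(n+2)$ forces $q_0/(q_0-1)\le 2^\star$, hence $|Du|^\gz Du\in L^{q_0/(q_0-1)}(\Omega)$. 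The same assumption gives the strict inequality $q_0/(q_0-1)<2^\star$, so $|Du^\ez|^\gz Du^\ez\to U$ strongly in $L^{q_0/(q_0-1)}(\Omega)$, and the second term also vanishes by pairing with the weak convergence $Du^\ez-Du\rightharpoonup 0$ in $L^{q_0}(\Omega)$.

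Consequently $\int_\Omega(|Du^\ez|^2+|Du|^2+c_\gz)^{\gz/2}|Du^\ez-Du|^2\,dx\to 0$, which delivers $Du^\ez\to Du$ strongly in $L^{\gz+2}(\Omega)$: when $\gz\ge 0$ this is immediate from the pointwise bound $|Du^\ez-Du|^{\gz+2}\le C(|Du^\ez|^2+|Du|^2)^{\gz/2}|Du^\ez-Du|^2$, and when $-1<\gz<0$ one uses H\"older's inequality exactly as in Lemma~\ref{con-gr}(ii), the integrability of $(|Du|^2+|Du^\ez|^2+1)^{(\gz+2)/2}$ being secured by $q_0>\gz+2$. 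Passing to a further subsequence yields $Du^\ez\to Du$ almost everywhere, and Vitali's theorem combined with the uniform $L^{q_0}$ bound upgrades this to $Du^\ez\to Du$ in $L^q(\Omega)$ for every $q<q_0$. Finally, the pointwise convergence gives $|Du^\ez|^\gz Du^\ez\to|Du|^\gz Du$ a.e., so Vitali together with the uniform $L^{2^\star}$ bound yields convergence in $L^q(\Omega)$ for every $q<2^\star$, and in particular $U=|Du|^\gz Du$. The most delicate step will be the vanishing of the second term in the monotonicity inequality, which hinges on the strict inequality $q_0/(q_0-1)<2^\star$ and thus on the hypothesis $\gz>-4/(n+2)$ (cf.\ Remark~\ref{zzzz1}).
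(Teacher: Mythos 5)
Your proposal is correct and follows essentially the same monotonicity argument the paper uses, substituting the $\epsilon$-free vector inequalities \eqref{in-vec2} and \eqref{in-vec4} into the framework of Lemma~\ref{con-gr}(ii) and relying on $q_0/(q_0-1)<2^\star$ (from $\gz>-4/(n+2)$) to pair weak convergence of $Du^\ez$ in $L^{q_0}$ against strong $L^{q_0/(q_0-1)}$ convergence of $|Du^\ez|^\gz Du^\ez$. The only cosmetic difference is that you make the final upgrades to strong $L^q$ convergence explicit via a.e.\ convergence plus Vitali with the uniform $L^{q_0}$ and $L^{2^\star}$ bounds, whereas the paper leaves those steps as ``a similar argument''; the content is the same.
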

\begin{proof}
In the case $\gz>0$, by replacing \eqref{in-vec3} with \eqref{in-vec4},
the proof of Lemma 8.1 follows from that of Lemma \ref{con-gr} (ii) line by line.

In the case $\gz<0$, by replacing \eqref{in-vec1} with \eqref{in-vec2},
 the proof of Lemma 8.1 is also similar to that of Lemma \ref{con-gr} (ii).
 We only give the difference.
 If $-1<\gz<0$, it follows from \eqref{in-vec2} that
\begin{align*}
&\frac 1{C}\int_{\Omega}(|Du^{\ez}|^2+|Du|^2+1)^{\frac \gz 2}|Du^{\ez}-Du|^{2}\,dx\\
&\le \int_\Omega (|Du^\ez|^\gz Du^\ez-|Du |^\gz Du) (Du^\ez-Du)\,dx\\
&= \left(
\int_{\Omega}|Du|^{2+\gz}\,dx-\int_{\Omega}|Du|^{\gz}Du\cdot Du^{\ez}\,dx\right)+\int_{\Omega}|Du^{\ez}|^{\gz}Du^{\ez}\cdot (Du^{\ez}-Du)\,dx.
\end{align*}
By an argument similar to that of Lemma \ref{con-gr} (ii), one has
$$\int_{\Omega}(|Du^{\ez}|^2+|Du|^2+1)^{\frac \gz 2}|Du^{\ez}-Du|^{2}\,dx\to 0$$
as $\ez\to 0$ (up to some subsequence). Since a H\"older inequality leads to
\begin{align*}
&\int_{\Omega}|Du^{\ez}-Du|^{\gz+2}\,dx\\
&\le \left(\int_{\Omega}(|Du^{\ez}|^2+|Du|^2+1)^{\frac \gz 2}|Du^{\ez}-Du|^{2}\,dx\right)^{\frac{\gz+2}2}
\left(\int_{\Omega} (|Du^{\ez}|^2+|Du|^2+1)^{\frac{\gz+2}2}\,dx\right)^{-\frac \gz 2},
\end{align*}
we know that  $ Du^\ez\to Du$ in $L^{2+\gamma}(\Omega)$.
By an argument similar to   that of Lemma \ref{con-gr} (ii), we get the desired convergence.
 \end{proof}

We are ready to prove  Theorem  \ref{th-re1}.
\begin{proof}[Proof of Theorem  \ref{th-re1}]

Thanks to Lemma \ref{conv-res-} and \eqref{th1-1eps}, we conclude that
 $|Du^\ez|^{\gz}Du^\ez\to |Du|^{\gz}Du$ in $L^2(\Omega)$ and $D(|Du^\ez|^{\gz}Du^\ez)\to D(|Du|^{\gz}Du)$
 weakly in $L^2(\Omega)$ as $\ez\to 0$.
 By such convergence we conclude \eqref{thxx} and \eqref{th1-1} follows from
   \eqref{th1-1eps0} and \eqref{th1-1eps}.

     Finally, assume that $f\in   f^0(\Omega)$ in addition.

  \medskip \noindent
{\it Case    $(1+\gamma)2^\star>n$.}  By the Morrey-Sobolev embedding,
  $u^\epsilon\in C^{\alpha}(\overline \Omega)$ uniformly in $\ez>0$, where
  $\alpha=1-\frac n{(1+\gamma)2^\star}$.
By Arezla-Ascolli' theorem, as $\ez\to0$ (up to some subsequence), $u^\epsilon  $ uniformly converges to some function $u$ in $ C^{\alpha}(\overline\Omega)$. By the stability of viscosity solution,  we know that $u$
is a viscosity solution to \eqref{plapgz}.

\medskip
\noindent {\it Case $\gamma<\frac{n-4}2$}.
   Due to Remark \ref{rem7.6},  we have
 $u^\epsilon\in C^\alpha_{\loc}( \Omega)$ uniformly in $\ez>0$ for some $ \alpha>0$.   Thus
 $u^\ez\to u$ local uniformly as $\ez\to0$. Thus $ u\in C^0(\Omega)$.
\end{proof}

Below we explain  the  reason why Theorem \ref{th-re1a} does not include the case $\gz>p-2$.
\begin{rem} \rm \label{rem7.4}
Assume that   $\gz>p-2$.
In an domain $ \Omega$ as in Theorem \ref{th-re1a},  applying \eqref{th1-2}  in Theorem \ref{th-re0} to viscosity solution $u^\epsilon$ to approximation problem,  we will get
\begin{align*}
\||Du^{\ez}|^\gamma Du^{\ez}\|_{W^{1,2}(\Omega)}\le C\|f^{\ez}\|_{L^2(\Omega)}+C\|u^{\ez}\|
^{\gz+1}_{L^2(\Omega)}.
\end{align*}
Since it is not clear whether  $\|u^\epsilon\|_{L^2(\Omega)}$  is bounded uniformly in $\ez>0$ or not,   we can not use the above proof to show \eqref{th1-2} for $u$.
\end{rem}
\section*{Data availability statement }
Data sharing not applicable to this article as no datasets were generated or analysed
during the current study.

\section*{Statements and Declarations}

On behalf of all authors, the corresponding author states that there is no conflict of interest.

\noindent Qianyun Miao,

\noindent
School of Mathematics and Statistics, Beijing Institute of Technology, Beijing 100081, P. R. China.

\noindent{\it E-mail }:  \texttt{qianyunm@bit.edu.cn}
\bigskip

\noindent Fa Peng,

\noindent
School of Mathematical Sciences, Beihang University, Beijing 102206, P. R. China

\noindent{\it E-mail }:  \texttt{fapeng@buaa.edu.cn}

\bigskip

\noindent Yuan Zhou

\noindent School of Mathematical Sciences, Beijing Normal University, Haidian District Xinejikou Waidajie No.19, Beijing 100875, P. R. China

\noindent {\it E-mail }:
\texttt{yuan.zhou@bnu.edu.cn}

\end{document}